\DeclareMathOperator*{\argmin}{arg\,min}
\theoremstyle{plain}% Theorem-like structures provided by amsthm.sty
\newtheorem{theorem}{Theorem}[section]
\newtheorem{lemma}[theorem]{Lemma}
\newtheorem{corollary}[theorem]{Corollary}
\newtheorem{proposition}[theorem]{Proposition}
\theoremstyle{definition}
\theoremstyle{remark}
\newtheorem{remark}{Remark}
\newcommand\cH{{\cal H}}
\newcommand\e{{\bf e}}
\newcommand\ov{\overline}
\newcommand\oo{\mbox{o}}
\newcommand\OO{\mbox{O}}
\newcommand\tr{\mbox{tr}}
\def\bbr{{\mathbb R}}
\def\text#1{\hbox{#1}}
\def\endproof{\mbox{\ $\qed$}}
\def\m{{\bf m}}
\def\E{{\bf E}}
\def\P{{\bf P}}
\def\C{{\bf C}}
\def\D{{\bf D}}
\def\L{{\bf L}}
\def\U{{\bf U}}
\def\V{{\bf V}}
\def\n{{\bf n}}
\def\s{{\bf s}}
\def\x{{\bf x}}
\def\u{{\bf u}}
\def\g{{\bf g}}
\def\r{{\bf r}}
\def\a{{\bf a}}
\def\b{{\bf b}}
\def\q{{\bf q}}
\def\v{{\bf v}}
\def\t{{\bf t}}
\def\Chi{{\bf 1}}
\def\d{\mathrm{d}}
\def\build #1_#2{\mathrel{\mathop{\kern 0pt #1}\limits_{#2}}} 
\newcommand{\wh}{\widehat}
\newcommand{\wt}{\widetilde}
\newcommand{\zs}[1]{{\mathchoice{#1}{#1}{\lower.25ex\hbox{$\scriptstyle#1$}}
		{\lower0.25ex\hbox{$\scriptscriptstyle#1$}}}}
\begin{document}

\articletype{ARTICLE}

\title{Truncated sequential  guaranteed estimation for the Cox-Ingersoll-Ross models}

\author{
\name{Mohamed BEN ALAYA\textsuperscript{a}, Thi-Bao Trâm NGÔ\textsuperscript{b} \thanks{CONTACT Thi-Bao Trâm NGÔ. Email: thibaotram.ngo@univ-evry.fr} and Serguei PERGAMENCHTCHIKOV\textsuperscript{a}}
\affil{\textsuperscript{a}Laboratoire de Math\'ematiques Rapha\"el Salem, 
	UMR 6085 CNRS-Universit\'e de Rouen Normandie, France; \textsuperscript{b}Laboratoire de Mathématiques et Modélisation d'Évry, CNRS, UMR
	8071, Université Évry Paris-Saclay,  France}
}

\maketitle

\begin{abstract}
The drift sequential parameter estimation problems for the Cox-Ingersoll-Ross (CIR) processes under the limited duration of observation are studied. 
Truncated sequential estimation methods 
for both scalar and {two}-dimensional parameter cases are proposed.
In the non-asymptotic setting, for the proposed truncated estimators, the properties of guaranteed mean-square
estimation accuracy are established.  
In the asymptotic formulation, when the observation time tends to infinity, it is shown that the proposed sequential procedures are asymptotically optimal among all possible sequential and non-sequential estimates with an average estimation time less than the fixed observation duration.
It also turned out that asymptotically, without degrading the estimation quality, they significantly reduce the observation duration compared to classical non-sequential maximum likelihood estimations based on a fixed observation duration.

\end{abstract}

\begin{keywords}
Cox-Ingersoll-Ross processes; 
guaranteed estimation method;
truncated sequential estimation; parameter estimation;  minimax estimation.\\
\textit{2010 MSC}:  Primary 44A10; 60F05; Secondary 62F12; 65C05.
\end{keywords}

	\section{Introduction}	\label{sec:Intr}

\subsection{Motivations}\label{sec:Intr-11}

In this paper, based on the sequential analysis approach, we develop 
a new  truncated estimation method based on observations within the  time interval $[0,T]$  of the  Cox-Ingersoll-Ross (CIR) process  defined 
through the following stochastic differential equation

\begin{equation}\label{sec:Intr.CIR}
	\d X_\zs{t}=(a-bX_\zs{t})\d t+\sqrt{\sigma X_\zs{t}}\d W_\zs{t},\quad X_\zs{0}=x>0, \quad  0\le t\le T\,, 
\end{equation} 
where $a>0$, $b>0$ and $\sigma>0$ are fixed parameters and $(W_\zs{t})_\zs{t\ge 0}$ is a standard Brownian motion. 
Similarly to 
\cite{BenAlayaetal2025},
we consider the sequential estimation problem for the parameters $a$ and $b$ under the condition that the diffusion coefficient $\sigma$ is known.
{It should be noted that in this case
the process \eqref{sec:Intr.CIR} is  ergodic (see, for example, in \cite{BAK2010}, for the details)
and
has the ergodic density which defined as}
\begin{equation}
	\label{ergdenssityCIR}
	\q_\zs{a,b}(z)=
	\frac{\beta^{\alpha}}{\Gamma(\alpha)}\,z^{\alpha-1}\,e^{- \beta z}\,\Chi_\zs{\{z\ge 0\}}\,,
\end{equation}
where $\Gamma(z)=\int_{0}^{+\infty}t^{z-1}e^{-t}\d t$,
$\alpha=2 a/\sigma$ and  $\beta=2 b/\sigma$.

{The CIR model} is very popular in many important applications such as interest rate modeling
(\cite{CoxIngersollRoss1985,LamLap97}), stochastic volatility stock markets \cite{Heston1993, BerdjanePergamenshchikov2013, NguyenPergamenshchikov2017})
and, moreover, in \cite{PergamenchtchikovTartakovskySpivak2022}
 discrete versions of CIR processes are used
 in the epidemic analysis. To obtain reliable statistical inferences within this model, estimating the unknown parameters with guaranteed accuracy properties is necessary in the non-asymptotic setting. It should be noted that  for the model of type  \eqref{sec:Intr.CIR} the usual maximum likelihood estimators are  nonlinear functions of observations and it is not clear how to study such functions directly on the fixed time interval.
   For these reasons to overcome these dificulties
  in \cite{BenAlayaetal2025, NovikovShiryaevKordzakhia2024}, sequential guaranteed estimation methods were developed to estimate the parameters of the model  \eqref{sec:Intr.CIR} with fixed estimation accuracy. 
Unfortunately, the proposed sequential procedures do not control the observation duration, which restricts their applications in many practical applications since, in practice, the duration of observation is usually bounded. For example, for the portfolio optimisation problems for financial markets with unknown parameters
in \cite{BerdjanePergamenshchikov2015}, it is shown that to construct optimal and robust financial strategies, it is necessary to use guaranteed truncated sequential estimators that can provide a fixed known estimation accuracy over fixed time intervals. For statistical models  in discrete time truncated sequential procedures are developed in
\cite{KonevPergamenshchikov1990} and for 
 the stochastic differential equations  with the bounded diffusion coefficients
such procedures were proposed  in
\cite{KonevPergamenshchikov1992,GaltchoukPergamenshchikov2011,GaltchoukPergamenshchikov2015, GaltchoukPergamenshchikov2022}
for parametric and nonparametric  problems.
 Unfortunately, these results can not be applied  to  stochastic differential equations  with unbounded  diffusion coefficients 
as, for example,  \eqref{sec:Intr.CIR}.  To study non-asymptotic estimation methods for such models, one needs to develop  new analytic tools based on the special form of this process. The main goal of this paper is  to develop  truncated guaranteed estimation methods 
 for the coefficients $a$ and $b$, in the both scalar and {two}-dimensional parameter cases,
  on the basis of the observations $(X_\zs{t})_\zs{0\le t\le T}$ of the process \eqref{sec:Intr.CIR}, 
 where the observation duration $T>0$ is fixed in advance.

\subsection{Main contributions}
In this paper, for the first time, the truncated sequential guaranteed methods were developed for the models \eqref{sec:Intr.CIR} 
with unbounded diffusion coefficients. The proposed estimators for the parameters $a$ and $b$  have a guaranteed non-asymptotic mean square estimation accuracy, which is found in the explicit form.  Moreover, 
 through the asymptotic analysis methods developed in  \cite{BenAlayaetal2025}
  it is shown that the proposed truncated guaranteed procedures are optimal in the minimax sense
  for local  and general quadratic risks. For the local risks, the optimality properties are established in the class of all sequential and non sequential estimators with the  
  observation duration less than $T$ when $T\to \infty$.  
   It is important to emphasize here that it is also established that the proposed truncated procedures asymptotically, without deteriorating the estimation quality, significantly reduce the observation period compared to usual non-sequential estimations.
 Moreover, for the general quadratic risk, the optimality properties are established in the class of all sequential procedures with mean observation time 
not exceeding the mean observation time of the proposed truncated procedures. 
 It should be noted here that this class is sufficiently large since it includes all possible sequential procedures that can use more than $T$ observations duration; only the mean observation duration has to be less than the mean observation duration of the proposed truncated procedures. 
 This means that any sequential procedure having the same mean observation duration can not improve the accuracy properties with respect to the proposed one.

\subsection{Organisation of the paper}
The rest of the paper is organized as follows. In Section \ref{sec:SqPrs}, we analyze the scalar truncated sequential estimation methods for the model \eqref{sec:Intr.CIR}.  In Section \ref{sec:MultPr}, we develop the two-step sequential estimation method
for the parameter vector $\theta=(a,b)^{\top}$ in the model \eqref{sec:Intr.CIR}. In Section \ref{sec:AsOpt},
we find conditions on the parameters of the process \eqref{sec:Intr.CIR} which provide the optimality properties in minimax sense for the proposed sequential procedures.  Section \ref{sec:CoIn} presents the
concentration inequalities for the CIR process. Some important conclusions are given in Section \ref{sec:conclusion}.
{Appendix \ref{sec:Appendix} contains some useful properties of the CIR process \eqref{sec:Intr.CIR} and some auxiliary lemmas recalled from \cite{BenAlayaetal2025}.}

\medskip

\section{Scalar truncated sequential procedures} \label{sec:SqPrs}

\subsection{Guaranteed estimation}
\noindent
First, we consider the estimation problem for the parameter $b$ in the process \eqref{sec:Intr.CIR}  in the case, when $a$ is known, i.e. $\theta=b$. 
In this case  $\E_\zs{\theta}$ is the expectation with respect to the distribution $\P_\zs{\theta}$ of the process \eqref{sec:Intr.CIR} with 
a fixed parameter $a$ and 
$b=\theta$. In this case
the Maximum Likelihood Estimator (MLE) for $\theta$ (see, for example, in \cite{BAK2013}) is
the non-linear function of the observations
defined as
\begin{equation}\label{sec:SqPrs-1-MLE}
	\wh{\theta}_\zs{T}=\frac{aT-X_\zs{T}+x}{\int^{T}_\zs{0}\, X_\zs{s}\d s}	
	\,.
\end{equation}
\noindent 
To study the estimation problem 
in a non-asymptoitical setting in the paper \cite{BenAlayaetal2025} it is proposed  the sequential 
estimation
procedure $\delta_\zs{H}=(\tau_\zs{H},\wh{\theta}_\zs{\tau_\zs{H}})$
defined
as 
{\begin{equation}\label{eq:SMLE-bbEs-1}
	\tau_\zs{H}=\inf\left\{t>0\,:\,\int_\zs{0}^tX_\zs{s}\d s \ge H\right\}\textrm{ and }\wh{\theta}_\zs{\tau_\zs{H}}=\frac{a\tau_\zs{H}-X_{\tau_\zs{H}}+x}{H}\,,
\end{equation}}
%\noindent 
%and 
%$$
%\wh{\theta}_\zs{\tau_\zs{H}}			
%=\frac{a\tau_\zs{H}-X_{\tau_\zs{H}}+x}{H}\,,
%$$
\noindent where $H>0$ is a fixed threshold. To ensure that the duration estimate does not exceed the fixed period of time $T$
we define the truncated sequential procedure 
\begin{equation}\label{eq:TrunEst--Prxcb}
	\wt{\delta}_\zs{H,T}=(\wt{\tau}_\zs{H,T},\wt{\theta}_\zs{H,T})\,,
\end{equation}
\noindent in which  the alternative stopping time $\wt{\tau}_\zs{H,T}$ and the corresponding
sequential
estimator 
$\wt{\theta}_\zs{H,T}$ are defined as 
\begin{equation}\label{eq:TrunEst-b}
	\wt{\tau}_\zs{H,T}=\tau_\zs{H}\wedge T\quad\textrm{and}\quad
	\wt{\theta}_\zs{H,T}
	=\wh{\theta}_\zs{\tau_\zs{H}}
	\,
	\Chi_{\{\tau_\zs{H}\le T\}},
\end{equation}
\noindent where $x\wedge y=\min(x,y)$ and  the notation $\Chi_\zs{A}$ states for the indicator of the set $A$. 
Now,
for any compact  $\Theta\subset]0,+\infty[$ we denote
\begin{equation}
	\label{sec:Para-1}
	\a_\zs{*}
	=
	\frac{a}{\b_\zs{max}}
	\,,\quad
	\b_\zs{min}
	=\min_\zs{\theta\in \Theta} \theta
	\quad\mbox{and}\quad
	\b_\zs{max}
	=\max_\zs{\theta\in \Theta} \theta
	\,.
\end{equation}

\noindent Moreover, we need the following threshold
\begin{equation}
	\label{sec:LL-m-1}
	\L_\zs{m}
	=3^{2m-1}
	\Big(
	2 \x_\zs{2m}
	+
	\sigma^{m}
	\big(m(2m-1)\big)^{m}\,
	\x_\zs{m}
	\Big)\,.
\end{equation}
\noindent
where
the parameters $\x_\zs{q}$ are defined in \eqref{MomentCIR-2}.
\begin{theorem}\label{Th.thm:b-1}
	For any  $T\ge 1$,  $0<H<\a_\zs{*} T$ and  integer $m\ge 2$ the sequential procedure \eqref{eq:TrunEst-b} 
	possesses  the following non-asymptotic mean square estimation accuracy
	\begin{equation}
		\label{ineq:2}
		\sup_{\theta\in\Theta}\E_\zs{\theta}
		\big(	\wt{\theta}_\zs{H,T} -\theta\big)^2
		\le \frac{\sigma}{H}+\frac{T^m\U_\zs{m}}{(\a_\zs{*}T-H)^{2m}}
		:=
		\e_\zs{m}(H,T)		
		\,,
	\end{equation}
	where  $\U_\zs{m}=\L_\zs{m} \b^{2}_\zs{max}/\b^{2m}_\zs{min}$.
\end{theorem}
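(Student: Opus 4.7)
My plan is to decompose the mean-square error according to whether the stopping rule \eqref{eq:SMLE-bbEs-1} fires before the truncation horizon $T$ or not. Since the two events $\{\tau_\zs{H}\le T\}$ and $\{\tau_\zs{H}>T\}$ are disjoint and the truncated estimator vanishes on the latter by \eqref{eq:TrunEst-b}, I would split
\[
\E_\zs{\theta}\big(\wt{\theta}_\zs{H,T}-\theta\big)^2
= \E_\zs{\theta}\big[(\wh{\theta}_\zs{\tau_\zs{H}}-\theta)^2 \Chi_\zs{\{\tau_\zs{H}\le T\}}\big] + \theta^2\, \P_\zs{\theta}(\tau_\zs{H}>T),
\]
so that the two summands on the right of \eqref{ineq:2} are treated independently.

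For the first summand I would integrate the SDE \eqref{sec:Intr.CIR} up to $\tau_\zs{H}$ to obtain $X_\zs{\tau_\zs{H}}-x = a\tau_\zs{H} - \theta H + M_\zs{\tau_\zs{H}}$, where $M_\zs{t}=\int_\zs{0}^{t}\sqrt{\sigma X_\zs{s}}\,\d W_\zs{s}$ is a continuous local martingale with $\langle M\rangle_\zs{t}=\sigma\int_\zs{0}^{t}X_\zs{s}\,\d s$. This yields the exact error identity $\wh{\theta}_\zs{\tau_\zs{H}}-\theta=-M_\zs{\tau_\zs{H}}/H$, and on $\{\tau_\zs{H}\le T\}$ the random variable $M_\zs{\tau_\zs{H}}$ coincides with $M_\zs{\tau_\zs{H}\wedge T}$. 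Because $\tau_\zs{H}\wedge T$ is a bounded stopping time and $\langle M\rangle_\zs{\tau_\zs{H}\wedge T}\le \sigma H$ by the very definition of $\tau_\zs{H}$, the It\^o isometry (optional stopping applied to the square of $M$) gives $\E_\zs{\theta}M_\zs{\tau_\zs{H}\wedge T}^{2}\le \sigma H$, so the first summand is bounded by $\sigma/H$.

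The main work lies in the second summand. I would rewrite $\{\tau_\zs{H}>T\}=\{\int_\zs{0}^{T}X_\zs{s}\,\d s<H\}$ and, using the same integrated SDE over $[0,T]$, substitute $\theta\int_\zs{0}^{T}X_\zs{s}\,\d s = aT+x-X_\zs{T}+M_\zs{T}$ to reformulate this event as $\{X_\zs{T}-x-M_\zs{T}>aT-\theta H\}$. Under the hypothesis $H<\a_\zs{*}T$ and $\theta\le \b_\zs{max}$ one has $aT-\theta H\ge \theta(\a_\zs{*}T-H)>0$, so Markov's inequality of order $2m$ gives
\[
\P_\zs{\theta}(\tau_\zs{H}>T) \le \frac{\E_\zs{\theta}|X_\zs{T}-x-M_\zs{T}|^{2m}}{\theta^{2m}(\a_\zs{*}T-H)^{2m}}.
\]
To estimate the numerator I would apply the power-mean inequality $|A+B+C|^{2m}\le 3^{2m-1}(|A|^{2m}+|B|^{2m}+|C|^{2m})$, control $x^{2m}$ and $\E_\zs{\theta}X_\zs{T}^{2m}$ by $\x_\zs{2m}$ via the uniform CIR moment estimates recalled in the appendix, and bound the martingale moment by the Burkholder--Davis--Gundy-type inequality $\E_\zs{\theta}M_\zs{T}^{2m}\le (m(2m-1))^m\,\E_\zs{\theta}\langle M\rangle_\zs{T}^m$ combined with H\"older $(\int_\zs{0}^{T}X_\zs{s}\,\d s)^m\le T^{m-1}\int_\zs{0}^{T}X_\zs{s}^{m}\,\d s$ to obtain $\E_\zs{\theta}M_\zs{T}^{2m}\le \sigma^m (m(2m-1))^m T^m \x_\zs{m}$. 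Using the standing assumption $T\ge 1$ to absorb the constant contribution $2\x_\zs{2m}$ into a $T^m$ factor collapses the moment bound to $T^m\L_\zs{m}/\theta^{2m}$, and multiplying by $\theta^2$ and taking the worst case $\theta^2/\theta^{2m}\le \b^{2}_\zs{max}/\b^{2m}_\zs{min}$ over $\Theta$ produces the advertised term $T^m\U_\zs{m}/(\a_\zs{*}T-H)^{2m}$. The real obstacle is this final moment computation and the careful use of $T\ge 1$ to homogenise all terms into a single $T^m$; the decomposition and the It\^o-isometry bound on $\{\tau_\zs{H}\le T\}$ are essentially immediate from the martingale structure of $\wh{\theta}_\zs{\tau_\zs{H}}$.
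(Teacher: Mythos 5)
Your proposal is correct and follows essentially the same route as the paper: the same decomposition of the risk over $\{\tau_\zs{H}\le T\}$ and its complement, Chebyshev's inequality of order $2m$ for the tail $\P_\zs{\theta}(\tau_\zs{H}>T)$, and the same moment computation (the $3^{2m-1}$ power-mean split, the uniform CIR moment bounds $\x_\zs{q}$, the stochastic-integral bound with constant $(m(2m-1))^{m}$, and the use of $T\ge 1$ to homogenise everything into $T^{m}\L_\zs{m}$). The only difference is presentational: where the paper cites Theorem 1 of \cite{BenAlayaetal2025} for the $\sigma/H$ bound and its own concentration inequality \eqref{sec:CoIn-D-T-2-1} for $\E_\zs{\theta}\D_\zs{T}^{2m}$, you reprove both inline with identical arguments --- your quantity $X_\zs{T}-x-M_\zs{T}$ is exactly $-\theta\,\D_\zs{T}$ in the representation \eqref{Rep-D-TT--01210}.
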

\begin{proof}
	\noindent  Indeed,
	followed by \cite[Theorem 1]{BenAlayaetal2025}, we have
	\begin{align}
		\E_\zs{\theta}\big(\wt{\theta}_\zs{H,T}-\theta\big)^2&\le \E_\zs{\theta}(	\wh{\theta}_\zs{\tau_\zs{H}}
		-\theta)^2\Chi_{\{\tau_\zs{H}\le T\}}+\theta^2
		\P_\theta(\tau_\zs{H}> T)\nonumber\\[2mm]
		&\le \frac{\sigma}{H}+\theta^2\P_\theta(\tau_\zs{H}> T).\label{ineq:varbeta}
	\end{align}
	\noindent From this, we note that for $0<H<\a_\zs{*} T$
	\begin{equation}
		\label{ineq:prob-tau}
		\P_\zs{\theta} (\tau_\zs{H}>T)=
		\P_\zs{\theta}\left(\int^{T}_\zs{0} X_\zs{s}\d s<H\right)
%=\P_\zs{\theta}\left(-\D_\zs{T}> \frac{a}{\theta} T -H\right)
		\le 	
		\P_\zs{\theta}\left(|\D_\zs{T}|> \a_\zs{*} T -H\right)
		\,,
	\end{equation}
	\noindent
	where  $\D_\zs{T}
		=
		\int_\zs{0}^T
		\left(
		X_\zs{s}- a/\theta
		\right) \d s$.
	Then, from \eqref{ineq:prob-tau} and  the concentration inequality \eqref{sec:CoIn-D-T-2-1}
	for  
	$0<H< \a_\zs{*} T$, we get
	\begin{equation}
		\label{sec:PP-tau-Up}	
		\sup_\zs{\theta\in\Theta}\,
		\P_\zs{\theta} \big(\tau_\zs{H}>T\big)
		\le \frac{\sup_\zs{\theta\in\Theta}\,\E_\zs{\theta} \D_\zs{T}^{2m}}{(\a_\zs{*}T-H)^{2m}}\le
		\frac{T^m}{(\a_\zs{*}T-H)^{2m}}
		\frac{\L_\zs{m}}{\b^{2m}_\zs{min}}\,,
	\end{equation}
	\noindent where $\L_\zs{m}$ is defined in \eqref{sec:LL-m-1}.
	Therefore, using this in \eqref{ineq:varbeta} the estimation accuracy can be estimated as
	\begin{align*}
		\sup_{\theta\in\Theta}\E_\zs{\theta}(	\wt{\theta}_\zs{H,T}-\theta)^2&\le
		\frac{\sigma}{H}+\sup_{\theta\in\Theta}\theta^2
		\frac{ T^m}{(\a_\zs{*}T-H)^{2m}}
		\frac{\L_\zs{m}}{\b^{2m}_\zs{min}}\,.
	\end{align*}
	\noindent 
	This implies directly the bound \eqref{ineq:2}.
\end{proof}

\noindent Now one needs to choose an optimal value for the parameter $H$ 
to minimise the estimation accuracy \eqref{ineq:2}, i.e.
\begin{equation}
	\label{sec;Opt-H}
	H^{*}_\zs{T}=\argmin_\zs{0<H<\a_\zs{*}T} \e_\zs{m}(H,T)\,.
\end{equation}
\noindent In this case we define  the procedure
\begin{equation}
	\label{sec;Opt-H-Seq-Pr}
	\big(\tau^{*}_\zs{T},\theta^{*}_\zs{T} \big)\,,\quad
	\tau^{*}_\zs{T}=\wt{\tau}_\zs{H^{*}_\zs{T},T} 
	\quad\mbox{and}\quad
	\theta^{*}_\zs{T}=
	\wt{\theta}_\zs{H^{*}_\zs{T},T}
	\,.
\end{equation}

\begin{corollary}\label{cor:b}
	For  any integer $m>1$ the optimal truncated procedure  \eqref{sec;Opt-H-Seq-Pr} posses the following asymptotic 
	properties:.
	
	\begin{enumerate}
		
		\item 
		the optimal parameter
		\eqref{sec;Opt-H} is represented as
		\begin{equation}\label{H1}
			H^{*}_\zs{T}=\a_\zs{*}T-(2m\U_\zs{m}{\a_\zs{*}^2}/\sigma)^{\frac{1}{2m+1}}T^{\frac{2+m}{2m+1}}(1+\mathrm{o}(1))
			\quad\mbox{as}\quad T\to\infty \,;
		\end{equation} 
		\item
		\noindent 
		the corresponding optimal estimation accuracy 
		for any $m>1$
		has the following form 
		\begin{equation}\label{accuracy-H2}
			\sup_{\theta\in\Theta}\E_\zs{\theta}\big( \theta^{*}_\zs{T}-\theta\big)^2\le
			\e_\zs{m}(H^{*}_\zs{T},T)=
			\frac{\sigma}{\a_\zs{*}T}+\mathrm{O}\left(\frac{1}{T^{\frac{3m}{2m+1}}}\right)
			\quad\mbox{as}\quad T\to\infty \,.
		\end{equation}
	\end{enumerate}
\end{corollary}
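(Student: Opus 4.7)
The plan is to optimize the explicit bound $\e_\zs{m}(H,T) = \sigma/H + T^m\U_\zs{m}/(\a_\zs{*}T-H)^{2m}$ from Theorem~\ref{Th.thm:b-1} directly by calculus. Since $\e_\zs{m}(\cdot,T)$ is smooth, strictly convex on $(0,\a_\zs{*}T)$ (as a sum of a decreasing convex function and an increasing convex function that blows up at $\a_\zs{*}T$), and tends to $+\infty$ at both endpoints, the minimizer $H^{*}_\zs{T}$ exists, is unique, and is characterized by $\partial_\zs{H}\e_\zs{m}(H^{*}_\zs{T},T)=0$, i.e.
\begin{equation}\label{eq:FOC}
\frac{\sigma}{(H^{*}_\zs{T})^{2}}
\,=\,
\frac{2m\,T^m\U_\zs{m}}{(\a_\zs{*}T-H^{*}_\zs{T})^{2m+1}}\,.
\end{equation}

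Next I would introduce the small-gap parameter $\Delta_\zs{T}:=\a_\zs{*}T-H^{*}_\zs{T}\in(0,\a_\zs{*}T)$ and rewrite \eqref{eq:FOC} as
$\Delta_\zs{T}^{2m+1}=(2m\U_\zs{m}/\sigma)\,T^m(\a_\zs{*}T-\Delta_\zs{T})^{2}$. A quick a priori argument (for instance, testing $\e_\zs{m}$ at the explicit point $H=\a_\zs{*}T-T^{(m+2)/(2m+1)}$) shows that $\Delta_\zs{T}/T\to 0$ as $T\to\infty$, since otherwise the left-hand side would grow like $T^{2m+1}$ while the right-hand side grows only like $T^{m+2}$. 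Hence $(\a_\zs{*}T-\Delta_\zs{T})^{2}=\a_\zs{*}^{2}T^{2}(1+\oo(1))$, and solving for $\Delta_\zs{T}$ yields
\[
\Delta_\zs{T}\,=\,\bigl(2m\U_\zs{m}\a_\zs{*}^{2}/\sigma\bigr)^{1/(2m+1)}\,T^{(m+2)/(2m+1)}\bigl(1+\oo(1)\bigr),
\]
which is exactly the expansion \eqref{H1}.

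Finally I would substitute this asymptotics back into $\e_\zs{m}(H^{*}_\zs{T},T)$. For the first term, $\sigma/H^{*}_\zs{T}=\sigma/(\a_\zs{*}T)+\OO(\Delta_\zs{T}/T^{2})$, and since $(m+2)/(2m+1)-2=-3m/(2m+1)$, the remainder is $\OO(T^{-3m/(2m+1)})$. For the second term, $T^m\U_\zs{m}/\Delta_\zs{T}^{2m}$, a direct exponent count gives $m-2m(m+2)/(2m+1)=-3m/(2m+1)$, so this term is also of order $T^{-3m/(2m+1)}$. Adding both contributions yields \eqref{accuracy-H2}.

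The only genuinely delicate step is the a priori control $\Delta_\zs{T}/T\to 0$, which is needed to linearize the factor $(\a_\zs{*}T-\Delta_\zs{T})^{2}$ in the first-order condition; everything else reduces to matching exponents. I would handle it by the comparison argument sketched above, using the convexity of $\e_\zs{m}(\cdot,T)$ to rule out minimizers far from $\a_\zs{*}T$.
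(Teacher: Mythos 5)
Your proposal is correct and follows essentially the same route as the paper: both derive the first-order condition $\partial_\zs{H}\e_\zs{m}(H,T)=0$, turn it into a fixed-point/implicit equation for $H^{*}_\zs{T}$, establish that the gap $\a_\zs{*}T-H^{*}_\zs{T}$ is of order $T^{(m+2)/(2m+1)}=\oo(T)$ (the paper via a two-sided sandwich using $H^{*}_\zs{T}<\a_\zs{*}T$, you via the comparison-point argument), and then match exponents to get both displayed asymptotics. Your explicit convexity remark justifying that the critical point is the unique minimizer is a small extra rigor the paper leaves implicit, but it does not change the substance of the argument.
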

\begin{proof}
	\noindent First note that to calculate  the parameter \eqref{sec;Opt-H} one needs to 
	study the  equation
	$$
	\frac{\partial }{\partial H}\,
	\e_\zs{m}(H,T)
	=0
	\,.
	$$ 
	\noindent 
	Using the form of the function $\e_\zs{m}(H,T)$
	defined
	in
	\eqref{ineq:2}, the root of this equation can be represented as
	\begin{equation}\label{eq:H1--1}
		H^{*}_\zs{T}=\a_\zs{*}T-(2m\U_\zs{m}/\sigma)^{\frac{1}{2m+1}}\,
		(H^{*}_\zs{T})^{\frac{2}{2m+1}}
		{T^{\frac{m}{2m+1}}}.
	\end{equation}
	\noindent Taking into account here that $H^{*}_\zs{T}<\a_\zs{*}T$ the parameter $H^{*}_\zs{T}$ 
	can be estimated from below as
	\begin{equation}\label{eq:H1-1}
		H^{*}_\zs{T}>\a_\zs{*}T-(2m\U_\zs{m}{\a_\zs{*}^2}/\sigma)^{\frac{1}{2m+1}}T^{\frac{2+m}{2m+1}}
		= \a_\zs{*}T
		\left(
		1-(2m\U_\zs{m}{\a_\zs{*}^{1-2m}}/\sigma)^{\frac{1}{2m+1}}T^{-\frac{m-1}{2m+1}}  
		\right) \,.
	\end{equation}
	\noindent
	Moreover,  using this bound  in \eqref{eq:H1--1} 
	the parameter $H^{*}_\zs{T}$  can be estimated from above as
	\begin{equation}\label{eq:H1-2}
		H^{*}_\zs{T}<\a_\zs{*}T-
		(2m\U_\zs{m}{\a_\zs{*}^2}/\sigma)^{\frac{1}{2m+1}}\,
		T^{\frac{2+m}{2m+1}}
		\left(1-(2m\U_\zs{m}{\a_\zs{*}^{1-2m}}/\sigma)^{\frac{1}{2m+1}}T^{-\frac{m-1}{2m+1}}\right)^{\frac{2}{2m+1}}\,
		\,.
	\end{equation}
	\noindent
	Taking into account that
	for any $m\ge 2$ the fraction
	${(2+m)}/{(2m+1)}<1$  
	and using the bounds
	\eqref{eq:H1-1} and \eqref{eq:H1-2} we can deduce that for sufficiently large $T$
	$$
	H^{*}_\zs{T}=\a_\zs{*}T-(2m\U_\zs{m}{\a_\zs{*}^2}/\sigma)^{\frac{1}{2m+1}}T^{\frac{2+m}{2m+1}}(1+\mathrm{o}(1)
	)\,.
	$$ 
	\noindent
	Therefore, 
	using this form in
	the bound
	\eqref{ineq:2}, we the representation \eqref{accuracy-H2}.
\end{proof}

\begin{remark} 
	\label{Re.sec:non-as--1-00}
	It should be noted (see, for example, in \cite{BenAlayaetal2025} that in this case the Fisher information  is represented  as
	\begin{equation}
		\label{sec:Fisher-Inf-1}
		I_\zs{a}(\theta)=\frac{a}{\sigma \theta}
		\quad\mbox{and}\quad
		I_\zs{a,*}=
		\min_\zs{\theta\in\Theta}\,I_\zs{a}(\theta)
		=I_\zs{a}(\b_\zs{max})=\frac{\a_\zs{*}}{\sigma}
		=\frac{a}{\sigma \b_\zs{max}}
		\,.
	\end{equation}
	\noindent and, therefore, the bound  \eqref{accuracy-H2} can be  represented as
	\begin{equation}\label{accuracy-H2-II}
		\sup_{\theta\in\Theta}\E_\zs{\theta}\big( \theta^{*}_\zs{T}-\theta\big)^2\le
		\frac{1}{I_\zs{a,*} T}+
		\mathrm{O}\left(\frac{1}{T^{\frac{3m}{2m+1}}}\right)
		\quad\mbox{as}\quad T\to\infty \,.
	\end{equation}

\end{remark}

\noindent
Now, we consider the estimation problem for the parameter $a$ in \eqref{sec:Intr.CIR} when the coefficient $b$ is known, i.e. $\theta=a$.
In this case the Maximum Likelihood estimator is given as
\begin{equation}\label{sec:SqPrs-1-MLE-a-2}
	\wh{\theta}_\zs{T}=\frac{b T+\int^{T}_\zs{0}\,X^{-1}_\zs{t}\d X_\zs{t}}{\int^{T}_\zs{0}\, X^{-1}_\zs{t}\d t}	
	\,.
\end{equation}

\noindent  Similarly to 
\eqref{eq:SMLE-bbEs-1} we define
the  sequential estimation  procedure $\delta_\zs{H}=(\tau_\zs{H}\,,\,\wh{\theta}_\zs{\tau_\zs{H}})$ with $H>0$ for the parameter $\theta$ as
\begin{equation}\label{eq:SMLE-a}
	\tau_\zs{H}=\inf{\left(t:\int_\zs{0}^{t}\,X^{-1}_\zs{s}\d s \ge H\right)}
	\quad\mbox{and}\quad
	\wh{\theta}_\zs{\tau_\zs{H}}=\frac{b\tau_\zs{H}+ \int_\zs{0}^{\tau_\zs{H}}\,X^{-1}_\zs{s}\d X_\zs{s} }{H}\,.
\end{equation}
\noindent An extension from this result is that we can define the following truncated sequential procedure $\wt{\delta}_\zs{T}=(\wt{\tau}_\zs{H,T},\wt{\theta}_\zs{H,T})$ where the stopping time $\wt{\tau}_\zs{H,T}$ and the associated estimator $\wt{\theta}_\zs{H,T}$ are defined by 
\begin{equation}\label{eq:TrunEst-a}
	\wt{\tau}_\zs{H,T}=\tau_\zs{H}\wedge T\quad\textrm{and}\quad\wt{\theta}_\zs{H,T}=\wh{\theta}_\zs{\tau_\zs{H}}\ \Chi_{\{\tau_\zs{H}\le T\}}.
\end{equation}
\noindent 
We need the following integral
\begin{align}\label{m2}
	\mu_\zs{a,\theta}
	=
	\int_\zs{\bbr_\zs{+}}\,\varphi(z)\,
	\q_\zs{\theta,b}(z)\d z
	\,,\quad 
	\varphi(x)=\min(x^{-1},\r)
\end{align}
\noindent  
where 
the density $\q_\zs{\theta,b}$ is defined in
\eqref{ergdenssityCIR} and $\r\ge 1$ is some threshold which will be specified later. 
For any compact {$\Theta\subset (\sigma/2,+\infty)$} we set
\begin{equation}
	\label{sec:Para-1-mu-2}
	\mu_\zs{a,*}
	=\inf_{\theta\in\Theta}\,
	\mu_\zs{a,\theta}\,,
	\quad
	\a_\zs{min}=\min_\zs{\theta\in \Theta} \theta
	\quad\mbox{and}\quad
	\a_\zs{max}=\max_\zs{\theta\in \Theta} \theta
	\,.
\end{equation}

\noindent  Similarly to Theorem~\ref{Th.thm:b-1}, we study the non-asymptotic properties of the procedure \eqref{eq:TrunEst-a}.

\begin{theorem}\label{Th.thm:est-a-1}
	For any  $T>0$, $0<H<\mu_\zs{a,*}T$  and compact {$\Theta\subset (\sigma/2,+\infty)$}
	the sequential procedure \eqref{eq:TrunEst-a}  for any $m\ge 2$
	possesses  the following non-asymptotic mean square accuracy
	\begin{equation}
		\label{ineq:1}
		\sup_{\theta\in\Theta}\E_\zs{\theta}(	\wt{\theta}_\zs{H,T}-\theta)^2\le \frac{\sigma}{H}+\frac{T^m \r^{2m} \V_\zs{m}}{(\mu_\zs{a,*}T-H)^{2m}}
		:=\e_\zs{m}(H,T)		
		\,,
	\end{equation}
	\noindent where 
	$$
	\V_\zs{m}=\frac{\a_\zs{max}^2 \L_\zs{m}}{\sigma^{2m}}\left(\frac{4\,e^{\beta }}{\alpha_\zs{min}}+\frac{2^{\alpha_\zs{max}}\Gamma_\zs{max}}{ \beta^{\alpha_\zs{min}}\wedge\beta^{\alpha_\zs{max}}}+\,
	\frac{{2^{\alpha_\zs{max}}}\,}{ \beta}\right)^{2m}\,,
$$
	\noindent  in which $\alpha_\zs{min}=2\a_\zs{min}/\sigma$, $\alpha_\zs{max}=2\a_\zs{max}/\sigma$,  $\Gamma_\zs{max}=\max_\zs{\alpha_\zs{min}\le \alpha\le \alpha_\zs{max}}\,
	\Gamma(\alpha)$ and $\L_\zs{m}$ is defined in \eqref{sec:LL-m-1}.
\end{theorem}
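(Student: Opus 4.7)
The plan is to mirror the proof of Theorem~\ref{Th.thm:b-1}, adapting the concentration-based control of $\P_\zs{\theta}(\tau_\zs{H}>T)$ to the fact that the stopping rule now integrates the unbounded function $X_\zs{s}^{-1}$ rather than $X_\zs{s}$. First, exactly as in \eqref{ineq:varbeta}, I would decompose
\begin{equation*}
\E_\zs{\theta}(\wt{\theta}_\zs{H,T}-\theta)^{2}\le \E_\zs{\theta}(\wh{\theta}_\zs{\tau_\zs{H}}-\theta)^{2}\Chi_\zs{\{\tau_\zs{H}\le T\}}+\theta^{2}\P_\zs{\theta}(\tau_\zs{H}>T)\le \frac{\sigma}{H}+\theta^{2}\P_\zs{\theta}(\tau_\zs{H}>T),
\end{equation*}
the bound $\sigma/H$ on the first term being the standard mean-square guarantee for the purely sequential estimator $\delta_\zs{H}$ of \eqref{eq:SMLE-a}, derived as in \cite{BenAlayaetal2025}. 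The entire problem reduces to controlling $\P_\zs{\theta}(\tau_\zs{H}>T)=\P_\zs{\theta}\bigl(\int_\zs{0}^{T}X_\zs{s}^{-1}\d s<H\bigr)$.

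Since $X_\zs{s}^{-1}$ is unbounded near zero, the direct route used in Theorem~\ref{Th.thm:b-1} is unavailable and one must pass through the truncation $\varphi(x)=\min(x^{-1},\r)$ of \eqref{m2}. As $\varphi(x)\le x^{-1}$ pointwise,
\begin{equation*}
\Big\{\int_\zs{0}^{T}X_\zs{s}^{-1}\d s<H\Big\}\subset \Big\{\int_\zs{0}^{T}\varphi(X_\zs{s})\d s<H\Big\}.
\end{equation*}
Setting $\Phi_\zs{T}=\int_\zs{0}^{T}(\varphi(X_\zs{s})-\mu_\zs{a,\theta})\d s$ and using $\mu_\zs{a,\theta}\ge\mu_\zs{a,*}$ together with the hypothesis $H<\mu_\zs{a,*}T$, the latter event is contained in $\{|\Phi_\zs{T}|>\mu_\zs{a,*}T-H\}$, so Markov's inequality yields $\P_\zs{\theta}(\tau_\zs{H}>T)\le \E_\zs{\theta}\Phi_\zs{T}^{2m}/(\mu_\zs{a,*}T-H)^{2m}$.

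The remaining task is a moment bound of the form $\E_\zs{\theta}\Phi_\zs{T}^{2m}\le T^{m}\r^{2m}\V_\zs{m}/\a_\zs{max}^{2}$ uniform over $\theta\in\Theta$. I would deploy the concentration inequality of Section~\ref{sec:CoIn}, applied now to the centered bounded function $\varphi(\cdot)-\mu_\zs{a,\theta}$ in place of the identity centered by $a/\theta$; the uniform bound $\varphi\le\r$ is precisely what injects the factor $\r^{2m}$ in \eqref{ineq:1}, while the intricate constant in the definition of $\V_\zs{m}$ --- the terms $4e^{\beta}/\alpha_\zs{min}$, $2^{\alpha_\zs{max}}\Gamma_\zs{max}/(\beta^{\alpha_\zs{min}}\wedge\beta^{\alpha_\zs{max}})$ and $2^{\alpha_\zs{max}}/\beta$ --- should arise from explicit upper bounds on $\mu_\zs{a,\theta}$ and on closely related integrals against the ergodic density \eqref{ergdenssityCIR} over $\alpha\in[\alpha_\zs{min},\alpha_\zs{max}]$. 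Combining this with $\sigma/H$ and taking $\sup_{\theta\in\Theta}$ (absorbing $\theta^{2}\le\a_\zs{max}^{2}$ into $\V_\zs{m}$) produces \eqref{ineq:1}.

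I expect the main obstacle to be precisely the derivation of $\E_\zs{\theta}\Phi_\zs{T}^{2m}$ with the explicit constant matching $\V_\zs{m}$. Unlike $\D_\zs{T}$, which is covered directly by the scalar inequality \eqref{sec:CoIn-D-T-2-1}, $\varphi$ is nonlinear and has a kink at $x=1/\r$, so one cannot simply quote a pre-existing lemma. A natural route is to represent $\Phi_\zs{T}$ via the Poisson equation for the CIR generator applied to $\varphi-\mu_\zs{a,\theta}$ and then invoke Itô's formula, splitting $\Phi_\zs{T}$ into a martingale part controlled by Burkholder--Davis--Gundy together with the ergodic moment estimates \eqref{MomentCIR-2}, plus boundary terms whose $\r$-dependence accounts for the polynomial factors. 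The delicate point is to track the constants carefully so that they assemble into exactly the expression in $\V_\zs{m}$.
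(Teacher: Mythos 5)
Your proposal is correct and follows the paper's own proof essentially step by step: the same error decomposition with the $\sigma/H$ term from \cite[Theorem 1]{BenAlayaetal2025}, the same truncation $\varphi(x)=\min(x^{-1},\r)\le x^{-1}$ giving the event inclusion, and the same Chebyshev bound on $\P_\zs{\theta}(\tau_\zs{H}>T)$ via the $2m$-th moment of $\Delta_\zs{T}(\varphi)$, with $\theta^{2}\le\a^{2}_\zs{max}$ absorbed into $\V_\zs{m}$. Your only misapprehension is harmless: the paper \emph{does} simply quote a pre-existing result, namely its Theorem~\ref{Th.sec:CoIn-2}, which is stated for any continuous and bounded $\phi$ (so the kink at $x=1/\r$ is no obstacle, and $\phi_\zs{*}=\r$ injects the factor $\r^{2m}$), and whose proof is exactly your proposed fallback --- the Poisson equation \eqref{sec:CoIn-1-diif-1} solved explicitly in \eqref{sec:CoIn-2-33}, It\^o's formula, and the martingale moment bound \eqref{sec:uppBnd-SI-1} --- with the constants in $\V_\zs{m}$ arising from the sup-norm bound \eqref{sec:UBND-y-*} on the solution $y$ rather than from bounds on $\mu_\zs{a,\theta}$ itself.
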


\begin{proof}
	\noindent  First, note that
	from \cite[Theorem 1]{BenAlayaetal2025} it follows that
	\begin{equation}\label{ineq:varalpha}
		\E_\zs{\theta}(	\wt{\theta}_\zs{H,T}-\theta)^2\le \E_\zs{\theta}(	\wh{\theta}_\zs{\tau_\zs{H}}-\theta)^2\Chi_{\{\tau_\zs{H}\le T\}}+\theta^2\P_\theta(\tau_\zs{H}> T)\le \frac{\sigma}{H}+\theta^2\P_\theta(\tau_\zs{H}> T)\,.
	\end{equation}
	\noindent  To estimate the last term in this inequality note that $\varphi(x)=\min\big(x^{-1},\r \big) \le x^{-1}$ and, therefore, we can deduce that 
	\begin{equation}
		\label{ineq:prob-tau2}
		\P_\zs{\theta} (\tau_\zs{H}>T)=
		\P_\zs{\theta}\left(\int^{T}_\zs{0} X_\zs{s}^{-1}\d s<H\right)
		\le 
		\P_\zs{\theta}\left(\int^{T}_\zs{0} \varphi(X_\zs{s})\d s<H\right)\,.
	\end{equation}
	\noindent   Now,  to use the deviations in the ergodic theorem for the process  \eqref{sec:Intr.CIR} we set
	\begin{equation}
		\label{Deviation-2}
		\Delta_\zs{T}(\varphi)=\int^{T}_\zs{0}\,\left(
		\varphi(X_\zs{s})
		-
		\mu_\zs{a,\theta}
		\right)\,\d s\,.
	\end{equation}
	\noindent 
	Then, from \eqref{ineq:prob-tau2} and \eqref{sec:CoIn-2-1}  for 
	$0<H<\mu_\zs{a,*}T$ and $m>1$, we have
	\begin{align*}
		\P_\zs{\theta} (\tau_\zs{H}>T)
		&\le \P_\zs{\theta}\big( \mu_\zs{a,\theta} T+	\Delta_\zs{T}(\varphi)<H\big)
		\\[2mm] 
		&\le \P_\zs{\theta} (	|\Delta_\zs{T}(\varphi)|>\mu_\zs{a,*} T-H)
		\le \frac{\E_\zs{\theta}\, \Delta_\zs{T}(\varphi)^{2m}}{(\mu_\zs{a,*} T-H)^{2m}}
		\\[2mm]	
		&\le 
		\frac{T^{m}}{(\mu_\zs{a,*}T-H)^{2m}}\frac{\r^{2m} \L_\zs{m}}{\sigma^{2m}}
		\left(\frac{4\,e^{\beta }}{\alpha}+\frac{2^{\alpha}\Gamma(\alpha)}{ \beta^{\alpha}}
		+\,
		\frac{2^{\alpha}\,}{ \beta}\right)^{2m}.
	\end{align*}
	\noindent  Therefore, 
	\begin{equation}
		\label{sec-aa-tau-aa-bnd}
		\sup_\zs{\theta\in\Theta}\,
		\P_\zs{\theta} (\tau_\zs{H}>T)
		\le \frac{T^{m}\r^{2m} \L_\zs{m}}{(\mu_\zs{a,*}T-H)^{2m}\sigma^{2m}}
		\,
		\left(\frac{4\,e^{\beta }}{\alpha_\zs{min}}+\frac{2^{\alpha_\zs{max}}\Gamma_\zs{max}}{ \beta^{\alpha_\zs{min}}\wedge\beta^{\alpha_\zs{max}}}+\,
		\frac{{2^{\alpha_\zs{max}}}\,}{ \beta}\right)^{2m}\,,
	\end{equation}
	\noindent 
	where the parameters $\alpha_\zs{min}$,  $\alpha_\zs{max}$ and $\Gamma_\zs{max}$ are defined in  \eqref{ineq:1}.
	Using this bound 
	in \eqref{ineq:varalpha},  
	we obtain \eqref{ineq:1}. 
\end{proof}

\noindent Now  similarly to the definition
\eqref{sec;Opt-H} we choose an optimal value for the parameter $H$ 
to minimise the estimation accuracy \eqref{ineq:1}, i.e.
\begin{equation}
	\label{sec;Opt-H-a}
	H^{*}_\zs{T}=\argmin_\zs{0<H< \mu_\zs{a,*} T} \e_\zs{m}(H,T)\,.
\end{equation}
\noindent  Using this parameter in  \eqref{eq:TrunEst-a}, we obtain the following sequential estimation 
procedure $\big(\tau^{*}_\zs{T},\theta^{*}_\zs{T} \big)$, in which
\begin{equation}
	\label{sec;Opt-H-Seq-Pr-a}
	\tau^{*}_\zs{T}=\wt{\tau}_\zs{H^{*}_\zs{T},T} 
	\quad\mbox{and}\quad
	\theta^{*}_\zs{T}=
	\wt{\theta}_\zs{H^{*}_\zs{T},T}
	\,.
\end{equation}

\noindent
Now,
we can show the following result.
\begin{corollary}\label{cor:b--aa}
	Assume that for some $0<\delta<1/2$
	\begin{equation}
		\label{sec:Cond-1}
		\r=\mathrm{O}(T^{\delta})
		\quad\mbox{as}\quad
		T\to\infty\,.
	\end{equation}
	\noindent Then, for  any $m>(1-2\delta )^{-1}$ 
	the optimal truncated procedure  
	\eqref{sec;Opt-H-Seq-Pr-a}
	posses the following asymptotic 
	properties:.
	
	\begin{enumerate}
		
		\item 
		the optimal parameter
		\eqref{sec;Opt-H-a}
		is represented as
		\begin{equation}\label{H1--a}
			H^{*}_\zs{T}=\mu_\zs{a,*} T-
			\r^{\frac{2m}{2m+1}}\,
			(2m\V_\zs{m}{\mu_\zs{a,*}^2}/\sigma)^{\frac{1}{2m+1}} T^{\frac{2+m}{2m+1}}(1+\mathrm{o}(1))
			\quad\mbox{as}\quad T\to\infty \,;
		\end{equation} 
		\item
		\noindent 
		the corresponding optimal estimation accuracy 
		has the following form 
		\begin{equation}\label{accuracy-H2--aa}
			\sup_{\theta\in\Theta}\E_\zs{\theta}\big( \theta^{*}_\zs{T}-\theta\big)^2\le
			\e_\zs{m}(H^{*}_\zs{T},T)=
			\frac{\sigma}{\mu_\zs{a,*} T}+\mathrm{o}\left(\frac{1}{T}\right)
			\quad\mbox{as}\quad T\to\infty \,.
		\end{equation}
	\end{enumerate}
\end{corollary}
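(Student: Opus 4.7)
The plan is to mirror the proof of Corollary \ref{cor:b}, tracking the additional $\r^{2m}$ factor appearing in the penalty term of $\e_\zs{m}(H,T)$ from Theorem~\ref{Th.thm:est-a-1}. The first step is to set the derivative
\begin{equation*}
\frac{\partial \e_\zs{m}(H,T)}{\partial H}=-\frac{\sigma}{H^2}+\frac{2m\,T^m\r^{2m}\V_\zs{m}}{(\mu_\zs{a,*}T-H)^{2m+1}}
\end{equation*}
equal to zero, which rearranges into the implicit identity
\begin{equation*}
\mu_\zs{a,*}T-H^{*}_\zs{T} = \left(\frac{2m\V_\zs{m}}{\sigma}\right)^{\frac{1}{2m+1}} \r^{\frac{2m}{2m+1}} T^{\frac{m}{2m+1}} (H^{*}_\zs{T})^{\frac{2}{2m+1}}.
\end{equation*}

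Second, using the \emph{a priori} constraint $H^{*}_\zs{T}<\mu_\zs{a,*}T$, the right-hand side is bounded and yields
\begin{equation*}
H^{*}_\zs{T} > \mu_\zs{a,*}T - \left(\frac{2m\V_\zs{m}\mu_\zs{a,*}^{2}}{\sigma}\right)^{\frac{1}{2m+1}} \r^{\frac{2m}{2m+1}} T^{\frac{m+2}{2m+1}},
\end{equation*}
and feeding this back into the implicit identity produces a matching upper bound of the same order. With $\r=\mathrm{O}(T^{\delta})$, the correction has order $T^{(2m\delta+m+2)/(2m+1)}$, which is $\mathrm{o}(T)$ precisely when $m(1-2\delta)>1$, i.e.\ under the hypothesis $m>(1-2\delta)^{-1}$. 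This confirms \eqref{H1--a} and in particular shows $H^{*}_\zs{T}=\mu_\zs{a,*}T\,(1+\mathrm{o}(1))$.

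Third, I would substitute this expansion back into the accuracy bound \eqref{ineq:1}. The first term gives $\sigma/H^{*}_\zs{T} = \sigma/(\mu_\zs{a,*}T) + \mathrm{o}(1/T)$. For the penalty term, a direct exponent calculation yields
\begin{equation*}
\frac{T^{m}\r^{2m}\V_\zs{m}}{(\mu_\zs{a,*}T-H^{*}_\zs{T})^{2m}} = \mathrm{O}\left(\r^{\frac{2m}{2m+1}} T^{-\frac{3m}{2m+1}}\right) = \mathrm{O}\left(T^{\frac{m(2\delta-3)}{2m+1}}\right),
\end{equation*}
and the same inequality $m(1-2\delta)>1$ gives $m(2\delta-3)/(2m+1)<-1$, so this remainder is $\mathrm{o}(1/T)$, establishing \eqref{accuracy-H2--aa}.

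The main obstacle is to verify that the single hypothesis $m>(1-2\delta)^{-1}$ sharply controls two distinct exponent bookkeeping tasks: the first-order expansion of $H^{*}_\zs{T}$ requires the correction term to be $\mathrm{o}(T)$, while the residual penalty in $\e_\zs{m}(H^{*}_\zs{T},T)$ must be $\mathrm{o}(1/T)$, and both reductions happen to yield the same inequality $m(1-2\delta)>1$. Once this coincidence is checked, the remainder of the argument is purely algebraic and proceeds in exact analogy with the scalar-$b$ case treated in Corollary \ref{cor:b}.
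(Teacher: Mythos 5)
Your proposal is correct and follows essentially the same route as the paper: differentiating $\e_\zs{m}(H,T)$ to obtain the implicit identity \eqref{eq:H1--1-a}, sandwiching $H^{*}_\zs{T}$ via the constraint $H^{*}_\zs{T}<\mu_\zs{a,*}T$ and the feedback substitution, and then inserting the resulting expansion into \eqref{ineq:1}. Your exponent bookkeeping checks out — both the tightness of the sandwich and the $\mathrm{o}(1/T)$ bound on the penalty term reduce to $m(1-2\delta)>1$, exactly as in the paper's argument, which you in fact verify more explicitly than the paper does.
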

\begin{proof}
	\noindent First note that to calculate  the parameter \eqref{sec;Opt-H-a}  one needs to 
	study the  equation
	$$
	\frac{\partial }{\partial H}\,
	\e_\zs{m}(H,T)
	=0
	\,.
	$$ 
	\noindent 
	Using the form of the function $\e_\zs{m}(H,T)$
	defined
	in
	\eqref{ineq:1} the root of this equation can be represented as
	\begin{equation}\label{eq:H1--1-a}
		H^{*}_\zs{T}=\mu_\zs{a,*}T-
		\r^{{\frac{2m}{2m+1}}}\,
		(2m \V_\zs{m}/\sigma)^{\frac{1}{2m+1}}\,
		(H^{*}_\zs{T})^{\frac{2}{2m+1}}
		{T^{\frac{m}{2m+1}}}.
	\end{equation}
	\noindent Taking into account here that $H^{*}_\zs{T}<\mu_\zs{a,*}T$ the parameter $H^{*}_\zs{T}$ 
	can be estimated from below as
	\begin{equation} \label{eq:H1-1-a}
		H^{*}_\zs{T}>\mu_\zs{a,*}T-
		\r^{{\frac{2m}{2m+1}}}\, 
		(2m\V_\zs{m}{\mu_\zs{a,*}^2}/\sigma)^{\frac{1}{2m+1}}T^{\frac{2+m}{2m+1}} 
= \mu_\zs{a,*}T\,\check{\varpi}_\zs{T,m}\,,
	\end{equation}
	\noindent where $\check{\varpi}_\zs{T,m}=1-
		\r^{{\frac{2m}{2m+1}}}\,
		(2m\V_\zs{m}{\mu_\zs{a,*}^{1-2m}}/\sigma)^{\frac{1}{2m+1}}T^{-\frac{m-1}{2m+1}}$.
	Moreover,  using this bound  in \eqref{eq:H1--1-a}
	the parameter $H^{*}_\zs{T}$  can be estimated from above as  
	\begin{equation}\label{eq:H1-2-a}
		H^{*}_\zs{T}<\mu_\zs{a,*}T-
		\r^{{\frac{2m}{2m+1}}}\,
		(2m\V_\zs{m}{\mu_\zs{a,*}^2}/\sigma)^{\frac{1}{2m+1}}\,
	T^{\frac{2+m}{2m+1}}
		\left(
		\check{\varpi}_\zs{T,m}
		\right)^{\frac{2}{2m+1}}\,.
	\end{equation}
	\noindent
	Taking into account 
	the condition \eqref{sec:Cond-1}
	and using the bounds \eqref{eq:H1-1-a} and \eqref{eq:H1-2-a}
	 we get the asymptotic equality \eqref{H1--a}.
	Moreover, 
	using this form in
	the bound
	\eqref{ineq:1}, we obtain  the representation \eqref{accuracy-H2--aa}.
\end{proof}

\begin{remark} 
	\label{Re.sec:non-as--11}
	It should be noted that we can estimate the parameter $\mu_\zs{a,*}$ from below.  First of all note that for any
	$\theta \in\Theta$
	$$
	\mu_\zs{a,\theta}
	=\frac{\beta^{\alpha}}{\Gamma(\alpha)}
	\int^{\infty}_\zs{0}\,\min\big(z^{-1},\r\big)
	\,z^{\alpha-1}\,e^{- \beta z}
	\,
	\d z
	\le
	\frac{\beta^{\alpha}}{\Gamma(\alpha)}
	\int^{\infty}_\zs{0}\,
	\,z^{\alpha-2}\,e^{- \beta z}
	\,
	\d z 
	= 
	\frac{2b}{2\theta-\sigma}
	$$
	\noindent and, therefore,
	$$
	\mu_\zs{a,*}\le 
	\frac{2b}{2\a_\zs{max}-\sigma}\,.
	$$
	\noindent Moreover, note also that 
	we can deduce the following inequality
	\begin{align*}
		\mu_\zs{a,\theta}
		&
		\ge
		\frac{\beta^{\alpha}}{\Gamma(\alpha)}
		\int^{\infty}_\zs{\r^{-1}}
		\,z^{\alpha-2}
		\,
		\d z
		=
		\frac{2b}{2\theta-\sigma}
		-
		\frac{\beta^{\alpha}}{\Gamma(\alpha)}
		\int^{\r^{-1}}_\zs{0}
		\,z^{\alpha-2}\,e^{- \beta z} \d z
		\\[2mm]&
		\ge 
		\frac{2b}{2\theta-\sigma}
		-
		\frac{\beta^{\alpha}}{\Gamma(\alpha)}
		\int^{\r^{-1}}_\zs{0}
		\,z^{\alpha-2}\,
		\,
		\d z
		\ge \frac{2b}{2\a_\zs{max}-\sigma}
		-
		\frac{\u_\zs{*}}{\r^{\alpha_\zs{min}-1}}
		\,,
	\end{align*}
	\noindent in which 
	$$
	\u_\zs{*}
	=
	\frac{ \beta^{\alpha_\zs{min}}\vee \beta^{\alpha_\zs{max}}}{\Gamma_\zs{min} (\alpha_\zs{min}-1)}\,,
	$$
	\noindent  where
	$a\vee b =\max(a,b)$ and $\Gamma_\zs{min}=\min_\zs{\alpha_\zs{min}\le\alpha\le \alpha_\zs{max}}\Gamma(\alpha)$. Now 
	setting here
	for any
	$0<\epsilon<1 \wedge (2\a_\zs{max}-\sigma)\u_\zs{*} / (2b)$
	the threshold $\r$ as
	\begin{equation}
		\label{sec:radius-rr}
		\r= 
		\left(
		\frac{ (2\a_\zs{max}-\sigma)\u_\zs{*}}{2b\epsilon}
		\right)^{\frac{1}{\alpha_\zs{min}-1}}\,,
	\end{equation}
	\noindent we obtain that
	$$
	\mu_\zs{a,*}\ge (1-\epsilon) \frac{2b}{2\a_\zs{max}-\sigma}\,.
	$$
	
	\noindent Therefore, choosing $\epsilon=T^{- \delta (\alpha_\zs{min}-1)}$ for some $0<\delta<1/2$ we obtain that for $T\to \infty$
	$$
	\mu_\zs{a,*}\to
	\frac{2b}{2\a_\zs{max}-\sigma}
	\quad\mbox{and}\quad
	\r= \OO(T^{\delta})
	\,.
	$$
	\noindent Note that the Fisher information in this case
	\begin{equation}
		\label{sec:Fisher-Info--1-bb-0}
		I_\zs{b}(\theta)=\frac{2b}{\sigma (2\theta-\sigma)}
		\quad\mbox{and}\quad
		I_\zs{b,*}=\min_\zs{\theta\in\Theta}I(\theta)=
		I_\zs{b}(\a_\zs{max})=\frac{2b}{\sigma(2\a_\zs{max}-\sigma)}
		\,.
	\end{equation}
	
	\noindent Therefore, from 
	\eqref{H1--a} and
	\eqref{accuracy-H2--aa} it follows immediately that
	
	\begin{equation}\label{H1--a-opt}
		H^{*}_\zs{T}=
		\frac{2b}{2\a_\zs{max}-\sigma} 
		T +\oo(T)
		\quad\mbox{as}\quad T\to\infty
	\end{equation}
	\noindent and
	\begin{equation}\label{accuracy-H2--opt-a}
		\sup_{\theta\in\Theta}\E_\zs{\theta}\big( \theta^{*}_\zs{T}-\theta\big)^2\le
		\frac{1}{I_\zs{b,*} T}+\mathrm{o}\left(\frac{1}{T}\right)
		\quad\mbox{as}\quad T\to\infty \,.
	\end{equation}
\end{remark}

\begin{remark} 
	\label{Re.sec:non-as--22}
	Note that  we will see later that
asymptotically, as  $T\to\infty$,	
	 the bounds  \eqref{accuracy-H2-II}
	and \eqref{accuracy-H2--opt-a} are minimal.
\end{remark}

\subsection{Optimality properties for the procedure \eqref{sec;Opt-H-Seq-Pr}.}

\noindent
Now let us consider the properties of stopping time that determines the duration of  estimation in procedure
\eqref{sec;Opt-H-Seq-Pr}.

\begin{proposition}
	\label{Pr.sec:SeqEst-1}
	For  any compact set $\Theta\subset ]0,+\infty[$ the stopping time $\tau^{*}_\zs{T}$ defined in
	the  procedure \eqref{sec;Opt-H-Seq-Pr} for any $r>0$
	satisfies the following asymptotic property  
	\begin{equation}
		\label{sec:Sqe-MeanTime-1bis}
		\lim_\zs{T\to\infty}\sup_\zs{\theta\in \Theta}\E_\zs{\theta}\left| \frac{\tau^{*}_\zs{T}}{T}-
		\frac{\theta }{\b_\zs{max}}\right|^{r}=0\,.
	\end{equation}
\end{proposition}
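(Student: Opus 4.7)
The plan is to exploit that both $\tau^*_T/T$ and $\theta/\b_\zs{max}$ lie in $[0,1]$, so the random variable $|\tau^*_T/T - \theta/\b_\zs{max}|^r$ is uniformly bounded by $1$. By bounded convergence, it then suffices to prove uniform convergence in probability, i.e.
$$
\lim_\zs{T\to\infty}\sup_\zs{\theta\in\Theta}
\P_\zs{\theta}\bigl(|\tau^*_\zs{T}/T - \theta/\b_\zs{max}|>\epsilon\bigr)=0
\quad\text{for every } \epsilon>0,
$$
and the result in $L^{r}$ will follow immediately for any $r>0$.

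To bound the lower tail, I would fix $\epsilon\in (0,\theta/\b_\zs{max})$ (the complementary case makes the event empty since $\tau^*_T\ge 0$) and set $t_\zs{1}=T(\theta/\b_\zs{max}-\epsilon)<T$. Then $\tau^*_T<t_\zs{1}$ forces $\tau^*_T=\tau_\zs{H^*_T}$, hence by the very definition of $\tau_\zs{H}$, $\{\tau_\zs{H^*_\zs{T}}<t_\zs{1}\}=\{\int_\zs{0}^{t_\zs{1}}X_\zs{s}\d s>H^*_\zs{T}\}=\{\D_\zs{t_\zs{1}}>H^*_\zs{T}-(a/\theta)t_\zs{1}\}$. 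Using the asymptotics $H^*_\zs{T}=\a_\zs{*}T+\oo(T)$ from \eqref{H1} and the identity $(a/\theta)t_\zs{1}=\a_\zs{*}T-a\epsilon T/\theta$, one checks $H^*_\zs{T}-(a/\theta)t_\zs{1}\ge c\epsilon T$ for $T$ large, uniformly in $\theta\in\Theta$. Markov's inequality combined with the moment bound for $\D_\zs{t}$ from \eqref{sec:PP-tau-Up} then gives $\P_\zs{\theta}(\tau^*_\zs{T}<t_\zs{1})\le C\, T^{-m}$, uniformly in $\theta$.

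The upper tail is symmetric: if $\theta/\b_\zs{max}+\epsilon\ge 1$ the event $\{\tau^*_\zs{T}>T(\theta/\b_\zs{max}+\epsilon)\}$ is empty since $\tau^*_\zs{T}\le T$; otherwise $t_\zs{2}:=T(\theta/\b_\zs{max}+\epsilon)<T$ and one verifies the equality $\{\tau^*_\zs{T}>t_\zs{2}\}=\{\tau_\zs{H^*_\zs{T}}>t_\zs{2}\}=\{\D_\zs{t_\zs{2}}<H^*_\zs{T}-(a/\theta)t_\zs{2}\}$. Here $H^*_\zs{T}-(a/\theta)t_\zs{2}=-a\epsilon T/\theta+\oo(T)\le -c\epsilon T$ for large $T$, and once again Markov together with $\E_\zs{\theta}\D_\zs{t_\zs{2}}^{2m}\le C\,t_\zs{2}^{m}\le C\,T^{m}$ yields a uniform $O(T^{-m})$ bound.

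Collecting both tails, $\sup_\zs{\theta\in\Theta}\P_\zs{\theta}(|\tau^*_\zs{T}/T-\theta/\b_\zs{max}|>\epsilon)=O(T^{-m})\to 0$, which combined with bounded convergence establishes \eqref{sec:Sqe-MeanTime-1bis}. The main obstacle in this scheme is ensuring that the deviation $H^*_\zs{T}-(a/\theta)t_\zs{i}$ is of exact order $\epsilon T$ uniformly in $\theta\in\Theta$: this is where the explicit rate from Corollary \ref{cor:b} (namely $H^*_\zs{T}-\a_\zs{*}T=\oo(T)$) is essential and where the boundary case $\theta\to\b_\zs{max}$ must be treated by the ``empty event'' observation rather than by a direct estimate.
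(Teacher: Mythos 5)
Your proposal is correct, and it takes a genuinely different route from the paper's own proof. The paper reduces everything to the untruncated stopping time: it bounds $\E_\zs{\theta}|\tau^{*}_\zs{T}/T-\theta/\b_\zs{max}|^{r}\le \E_\zs{\theta}|\tau_\zs{H^{*}_\zs{T}}/T-\theta/\b_\zs{max}|^{r}+\P_\zs{\theta}(\tau_\zs{H^{*}_\zs{T}}>T)$, then imports the uniform $L^{r}$ law $\lim_\zs{H\to\infty}\sup_\zs{\theta\in\Theta}\E_\zs{\theta}|\tau_\zs{H}/H-\theta/a|^{r}=0$ from \cite{BenAlayaetal2025} (quoted as \eqref{sec:Sqe-MeanTime-1}) and combines it with the asymptotics \eqref{H1} for $H^{*}_\zs{T}$ and the tail estimate \eqref{sec:Sqe-MeanTime-**-1}. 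You instead exploit exactly what truncation buys --- $\tau^{*}_\zs{T}\le T$, hence $|\tau^{*}_\zs{T}/T-\theta/\b_\zs{max}|\le 1$ --- which via the elementary bound $\E|Z|^{r}\le \epsilon^{r}+\P(|Z|>\epsilon)$ reduces uniform $L^{r}$ convergence to uniform convergence in probability, and you then estimate the two tails directly by Chebyshev's inequality and the concentration bound for $\D_\zs{t}$. This makes your argument self-contained within the present paper (no appeal to the external $L^{r}$ result for $\tau_\zs{H}/H$, whose proof requires nontrivial moment control of the unbounded stopping time) and yields an explicit polynomial rate of order $\epsilon^{-2m}T^{-m}$ for the probability tails; the paper's proof is shorter but inherits the hard analysis from the earlier reference. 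Two small repairs to your writeup: the moment bound you need for $\E_\zs{\theta}\D_\zs{t}^{2m}$ is \eqref{sec:CoIn-D-T-2-1} from Theorem \ref{Th.sec:CoIn-00} rather than \eqref{sec:PP-tau-Up} (the latter is the derived probability bound at the fixed horizon $T$), and \eqref{sec:CoIn-D-T-2-1} is stated only for times $\ge 1$; so in the lower tail, for those $\theta$ with $\theta/\b_\zs{max}-\epsilon$ small enough that $t_\zs{1}=T(\theta/\b_\zs{max}-\epsilon)<1$, you should add the one-line substitute $\E_\zs{\theta}|\D_\zs{t_\zs{1}}|^{2m}\le 2^{2m-1}\big(\x_\zs{2m}+(a/\b_\zs{min})^{2m}\big)$, after which Markov's inequality with the threshold $H^{*}_\zs{T}-(a/\theta)t_\zs{1}\ge \a_\zs{*}\epsilon T/2$ --- valid for large $T$ uniformly in $\theta\in\Theta$ precisely because $\a_\zs{*}T-H^{*}_\zs{T}=\oo(T)$ by \eqref{H1}, as you correctly single out --- goes through unchanged. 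With these cosmetic fixes your scheme is complete, and the boundary cases ($\theta/\b_\zs{max}\le\epsilon$ and $\theta/\b_\zs{max}+\epsilon\ge 1$) are indeed handled correctly by your empty-event observations, since $0\le\tau^{*}_\zs{T}\le T$.
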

\begin{proof}
	First note that for the stopping time 
	\eqref{eq:SMLE-bbEs-1} 
	in \cite{BenAlayaetal2025} it is shown that
	any compact set $\Theta\subset ]0,+\infty[$ and any $r>0$
	\begin{equation}
		\label{sec:Sqe-MeanTime-1}
		\lim_\zs{H\to\infty}\sup_\zs{\theta\in \Theta}\E_\zs{\theta}\left| \frac{\tau_\zs{H}}{H}-\frac{\theta}{a}\right|^{r}=0
		\,.
	\end{equation}
	
	\noindent 
	Note also that using the bound 
	\eqref{sec:PP-tau-Up} and the representation \eqref{H1}
	one can obtain that for any $m\ge 2$
	\begin{equation}
		\label{sec:Sqe-MeanTime-**-1}
	{	\P_\zs{\theta}} (\tau_\zs{H^{*}_\zs{T}}> T)=
		\mathrm{O}\left(\frac{1}{T^{\frac{3m}{2m+1}}}\right)
		\quad\mbox{as}\quad T\to\infty \,.
	\end{equation}
	
	\noindent Now using this we obtain that for any $\theta$ from $\Theta$
	
	\begin{align*}
		\E_\zs{\theta}\left| \frac{\tau^{*}_\zs{T}}{T}-\frac{\theta}{\b_\zs{max}} \right|^{r}&= 
		\E_\zs{\theta}\left|  \frac{\tau_\zs{H^{*}_\zs{T}}}{T}-\frac{\theta}{\b_\zs{max}}\right|^{r}
		\Chi_\zs{\{\tau_\zs{H^{*}_\zs{T}} \le T\}}+
		\E_\zs{\theta}\left| 1
		-\frac{\theta}{\b_\zs{max}} \right|^{r} \Chi_{\{\tau_\zs{H^{*}_\zs{T}} > T\}}\\[2mm]
		&\le \E_\zs{\theta}\left| \frac{\tau_\zs{H^{*}_\zs{T}}}{T}-\frac{\theta}{\b_\zs{max}}\right|^{r} + {	\P_\zs{\theta}}(\tau_\zs{H^{*}_\zs{T}}> T)\,.
	\end{align*}
	Then, the asymptotic equalities \eqref{H1}, \eqref{sec:Sqe-MeanTime-1} and \eqref{sec:Sqe-MeanTime-**-1} yield the property
	\eqref{sec:Sqe-MeanTime-1bis}.
\end{proof}

\medskip

\noindent Now to study local optimality properties for sequential procedures 
we set  the local class of sequential procedures  defined as
\begin{equation}
	\label{sec:AsOpt-1}
	\cH_\zs{T}(\theta_\zs{0},\gamma)=\left\{
	(\tau\,,\,\wh{\theta}_\zs{\tau})\,:\,
	\sup_\zs{|\theta-\theta_\zs{0}|<\gamma}\,\E_\zs{\theta}\tau\le T
	\right\}\,,
\end{equation}
\noindent where  $\theta_\zs{0}\in\Theta$ and $\gamma>0$ such that $\{|\theta-\theta_\zs{0}|\le \gamma\}\subseteq \Theta$.

\begin{theorem}
	\label{Th.sec:SeqEst-1}
	For any  $\theta_\zs{0}>0$ the sequential procedure \eqref{sec;Opt-H-Seq-Pr} is pointwise optimal
	\begin{equation}
		\label{sec:OPT-EstPrs-1}
		\lim_\zs{\gamma\to 0}\,
		\underline{\lim}_\zs{T\to\infty}
		\frac{
			\inf_\zs{
				(\tau,\wh{\theta}_\zs{\tau})
				\in
				\cH_\zs{T}(\theta_\zs{0},\gamma)}
			\sup_\zs{|\theta-\theta_\zs{0}|<\gamma}\,
			\E_\zs{\theta}\big( \wh{\theta}_\zs{\tau}-\theta\big)^2}{\sup_\zs{|\theta-\theta_\zs{0}|<\gamma}\,
			\E_\zs{\theta}\big( \theta^{*}_\zs{T}-\theta\big)^2}
		=1\,.
	\end{equation}			
\end{theorem}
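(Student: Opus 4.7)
The plan is to match the non-asymptotic upper bound from Corollary~\ref{cor:b} with a van Trees (Bayesian Cram\'er--Rao) lower bound, using the fact that the class $\cH_\zs{T}(\theta_\zs{0},\gamma)$ is regular enough for local asymptotic minimax theory to apply. Without loss of generality I take $\Theta=[\theta_\zs{0}-\gamma,\theta_\zs{0}+\gamma]$, so that $\b_\zs{max}=\theta_\zs{0}+\gamma$ and $I_\zs{a,*}=I_\zs{a}(\b_\zs{max})\to I_\zs{a}(\theta_\zs{0})$ as $\gamma\to 0$. For the denominator of \eqref{sec:OPT-EstPrs-1}, bound
\begin{equation*}
\sup_\zs{|\theta-\theta_\zs{0}|<\gamma}\E_\zs{\theta}\big(\theta^{*}_\zs{T}-\theta\big)^2
\le \sup_\zs{\theta\in\Theta}\E_\zs{\theta}\big(\theta^{*}_\zs{T}-\theta\big)^2
= \frac{1}{I_\zs{a,*}\,T}\,(1+\oo(1))
\end{equation*}
using \eqref{accuracy-H2-II}. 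Note also that $\theta^{*}_\zs{T}\in\cH_\zs{T}(\theta_\zs{0},\gamma)$ since $\tau^{*}_\zs{T}\le T$ a.s., so the ratio in \eqref{sec:OPT-EstPrs-1} is automatically bounded by $1$; only the matching lower bound remains to be proven.

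For the numerator I would fix a smooth probability density $\pi$ supported on $[\theta_\zs{0}-\gamma,\theta_\zs{0}+\gamma]$ with finite Fisher-type functional $J(\pi)=\int (\pi'(\theta))^2/\pi(\theta)\d\theta$, and apply the van Trees inequality to any procedure $(\tau,\wh{\theta}_\zs{\tau})\in\cH_\zs{T}(\theta_\zs{0},\gamma)$. The Girsanov formula for \eqref{sec:Intr.CIR} (with $\theta=b$ unknown) yields, on the event $\{\tau<\infty\}$, the Fisher information
\begin{equation*}
I_\zs{\tau}(\theta)=\frac{1}{\sigma}\,\E_\zs{\theta}\int_\zs{0}^{\tau} X_\zs{s}\,\d s,
\end{equation*}
so that
\begin{equation*}
\sup_\zs{|\theta-\theta_\zs{0}|<\gamma}\E_\zs{\theta}(\wh{\theta}_\zs{\tau}-\theta)^2
\ge \int \E_\zs{\theta}(\wh{\theta}_\zs{\tau}-\theta)^2\pi(\theta)\d\theta
\ge \frac{1}{\int I_\zs{\tau}(\theta)\pi(\theta)\d\theta + J(\pi)}.
\end{equation*}

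The key step is to bound $I_\zs{\tau}(\theta)$ by the sampling budget $T$. Using optional stopping on the local martingale $M_\zs{t}=\int_\zs{0}^t\sqrt{\sigma X_\zs{s}}\d W_\zs{s}$ (together with the uniform moment estimates of Section~\ref{sec:CoIn} to justify it for $\tau\wedge t$ and pass to the limit), I get the identity $\E_\zs{\theta}\int_\zs{0}^{\tau}(X_\zs{s}-a/\theta)\d s=\OO(1)$ as $\E_\zs{\theta}\tau\to\infty$, and hence
\begin{equation*}
I_\zs{\tau}(\theta)=\frac{a}{\sigma\theta}\E_\zs{\theta}\tau+\oo(\E_\zs{\theta}\tau)
\le I_\zs{a}(\theta)\,T+\oo(T),
\end{equation*}
uniformly in $\theta\in[\theta_\zs{0}-\gamma,\theta_\zs{0}+\gamma]$, using $\E_\zs{\theta}\tau\le T$. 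Substituting this into the van Trees bound and then dividing, the ratio in \eqref{sec:OPT-EstPrs-1} satisfies
\begin{equation*}
\underline{\lim}_\zs{T\to\infty}\frac{\inf\sup\E_\zs{\theta}(\wh{\theta}_\zs{\tau}-\theta)^2}{\sup\E_\zs{\theta}(\theta^{*}_\zs{T}-\theta)^2}
\ge \frac{I_\zs{a,*}}{\int I_\zs{a}(\theta)\pi(\theta)\d\theta},
\end{equation*}
and letting $\gamma\to 0$ (so that $\pi$ concentrates at $\theta_\zs{0}$ and $I_\zs{a,*}\to I_\zs{a}(\theta_\zs{0})$) forces the right-hand side to $1$, completing the proof.

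The hard part will be the uniform control of the Fisher-information remainder, i.e.\ showing that $\E_\zs{\theta}\int_\zs{0}^{\tau}(X_\zs{s}-a/\theta)\d s=\oo(T)$ uniformly over all $(\tau,\wh{\theta}_\zs{\tau})\in\cH_\zs{T}(\theta_\zs{0},\gamma)$ and $\theta$ in a neighbourhood of $\theta_\zs{0}$. This is delicate because $\tau$ is an arbitrary stopping time, not necessarily small compared to the mixing time, so one must combine Wald-type identities derived from \eqref{sec:Intr.CIR} with the concentration inequalities of Section~\ref{sec:CoIn} and the ergodic properties of the CIR density \eqref{ergdenssityCIR}, in the same spirit as the asymptotic analysis already used in Proposition~\ref{Pr.sec:SeqEst-1} and in \cite{BenAlayaetal2025}.
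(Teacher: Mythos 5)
Your proposal is correct in substance but takes a genuinely different route from the paper. The paper obtains the lower bound by invoking the LAN property of the CIR family (Theorem 5.2 of \cite{BenAlayaetal2025}) together with the sequential H\'ajek--Le Cam type bound of Proposition \ref{Pr.sec:AsOp.-1} with $k=1$, which gives $\underline{\lim}_\zs{T\to\infty}\inf\,T\sup\E_\zs{\theta}(\wh{\theta}_\zs{\tau}-\theta)^2\ge 1/I_\zs{a}(\theta_\zs{0})$ directly; it then matches this against the upper bound \eqref{accuracy-H2-II} using the continuity of $I_\zs{a}(\theta)=a/(\sigma\theta)$ on shrinking neighbourhoods. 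You instead bypass LAN entirely: a van Trees inequality over the experiment stopped at $\tau$, with the averaged Fisher information $I_\zs{\tau}(\theta)=\sigma^{-1}\E_\zs{\theta}\int_\zs{0}^{\tau}X_\zs{s}\,\d s$ controlled through a Wald identity and the budget constraint $\E_\zs{\theta}\tau\le T$ built into $\cH_\zs{T}(\theta_\zs{0},\gamma)$. Your route is more self-contained and non-asymptotic (the score computation $\partial_\zs{\theta}\ln L_\zs{\tau}(\theta)=-\sigma^{-1/2}\int_\zs{0}^{\tau}\sqrt{X_\zs{s}}\,\d W_\zs{s}$ checks out, and the smoothness of the stopped Girsanov density in $\theta$ makes the integration by parts in van Trees legitimate for a prior vanishing at the endpoints), and your remark that the ratio in \eqref{sec:OPT-EstPrs-1} is automatically $\le 1$ because $\tau^{*}_\zs{T}\le T$ a.s.\ puts $(\tau^{*}_\zs{T},\theta^{*}_\zs{T})$ in the class is a clean shortcut the paper does not state explicitly. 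What the paper's route buys in exchange is reusability: the same Proposition \ref{Pr.sec:AsOp.-1} serves verbatim for Theorems \ref{Th.sec:SeqEst-1=a} and \ref{Th.sec:SeqEst-1-a-bb-0} (the case $k=2$), whereas a van Trees argument would have to be redone with a matrix-weighted prior in the two-dimensional setting.

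One correction to your own assessment: the step you flag as ``the hard part'' is in fact easy in the only direction van Trees requires. You do not need $\E_\zs{\theta}\int_\zs{0}^{\tau}(X_\zs{s}-a/\theta)\,\d s=\oo(T)$ uniformly, nor the two-sided expansion $I_\zs{\tau}(\theta)=\frac{a}{\sigma\theta}\E_\zs{\theta}\tau+\oo(\E_\zs{\theta}\tau)$; an upper bound on $I_\zs{\tau}(\theta)$ suffices, since the information only enters the van Trees denominator. From \eqref{sec:Intr.CIR}, $X_\zs{t\wedge\tau}-x=a(t\wedge\tau)-\theta\int_\zs{0}^{t\wedge\tau}X_\zs{s}\,\d s+M_\zs{t\wedge\tau}$ with $\E_\zs{\theta}M_\zs{t\wedge\tau}=0$ (justified by the uniform moment bound \eqref{MomentCIR-2}), and since $X_\zs{t\wedge\tau}\ge 0$ this gives $\theta\,\E_\zs{\theta}\int_\zs{0}^{t\wedge\tau}X_\zs{s}\,\d s\le a\,\E_\zs{\theta}\tau+x$; monotone convergence then yields $I_\zs{\tau}(\theta)\le I_\zs{a}(\theta)\,T+x/(\sigma\theta)$ uniformly over the class and over $\theta$ near $\theta_\zs{0}$ --- a two-line argument, with no concentration inequalities or ergodic-density analysis needed. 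With this bound your chain closes exactly as you sketched, since $I_\zs{a,*}/\int I_\zs{a}(\theta)\pi(\theta)\,\d\theta\ge(\theta_\zs{0}-\gamma)/(\theta_\zs{0}+\gamma)\to 1$ as $\gamma\to 0$.
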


\begin{proof}
	First note that as it is established in  \cite[Theorem 5.2.]{BenAlayaetal2025} the process \eqref{sec:Intr.CIR}
	for  any $\theta>0$
	satisfies the LAN condition with the  normalised coefficient $\sqrt{T} I_\zs{a}(\theta)$ defined in \eqref{sec:Fisher-Inf-1}. 
	Therefore, in view of Proposition \ref{Pr.sec:AsOp.-1}  for $k=1$
	we obtain that for any $\theta_\zs{0}>0$ and
	for any  $0<\gamma<\theta_\zs{0}$     
	\begin{equation}
		\label{sec:AsOpt-2-bb}
		\underline{\lim}_\zs{T\to\infty}
		\inf_\zs{(\tau,\wh{\theta}_\zs{\tau})\in\cH_\zs{T}(\theta_\zs{0},\gamma)}\,T
		\sup_\zs{| \theta-\theta_\zs{0}|<\gamma}\E_\zs{\theta}\,(\wh{\theta}_\zs{\tau}-\theta)^{2}
		\ge \frac{1}{I_\zs{a}(\theta_\zs{0})}\,.
	\end{equation}
	\noindent Moreover, it should be noted also that
	from the property \eqref{accuracy-H2-II}
	it follows that
	\begin{align}\nonumber
		\overline{\lim}_\zs{T\to\infty}\,T\,
		&\sup_{\theta\in\Theta}\,
		I_\zs{a}(\theta)\,
		\E_\zs{\theta}\big( \theta^{*}_\zs{T}-\theta\big)^2
		\le
		\max_\zs{\theta\in\Theta} I_\zs{a}(\theta)\,
		\overline{\lim}_\zs{T\to\infty}\,T\,
		\sup_{\theta\in\Theta}\,
		\E_\zs{\theta}\big( \theta^{*}_\zs{T}-\theta\big)^2\\[2mm] \label{accuracy-H2--opt-b--11}
		&\le
		\frac{\max_\zs{\theta\in\Theta} I_\zs{a}(\theta)}{I_\zs{a,*}}
		=
		\frac{\b_\zs{max}}{\b_\zs{min}}\,.
	\end{align}		
	
	\noindent Choosing here $\Theta=[\theta_\zs{0}-\gamma\,,\,\theta_\zs{0}{+}\gamma]$ for $0<\gamma<\theta_\zs{0}$ 
	and taking into account that for this set 
	$\b_\zs{max}/\b_\zs{min}\to 1$ as $\gamma\to 0$,
	we obtain that
	\begin{equation}
		\label{sec:Sqe-EstPrs-1}
		\overline{\lim}_\zs{\gamma\to 0}
		\overline{\lim}_\zs{T\to\infty}\,T\,
		\sup_\zs{|\theta-\theta_\zs{0}|<\gamma}
		I_\zs{a}(\theta)\,
		\E_\zs{\theta}\big( \theta^{*}_\zs{T}-\theta\big)^2\le
		1\,.
	\end{equation}
	\noindent 
	{Now, note} 
	that  the function $I_\zs{a}(\theta)$ is continuous. Therefore, 
	for any $0<\epsilon<1$ there exists some constant $\gamma_\zs{0}>0$  such that
	for all $0<\gamma<\gamma_\zs{0}$
	$$
	I_\zs{a,*}=
	\min_\zs{|\theta-\theta_\zs{0}|<\gamma} I_\zs{a}(\theta)\ge (1-\epsilon) I_\zs{a}(\theta_\zs{0})\,.
	$$
	\noindent  Therefore, for such $\gamma>0$
	\begin{align*}
		\overline{\lim}_\zs{T\to\infty}\,T\,
		&
		\sup_\zs{|\theta-\theta_\zs{0}|<\gamma}\,
		\E_\zs{\theta}\big( \theta^{*}_\zs{T}-\theta\big)^2
		=
		\frac{
			\overline{\lim}_\zs{T\to\infty}\,T\,
			\sup_\zs{|\theta-\theta_\zs{0}|<\gamma}\,I_\zs{a,*}
			\E_\zs{\theta}\big( \theta^{*}_\zs{T}-\theta\big)^2}{I_\zs{a,*}}
		\\[2mm]
		&
		\le
		\frac{
			\overline{\lim}_\zs{T\to\infty}\,T\,
			\sup_\zs{|\theta-\theta_\zs{0}|<\gamma}\,I_\zs{a}(\theta)
			\E_\zs{\theta}\big( \theta^{*}_\zs{T}-\theta\big)^2}{(1-\epsilon)I_\zs{a}(\theta_\zs{0})}
		\le
		\frac{1}{(1-\epsilon)I_\zs{a}(\theta_\zs{0})}\,.
	\end{align*}
	\noindent Taking here the limit as $\gamma\to 0$ and then as $\epsilon\to 0$ we can conclude that
	\begin{equation}
		\label{sec:UpBnd-bb-901}
		\overline{\lim}_\zs{\gamma\to 0}
		\overline{\lim}_\zs{T\to\infty}\,T\,
		\sup_\zs{|\theta-\theta_\zs{0}|<\gamma}\,
		\E_\zs{\theta}\big( \theta^{*}_\zs{T}-\theta\big)^2
		\le
		\frac{1}{I_\zs{a}(\theta_\zs{0})}\,.
	\end{equation}
	\noindent This implies the equality
	\eqref{sec:OPT-EstPrs-1}. Hence Theorem  \ref{Th.sec:SeqEst-1}.
\end{proof}

\noindent
To study optimality properties over some arbitrairy compact set $\Theta$ we use the  the same way as  
in  \cite{BenAlayaetal2025}, i.e.
for some family of
sequential procedures $\big({\tau}^{*}_\zs{T},\theta^{*}_\zs{T}\big)_\zs{T>0}$
such that for any parameter $\theta\in\Theta$  the expectation
$\E_\zs{\theta}\,\tau^{*}_\zs{T}\to+\infty$ as $T\to\infty$  we use the following class 
\begin{equation}
	\label{sec;AssOpt-GlCls-1bis}
	\Xi^{*}_\zs{T}
	=\left\{(\tau, \wh{\theta}_\zs{\tau})\,:\,
	\sup_\zs{\theta\in\Theta}\,
	\frac{\E_\zs{\theta}\tau}{\E_\zs{\theta} \tau^{*}_\zs{T}}
	\le 1
	\right\}\,.
\end{equation}
\noindent 
\begin{theorem}\label{Th.sec:AsOp.-2}
	For any compact set $\Theta\subset ]0,+\infty[$,  the sequential procedure
	\eqref{sec;Opt-H-Seq-Pr}	
	is asymptotically optimal in the minimax setting, i.e.
	\begin{equation}
		\label{sec:AsOpt-SeqPr-1-b-!!}
		\lim_\zs{T\to\infty}
		\frac{\inf_\zs{(\tau,\wh{\theta_\zs{\tau}}) \in\Xi^{*}_\zs{T}}\,
			\sup_\zs{ \theta\in\Theta}\,\E_\zs{\theta}\,\,(\wh{\theta}_\zs{\tau}-\theta)^2}
		{\sup_\zs{ \theta\in\Theta}\,\E_\zs{\theta}\,(\theta^{*}_\zs{T}-\theta)^2}=1\,,
	\end{equation}
	\noindent where the class $\Xi^{*}_\zs{T}$ is defined in \eqref{sec;AssOpt-GlCls-1bis}  through the stopping time $\tau^{*}_\zs{T}$ introduced in	
	\eqref{sec;Opt-H-Seq-Pr}.	
\end{theorem}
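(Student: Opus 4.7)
The plan is to sandwich the ratio in \eqref{sec:AsOpt-SeqPr-1-b-!!} between two matching bounds, both asymptotic to $1/(I_\zs{a,*}T)$. For the denominator, the bound \eqref{accuracy-H2-II} from Remark~\ref{Re.sec:non-as--1-00} already supplies
$$
\sup_{\theta\in\Theta}\E_\zs{\theta}(\theta^*_\zs{T}-\theta)^2\le \frac{1}{I_\zs{a,*}T}(1+\oo(1))\,,\quad T\to\infty\,,
$$
so the task reduces to proving the matching lower bound $\underline{\lim}_\zs{T\to\infty}\,T\,\inf_{(\tau,\wh{\theta}_\zs{\tau})\in\Xi^*_\zs{T}}\sup_{\theta\in\Theta}\E_\zs{\theta}(\wh{\theta}_\zs{\tau}-\theta)^2 \ge 1/I_\zs{a,*}$.

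To produce that bound I would localize the minimax problem at the point $\theta_\zs{0}=\b_\zs{max}$ at which $I_\zs{a}(\theta)$ attains its minimum $I_\zs{a,*}$ on $\Theta$ (see \eqref{sec:Fisher-Inf-1}). Pick any small $\gamma>0$ with $[\b_\zs{max}-\gamma,\b_\zs{max}]\subseteq\Theta$. Proposition~\ref{Pr.sec:SeqEst-1} with $r=1$ yields $\E_\zs{\theta}\tau^*_\zs{T}/T\to\theta/\b_\zs{max}$ uniformly in $\theta\in\Theta$, and since $\theta/\b_\zs{max}\le 1$ with equality at $\b_\zs{max}$ one concludes
$$
\sup_{|\theta-\b_\zs{max}|<\gamma,\,\theta\in\Theta}\E_\zs{\theta}\tau^*_\zs{T}\le T(1+\rho_\zs{T})\,,\quad\rho_\zs{T}\to 0\,.
$$
The defining constraint \eqref{sec;AssOpt-GlCls-1bis} of $\Xi^*_\zs{T}$ then forces every admissible $(\tau,\wh{\theta}_\zs{\tau})$ into the local class $\cH_\zs{T(1+\rho_\zs{T})}(\b_\zs{max},\gamma)$ of \eqref{sec:AsOpt-1}.

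At this stage I would invoke the LAN local asymptotic minimax bound (Proposition~\ref{Pr.sec:AsOp.-1}) at $\theta_\zs{0}=\b_\zs{max}$; its hypotheses hold since \cite{BenAlayaetal2025} established the LAN property for the CIR process with normalisation $\sqrt{T}I_\zs{a}(\theta)$. Applied with the inflated budget $T(1+\rho_\zs{T})$ it gives
$$
\underline{\lim}_\zs{T\to\infty}\,T(1+\rho_\zs{T})\,\inf_{(\tau,\wh{\theta}_\zs{\tau})\in\Xi^*_\zs{T}}\sup_{|\theta-\b_\zs{max}|<\gamma}\E_\zs{\theta}(\wh{\theta}_\zs{\tau}-\theta)^2\ge \frac{1}{I_\zs{a}(\b_\zs{max})}=\frac{1}{I_\zs{a,*}}\,.
$$
Combining with $\rho_\zs{T}\to 0$ and with the trivial inequality $\sup_{\theta\in\Theta}(\cdot)\ge\sup_{|\theta-\b_\zs{max}|<\gamma}(\cdot)$ produces the desired lower bound, and together with the upper bound above yields \eqref{sec:AsOpt-SeqPr-1-b-!!}.

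The main obstacle is that Proposition~\ref{Pr.sec:AsOp.-1} is phrased for a fixed observation budget, whereas the reduction above produces the $T$-dependent budget $T(1+\rho_\zs{T})$; since the statement is asymptotic, the perturbation $\rho_\zs{T}\to 0$ is harmless, but this invariance must be explicitly justified. A minor secondary point is that the localisation at the boundary point $\b_\zs{max}$ requires $\{|\theta-\b_\zs{max}|<\gamma\}\cap\Theta$ to be non-degenerate; for a generic compact interval $\Theta$ this is automatic, and the continuity of $I_\zs{a}(\cdot)$ allows one to replace $\b_\zs{max}$ by a slightly interior approximation if needed.
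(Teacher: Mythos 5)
Your proposal is correct in substance but takes a genuinely different route from the paper's proof. The paper does not localise by hand: it verifies condition $\C_\zs{1})$ via Proposition \ref{Pr.sec:SeqEst-1} and condition $\C_\zs{2})$ via the LAN property from \cite[Theorem 5.2]{BenAlayaetal2025}, then invokes Theorem \ref{Th.sec:AsOp.-1bis} with $k=1$ --- the lower bound tailored to the class $\Xi^{*}_\zs{T}$, whose normalisation $\m^{*}_\zs{T}(\theta)\,I_\zs{a}(\theta)$ travels with $\theta$ --- and exploits the cancellation $\m^{*}_\zs{T}(\theta)\,I_\zs{a}(\theta)\approx (\theta/\b_\zs{max})\,T\cdot a/(\sigma\theta)=I_\zs{a,*}\,T$, which holds uniformly over $\Theta$ by \eqref{sec:Sqe-MeanTime-1bis} and \eqref{sec:Fisher-Inf-1}, to convert that bound into $\underline{\lim}_\zs{T\to\infty}\,T\,\inf\,\sup_\zs{\theta\in\Theta}\E_\zs{\theta}(\wh{\theta}_\zs{\tau}-\theta)^{2}\ge 1/I_\zs{a,*}$. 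You bypass Theorem \ref{Th.sec:AsOp.-1bis} entirely: you embed $\Xi^{*}_\zs{T}$ into the fixed-budget local class $\cH_\zs{T}(\theta_\zs{0},\gamma)$ and apply Proposition \ref{Pr.sec:AsOp.-1} directly at the Fisher-information minimiser $\theta_\zs{0}\approx\b_\zs{max}$. This works precisely because the duration constraint inherited from $\Xi^{*}_\zs{T}$ is slackest (namely $\approx T$) exactly where $I_\zs{a}$ is smallest --- the same cancellation the paper uses, applied pointwise rather than uniformly. What each approach buys: yours is more elementary (no condition $\C_\zs{1})$, no appeal to \cite[Theorem 6.4]{BenAlayaetal2025}), but slightly less general, since it needs $\b_\zs{max}$ to be approximable by points $\theta_\zs{0}$ with $\{|\theta-\theta_\zs{0}|\le\gamma\}\subseteq\Theta$; this is automatic for intervals, as you note, but fails for a compact such as $\Theta=[1,2]\cup\{3\}$, where the paper's argument still goes through because Theorem \ref{Th.sec:AsOp.-1bis} localises at an arbitrary interior point while its $\theta$-dependent normalisation carries the bound to the whole of $\Theta$. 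Finally, your ``main obstacle'' dissolves on inspection: by construction $\tau^{*}_\zs{T}=\tau_\zs{H^{*}_\zs{T}}\wedge T\le T$ almost surely, so $\E_\zs{\theta}\tau^{*}_\zs{T}\le T$ exactly and the inclusion $\Xi^{*}_\zs{T}\subseteq\cH_\zs{T}(\theta_\zs{0},\gamma)$ holds with $\rho_\zs{T}=0$; no budget inflation or re-indexing of Proposition \ref{Pr.sec:AsOp.-1} is required, and the sandwich closes exactly as in the paper via \eqref{accuracy-H2-II} together with the observation that $(\tau^{*}_\zs{T},\theta^{*}_\zs{T})\in\Xi^{*}_\zs{T}$, so the ratio in \eqref{sec:AsOpt-SeqPr-1-b-!!} is automatically at most $1$.
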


\begin{proof}
	First note that the property \eqref{sec:Sqe-MeanTime-1bis} implies that for any $\theta\in\Theta$
	the expectation
	$\E_\zs{\theta} \tau^{*}_\zs{T}\to\infty$ as $T\to\infty$. 
	It should be also added that thanks to the property
	\eqref{sec:Sqe-MeanTime-1bis}
	the condition $\C_\zs{1})$ in Section \ref{sec:App-0} holds true. Moreover, the condition $\C_\zs{2})$
	for this case is established in
	\cite[Theorem 5.2.]{BenAlayaetal2025}. Therefore,  Theorem  \ref{Th.sec:AsOp.-1bis} with $k=1$ from Appendix
	implies that
	$$
	\lim_\zs{T\to\infty}
	\inf_\zs{(\tau, \wh{\theta}_\zs{\tau})\in\Xi^{*}_\zs{T}}\,
	\sup_\zs{ \theta\in\Theta}\, I_\zs{a}(\theta) \E_\zs{\theta}\tau^{*}_\zs{T}\,
	\E_\zs{\theta}\,(\wh{\theta}_\zs{\tau}-\theta)^{2}
	\ge 1\,.
	$$
	\noindent Again using here  the property \eqref{sec:Sqe-MeanTime-1bis}  and the form of the Fisher information
%	$I_\zs{a}(\theta)=a/(\sigma \theta)$
	defined in  \eqref{sec:Fisher-Inf-1} we obtain that
	$$
	\lim_\zs{T\to\infty}
	\inf_\zs{(\tau, \wh{\theta}_\zs{\tau})\in\Xi^{*}_\zs{T}}\,
	\sup_\zs{ \theta\in\Theta}\,T\,
	\E_\zs{\theta}\,(\wh{\theta}_\zs{\tau}-\theta)^{2}
	\ge \frac{\sigma \b_\zs{max}}{a}
	\,.
	$$
	\noindent  Now the bound \eqref{accuracy-H2} implies the optimality property \eqref{sec:AsOpt-SeqPr-1-b-!!}.
\end{proof}

\begin{remark}\label{Re.sec:OPt--1}
Il should be noted that  Theorem 2.2 and 
Theorem 5.2
 from \cite{BenAlayaetal2025} 
are shown under the condition $a>\sigma/2$. Indeed, these result hold true for any $a>0$.
\end{remark}

\subsection{Optimality properties for the procedure \eqref{sec;Opt-H-Seq-Pr-a}}

Now let us consider the properties of stopping time that determines the duration of  estimation in procedure
\eqref{sec;Opt-H-Seq-Pr-a}.

\begin{proposition}
	\label{Pr.sec:SeqEst-1-a}
	For any fixed $b>0$  any compact set $\Theta\subset ]\sigma/2,+\infty[$ the stopping time $\tau^{*}_\zs{T}$ defined in
	the  procedure  \eqref{sec;Opt-H-Seq-Pr-a} for any $r>0$
	satisfies the following asymptotic property  
	\begin{equation}
		\label{sec:Sqe-MeanTime-1bis-a}
		\lim_\zs{T\to\infty}\sup_\zs{\theta\in \Theta}\E_\zs{\theta}\left| \frac{\tau^{*}_\zs{T}}{T}-
		\frac{2\theta-\sigma}{2\a_\zs{max}-\sigma}
		\right|^{r}=0\,.
	\end{equation}
\end{proposition}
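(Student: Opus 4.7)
The plan is to mirror the argument of Proposition \ref{Pr.sec:SeqEst-1}, replacing the ergodic limit $\E_\zs{\theta} X = a/\theta$ by $\E_\zs{\theta} X^{-1} = 2b/(2\theta-\sigma)$ and the oracle constant $a/\b_\zs{max}$ by $\mu_\zs{a,*} = 2b/(2\a_\zs{max}-\sigma)$ from Remark \ref{Re.sec:non-as--11}. The key algebraic observation is the factorisation
$$
\frac{2\theta-\sigma}{2\a_\zs{max}-\sigma}
=
\mu_\zs{a,*}\cdot \frac{2\theta-\sigma}{2b}
\,,
$$
which expresses the desired limit of $\tau^{*}_\zs{T}/T$ as the product of the deterministic limit of $H^{*}_\zs{T}/T$ and the almost sure ergodic limit of $\tau_\zs{H}/H$.

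Three ingredients are required. First, the analogue of \eqref{sec:Sqe-MeanTime-1} for the stopping rule \eqref{eq:SMLE-a}, established in \cite{BenAlayaetal2025}, which gives
$$
\lim_\zs{H\to\infty}\sup_\zs{\theta\in\Theta}\,
\E_\zs{\theta}\left| \frac{\tau_\zs{H}}{H}-\frac{2\theta-\sigma}{2b}\right|^{r}
=0
\,.
$$
Second, the asymptotic representation \eqref{H1--a-opt} of Remark \ref{Re.sec:non-as--11}, which yields the deterministic convergence $H^{*}_\zs{T}/T\to \mu_\zs{a,*}$ and, in particular, $H^{*}_\zs{T}\to\infty$. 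Third, the concentration bound \eqref{sec-aa-tau-aa-bnd} combined with the explicit form \eqref{H1--a} of $H^{*}_\zs{T}$, which, under the condition $m>(1-2\delta)^{-1}$ from Corollary \ref{cor:b--aa}, provides a uniform polynomial decay
$\sup_\zs{\theta\in\Theta}\P_\zs{\theta}(\tau_\zs{H^{*}_\zs{T}}>T)\to 0$.

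Now exactly as in Proposition \ref{Pr.sec:SeqEst-1}, split on $\{\tau_\zs{H^{*}_\zs{T}}\le T\}$ and its complement and denote $\gamma_\zs{\theta}=(2\theta-\sigma)/(2\a_\zs{max}-\sigma)\in[0,1]$ to get
$$
\E_\zs{\theta}\left|\frac{\tau^{*}_\zs{T}}{T}-\gamma_\zs{\theta}\right|^{r}
\le
\E_\zs{\theta}\left|\frac{\tau_\zs{H^{*}_\zs{T}}}{T}-\gamma_\zs{\theta}\right|^{r}
+
|1-\gamma_\zs{\theta}|^{r}\,\P_\zs{\theta}(\tau_\zs{H^{*}_\zs{T}}>T)
\,.
$$
The second term vanishes uniformly on $\Theta$ by the third ingredient. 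For the first, writing $\tau_\zs{H^{*}_\zs{T}}/T=(\tau_\zs{H^{*}_\zs{T}}/H^{*}_\zs{T})(H^{*}_\zs{T}/T)$ and inserting $\pm \mu_\zs{a,*}(2\theta-\sigma)/(2b)$ gives a triangle inequality in which the two resulting terms are controlled by the first and second ingredients respectively (with $H=H^{*}_\zs{T}\to\infty$), and both $H^{*}_\zs{T}/T$ and $(2\theta-\sigma)/(2b)$ remain bounded on $\Theta$. The main technical obstacle is simply to verify that the $L^{r}$ convergence from \cite{BenAlayaetal2025} can be composed with the deterministic scaling $H^{*}_\zs{T}/T$ uniformly in $\theta$; this is immediate because $H^{*}_\zs{T}$ does not depend on $\theta$.
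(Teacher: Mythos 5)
Your proposal is correct and follows essentially the same route as the paper: the same three ingredients (the uniform $L^{r}$ convergence of $\tau_\zs{H}/H$ to $(2\theta-\sigma)/(2b)$ from \cite[Theorem 2.7]{BenAlayaetal2025}, the representation \eqref{H1--a-opt} for $H^{*}_\zs{T}/T$, and the tail bound $\sup_\zs{\theta\in\Theta}\P_\zs{\theta}(\tau_\zs{H^{*}_\zs{T}}>T)\to 0$ obtained from \eqref{sec-aa-tau-aa-bnd} and \eqref{H1--a}), combined via the same decomposition on $\{\tau_\zs{H^{*}_\zs{T}}\le T\}$ and its complement; you merely spell out the triangle-inequality composition $\tau_\zs{H^{*}_\zs{T}}/T=(\tau_\zs{H^{*}_\zs{T}}/H^{*}_\zs{T})(H^{*}_\zs{T}/T)$ that the paper leaves implicit. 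One cosmetic point: $\mu_\zs{a,*}$ as defined in \eqref{sec:Para-1-mu-2} depends on $T$ through $\r$ and only converges to $2b/(2\a_\zs{max}-\sigma)$ under the choices of Remark \ref{Re.sec:non-as--11}, but since you invoke \eqref{H1--a-opt} directly this does not affect the argument.
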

\begin{proof}
	First of all note that for the stopping time defined in
	\eqref{eq:SMLE-a}
	in \cite[Theorem 2.7]{BenAlayaetal2025} it is shown that for 
	any compact set $\Theta\subset ]0,+\infty[$ and any $r>0$
	\begin{equation}
		\label{sec:Sqe-MeanTime-1-a}
		\lim_\zs{H\to\infty}\sup_\zs{\theta\in \Theta}\E_\zs{\theta}\left| \frac{\tau_\zs{H}}{H}-\frac{2\theta-\sigma}{2b}\right|^{r}=0
		\,.
	\end{equation}
	
	\noindent 
	Note also that using the bound 
	\eqref{sec-aa-tau-aa-bnd}
	and the representation \eqref{H1--a}
	one can obtain that
	\begin{equation}
		\label{sec:Sqe-MeanTime-**-1-a}
		{	\P_\zs{\theta}}(\tau_\zs{H^{*}_\zs{T}}> T)=
		\oo\left(\frac{1}{T}\right)
		\quad\mbox{as}\quad T\to\infty \,.
	\end{equation}
	
	\noindent Now using this we obtain that for any $\theta$ from $\Theta$
	
	\begin{align*}
		\E_\zs{\theta}\left| \frac{\tau^{*}_\zs{T}}{T}-
		\frac{2\theta-\sigma}{2\a_\zs{max}-\sigma}
		\right|^{r}&= 
		\E_\zs{\theta}\left|  \frac{\tau_\zs{H^{*}_\zs{T}}}{T}-\frac{2\theta-\sigma}{2\a_\zs{max}-\sigma} \right|^{r}
		\Chi_\zs{\{\tau_\zs{H^{*}_\zs{T}} \le T\}}\\[2mm]
&+
		\E_\zs{\theta}\left| 1
		-\frac{2\theta-\sigma}{2\a_\zs{max}-\sigma} \right|^{r} \Chi_{\{\tau_\zs{H^{*}_\zs{T}} > T\}}\\[2mm]
		&\le \E_\zs{\theta}\left| \frac{\tau_\zs{H^{*}_\zs{T}}}{T}-\frac{2\theta-\sigma}{2\a_\zs{max}-\sigma}\right|^{r} + {	\P_\zs{\theta}}(\tau_\zs{H^{*}_\zs{T}}> T)\,.
	\end{align*}
	Then, the asymptotic equalities \eqref{H1--a-opt}, \eqref{sec:Sqe-MeanTime-1-a}
	and \eqref{sec:Sqe-MeanTime-**-1-a} yield the property
	\eqref{sec:Sqe-MeanTime-1bis-a}.
\end{proof}

\medskip

\begin{theorem}
	\label{Th.sec:SeqEst-1=a}
	For any fixed $b>0$ and  $\theta_\zs{0}>\sigma/2$ the sequential procedure  \eqref{sec;Opt-H-Seq-Pr-a} is point-wise optimal
	\begin{equation}
		\label{sec:OPT-EstPrs-1-a}
		\lim_\zs{\gamma\to 0}\,
		\underline{\lim}_\zs{T\to\infty}
		\frac{
			\inf_\zs{
				(\tau,\wh{\theta}_\zs{\tau})
				\in
				\cH_\zs{T}(\theta_\zs{0},\gamma)}
			\sup_\zs{|\theta-\theta_\zs{0}|<\gamma}\,
			\E_\zs{\theta}\big( \wh{\theta}_\zs{\tau}-\theta\big)^2}{\sup_\zs{|\theta-\theta_\zs{0}|<\gamma}\,
			\E_\zs{\theta}\big( \theta^{*}_\zs{T}-\theta\big)^2}
		=1\,,
	\end{equation}			
	\noindent where the class $\cH_\zs{T}(\theta_\zs{0},\gamma)$ is defined in \eqref{sec:AsOpt-1}.
\end{theorem}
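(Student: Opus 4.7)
The plan is to mirror the argument used for Theorem \ref{Th.sec:SeqEst-1} with the Fisher information $I_\zs{b}(\theta)$ from \eqref{sec:Fisher-Info--1-bb-0} replacing $I_\zs{a}(\theta)$. First I would invoke the LAN property: as established in \cite[Theorem 5.2]{BenAlayaetal2025}, the model \eqref{sec:Intr.CIR} satisfies the LAN condition for the parameter $a$ with normalizing coefficient $\sqrt{T}\,I_\zs{b}(\theta)$. Combined with Proposition \ref{Pr.sec:AsOp.-1} (applied with $k=1$), this yields for any $\theta_\zs{0}>\sigma/2$ and any sufficiently small $\gamma>0$ (so that $[\theta_\zs{0}-\gamma,\theta_\zs{0}+\gamma]\subset (\sigma/2,+\infty)$) the lower bound
\begin{equation*}
\underline{\lim}_\zs{T\to\infty}\,T\,
\inf_\zs{(\tau,\wh{\theta}_\zs{\tau})\in\cH_\zs{T}(\theta_\zs{0},\gamma)}
\sup_\zs{|\theta-\theta_\zs{0}|<\gamma}\E_\zs{\theta}(\wh{\theta}_\zs{\tau}-\theta)^{2}
\ge \frac{1}{I_\zs{b}(\theta_\zs{0})}\,.
\end{equation*}

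Next, for the matching upper bound I would use the asymptotic accuracy \eqref{accuracy-H2--opt-a}, which gives
\begin{equation*}
\overline{\lim}_\zs{T\to\infty}\,T\,
\sup_\zs{|\theta-\theta_\zs{0}|<\gamma}\,I_\zs{b,*}\,
\E_\zs{\theta}\big(\theta^{*}_\zs{T}-\theta\big)^2\le 1\,,
\end{equation*}
where $I_\zs{b,*}=\min_\zs{|\theta-\theta_\zs{0}|<\gamma}I_\zs{b}(\theta)$. Since the function $\theta\mapsto I_\zs{b}(\theta)=2b/(\sigma(2\theta-\sigma))$ is continuous and strictly positive on $(\sigma/2,+\infty)$, for any $0<\epsilon<1$ there exists $\gamma_\zs{0}>0$ such that $I_\zs{b,*}\ge (1-\epsilon)I_\zs{b}(\theta_\zs{0})$ for every $0<\gamma<\gamma_\zs{0}$. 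Dividing by $I_\zs{b,*}$ therefore yields
\begin{equation*}
\overline{\lim}_\zs{\gamma\to 0}
\overline{\lim}_\zs{T\to\infty}\,T\,
\sup_\zs{|\theta-\theta_\zs{0}|<\gamma}\,
\E_\zs{\theta}\big(\theta^{*}_\zs{T}-\theta\big)^2
\le \frac{1}{I_\zs{b}(\theta_\zs{0})}\,.
\end{equation*}

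Finally I would combine the two bounds: dividing the lower bound by the upper bound, both asymptotic constants equal $1/I_\zs{b}(\theta_\zs{0})$, so the ratio of the two suprema tends to $1$, which is exactly the equality \eqref{sec:OPT-EstPrs-1-a}. The main subtlety (rather than a true obstacle) is verifying that the LAN infrastructure from \cite{BenAlayaetal2025} applies verbatim in the scalar estimation of $a$ case (where $b$ is known); this was set up for the joint parameter model, but restricting to the one-dimensional submodel along the $a$-direction preserves the LAN property with the corresponding diagonal entry $I_\zs{b}(\theta)$ of the Fisher information. A minor care is also needed to ensure that the shrinking ball $[\theta_\zs{0}-\gamma,\theta_\zs{0}+\gamma]$ stays inside $(\sigma/2,+\infty)$, which is automatic once $\gamma<\theta_\zs{0}-\sigma/2$, and that the threshold $\r$ in the definition of $\theta^{*}_\zs{T}$ is chosen as in Remark \ref{Re.sec:non-as--11} so that \eqref{accuracy-H2--opt-a} is available.
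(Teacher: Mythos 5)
Your proposal is correct and follows essentially the same route as the paper: the LAN-based lower bound via Proposition \ref{Pr.sec:AsOp.-1} with $k=1$, the upper bound from \eqref{accuracy-H2--opt-a} combined with continuity of $I_\zs{b}(\theta)$ on shrinking balls $[\theta_\zs{0}-\gamma,\theta_\zs{0}+\gamma]$, and then matching the two constants $1/I_\zs{b}(\theta_\zs{0})$. The only cosmetic discrepancy is the citation — the scalar LAN for estimating $a$ with known $b$ is \cite[Theorem 5.3]{BenAlayaetal2025}, not Theorem 5.2 (which covers the $b$-case), so your worry about restricting the joint-parameter LAN to the $a$-direction is unnecessary since the reference states the scalar result directly.
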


\begin{proof}
	First note that as it is established in  \cite[Theorem 5.3.]{BenAlayaetal2025} the process \eqref{sec:Intr.CIR}
	for fixed $b>0$ and any $\theta>\sigma/2$
	satisfies the LAN condition with the  corresponding Fisher information $I_\zs{b}(\theta)$ defined in \eqref{sec:Fisher-Info--1-bb-0}. 
	Therefore, in view of Proposition \ref{Pr.sec:AsOp.-1}  for $k=1$
	we obtain that for any $\theta_\zs{0}>\sigma/2$ and
	for any  $\sigma/2<\gamma<\theta_\zs{0}$     
	\begin{equation}
		\label{sec:AsOpt-2-aa}
		\underline{\lim}_\zs{T\to\infty}
		\inf_\zs{(\tau,\wh{\theta}_\zs{\tau})\in\cH_\zs{T}(\theta_\zs{0},\gamma)}\,T
		\sup_\zs{| \theta-\theta_\zs{0}|<\gamma}\E_\zs{\theta}\,(\wh{\theta}_\zs{\tau}-\theta)^{2}
		\ge \frac{1}{I_\zs{b}(\theta_\zs{0})}\,.
	\end{equation}
	\noindent Moreover, it should be noted also that similarly to the bound
	\eqref{accuracy-H2--opt-b--11}
	using the inequality
	\eqref{accuracy-H2--opt-a} one can deduce the following upper bound for the mean square accuracy
	\begin{equation}  \label{accuracy-H2--opt-a--11}
		\overline{\lim}_\zs{T\to\infty}\,T\,
		\sup_{\theta\in\Theta}\,
		I_\zs{b}(\theta)\,
		\E_\zs{\theta}\big( \theta^{*}_\zs{T}-\theta\big)^2
		\le
		\frac{\max_\zs{\theta\in\Theta} I_\zs{b}(\theta)}{I_\zs{b,*}}
		=
		\frac{2\a_\zs{max}-\sigma}{2\a_\zs{min}-\sigma}\,.
	\end{equation}		
	\noindent Choosing here $\Theta=[\theta_\zs{0}-\gamma\,,\,\theta_\zs{0}{+}\gamma]$ for  $\sigma/2<\gamma<\theta_\zs{0}$
	and taking into account that for this set 
	$\a_\zs{max}/\a_\zs{min}\to 1$ as $\gamma\to 0$,
	we obtain that
	\begin{equation}
		\label{sec:Sqe-EstPrs-1-aa}
		\overline{\lim}_\zs{\gamma\to 0}
		\overline{\lim}_\zs{T\to\infty}\,T\,
		\sup_\zs{|\theta-\theta_\zs{0}|<\gamma}
		I_\zs{b}(\theta)\,
		\E_\zs{\theta}\big( \theta^{*}_\zs{T}-\theta\big)^2\le
		1\,.
	\end{equation}
	\noindent 
	Now from here through  the same way used in the inequality 
	\eqref{sec:UpBnd-bb-901} we obtain that
	$$
	\overline{\lim}_\zs{\gamma\to 0}
	\overline{\lim}_\zs{T\to\infty}\,T\,
	\sup_\zs{|\theta-\theta_\zs{0}|<\gamma}\,
	\E_\zs{\theta}\big( \theta^{*}_\zs{T}-\theta\big)^2
	\le
	\frac{1}{I_\zs{b}(\theta_\zs{0})}\,.
	$$
	\noindent From  this and the lower bound
	\eqref{sec:AsOpt-2-aa}
	it follows
	the property
	\eqref{sec:OPT-EstPrs-1-a}. Hence Theorem  \ref{Th.sec:SeqEst-1=a}.
\end{proof}

\noindent  Now using Proposition  \ref{Pr.sec:SeqEst-1-a} and the form of the Fisher information $I_\zs{b}$ given in   \eqref{sec:Fisher-Info--1-bb-0}
similarly to Theorem \ref{Th.sec:AsOp.-2} one can show the following result.
\begin{theorem}\label{Th.sec:AsOp.-2-aa}
	For any $b>0$ and any compact set $\Theta\subset ]\sigma/2,+\infty[$,  the sequential procedure
	\eqref{sec;Opt-H-Seq-Pr-a}	
	is asymptotically optimal in the minimax setting, i.e.
	\begin{equation}
		\label{sec:AsOpt-SeqPr-1-b-22}
		\lim_\zs{T\to\infty}
		\frac{\inf_\zs{(\tau,\wh{\theta_\zs{\tau}}) \in\Xi^{*}_\zs{T}}\,
			\sup_\zs{ \theta\in\Theta}\,\E_\zs{\theta}\,\,(\wh{\theta}_\zs{\tau}-\theta)^2}
		{\sup_\zs{ \theta\in\Theta}\,\E_\zs{\theta}\,(\theta^{*}_\zs{T}-\theta)^2}=1\,,
	\end{equation}
	\noindent where the class $\Xi^{*}_\zs{T}$ is defined in \eqref{sec;AssOpt-GlCls-1bis}  through the stopping time $\tau^{*}_\zs{T}$ introduced in	
	\eqref{sec;Opt-H-Seq-Pr-a}.	
\end{theorem}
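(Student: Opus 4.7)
The plan is to mirror the proof of Theorem~\ref{Th.sec:AsOp.-2}, replacing the ingredients specific to the estimation of $b$ by their counterparts for the estimation of $a$. First, I would invoke Proposition~\ref{Pr.sec:SeqEst-1-a} to conclude that for every $\theta\in\Theta$ the expectation $\E_\zs{\theta}\tau^{*}_\zs{T}\to\infty$ as $T\to\infty$, and more importantly that the normalised stopping time $\tau^{*}_\zs{T}/T$ converges in any moment sense, uniformly in $\theta\in\Theta$, to the deterministic limit $(2\theta-\sigma)/(2\a_\zs{max}-\sigma)$. Together with the continuity in $\theta$ of this limit, this verifies the condition $\C_\zs{1})$ of Section~\ref{sec:App-0} for the procedure~\eqref{sec;Opt-H-Seq-Pr-a}.

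Next, the LAN property required by the condition $\C_\zs{2})$ is supplied by \cite[Theorem 5.3]{BenAlayaetal2025} for fixed $b>0$ and $\theta>\sigma/2$, with Fisher information $I_\zs{b}(\theta)=2b/(\sigma(2\theta-\sigma))$ as recorded in \eqref{sec:Fisher-Info--1-bb-0}. With both conditions in hand, Theorem~\ref{Th.sec:AsOp.-1bis} of the Appendix, applied with $k=1$, delivers the local asymptotic minimax lower bound
\begin{equation*}
\lim_\zs{T\to\infty}\,
\inf_\zs{(\tau,\wh{\theta}_\zs{\tau})\in\Xi^{*}_\zs{T}}\,
\sup_\zs{\theta\in\Theta}\,I_\zs{b}(\theta)\,\E_\zs{\theta}\tau^{*}_\zs{T}\,
\E_\zs{\theta}(\wh{\theta}_\zs{\tau}-\theta)^{2}\ge 1.
\end{equation*}

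The key simplification is that the weight $I_\zs{b}(\theta)\E_\zs{\theta}\tau^{*}_\zs{T}$ is asymptotically independent of $\theta$: using the uniform moment limit furnished by Proposition~\ref{Pr.sec:SeqEst-1-a},
\begin{equation*}
I_\zs{b}(\theta)\,\E_\zs{\theta}\tau^{*}_\zs{T}=
\frac{2b}{\sigma(2\theta-\sigma)}\cdot\frac{2\theta-\sigma}{2\a_\zs{max}-\sigma}\,T\,(1+\oo(1))=I_\zs{b,*}\,T\,(1+\oo(1)),
\end{equation*}
uniformly on $\Theta$. Plugging this into the lower bound and invoking the matching upper bound \eqref{accuracy-H2--opt-a}, namely $\sup_\zs{\theta\in\Theta}\E_\zs{\theta}(\theta^{*}_\zs{T}-\theta)^{2}\le 1/(I_\zs{b,*} T)+\oo(1/T)$, the ratio in \eqref{sec:AsOpt-SeqPr-1-b-22} is sandwiched between a quantity tending to $1$ from below and a quantity tending to $1$ from above, which yields the claim.

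The step that requires the most care is the uniformity in $\theta\in\Theta$ of the moment convergence supplied by Proposition~\ref{Pr.sec:SeqEst-1-a}, since it is precisely this uniformity that permits the product $I_\zs{b}(\theta)\E_\zs{\theta}\tau^{*}_\zs{T}$ to be pulled out of the supremum and matched against the constant upper bound from \eqref{accuracy-H2--opt-a}. This uniformity is already encoded in the statement~\eqref{sec:Sqe-MeanTime-1bis-a}, so no new analytic input is required beyond what was already used in the proof of Theorem~\ref{Th.sec:AsOp.-2}.
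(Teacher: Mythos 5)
Your proposal is correct and takes essentially the same route as the paper, which proves Theorem \ref{Th.sec:AsOp.-2-aa} only by reference to the argument of Theorem \ref{Th.sec:AsOp.-2}: you correctly verify $\C_\zs{1})$ via Proposition \ref{Pr.sec:SeqEst-1-a} and $\C_\zs{2})$ via \cite[Theorem 5.3]{BenAlayaetal2025}, apply Theorem \ref{Th.sec:AsOp.-1bis} with $k=1$, exploit the key cancellation $I_\zs{b}(\theta)\,\E_\zs{\theta}\tau^{*}_\zs{T}=I_\zs{b,*}\,T\,(1+\oo(1))$ uniformly on $\Theta$, and match against the upper bound \eqref{accuracy-H2--opt-a}. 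The only step you leave implicit, namely that $(\tau^{*}_\zs{T},\theta^{*}_\zs{T})\in\Xi^{*}_\zs{T}$ so that the ratio in \eqref{sec:AsOpt-SeqPr-1-b-22} is trivially bounded above by $1$, is equally implicit in the paper's own proof of Theorem \ref{Th.sec:AsOp.-2}.
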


\begin{remark}
	\label{Re.sec:opt--1}
	It should be noted that the properties
\eqref{sec:Sqe-MeanTime-1bis} and \eqref{sec:Sqe-MeanTime-1bis-a} imply that	
for $T\to\infty$ the mean observations durations $\E_\zs{\theta} \tau^{*}_\zs{T}-T\to-\infty$ for $\theta\neq \b_\zs{max}$ and
  $\theta\neq \a_\zs{max}$ respectively. This means that  to obtain the optimality properties with respect to the non-sequential estimation based on the fixed observations  duration $T$
	 the procedures  \eqref{sec;Opt-H-Seq-Pr} and  \eqref{sec;Opt-H-Seq-Pr-a} essentially
	  reduce the duration of observations.
\end{remark}

\section{{Two-dimensional} truncated sequential estimation method}
\label{sec:MultPr}

\subsection{Guaranteed estimation}
\noindent
Now we develop a truncated sequential estimation method 
for the two dimension parameter 
$\theta=(a,b)^{\top}$ from some compact $\Theta\subset ]\sigma/2,+\infty[ \times ]0,\infty[$. 
For this problem, it is convenient to represent the process
\eqref{sec:Intr.CIR} 
as
\begin{equation}\label{eq:CIR2}
	\d X_\zs{t}=\g_\zs{t}^{\top}\theta \d t+\sqrt{\sigma X_\zs{t}}\d W_\zs{t}\,,\quad 0\le t\le T\,,
\end{equation}
where $\g_\zs{t}=\left(1\,,\,-X_\zs{t}\right)^{\top}$. Note that 
in view of the results from \cite{BAK2013}
in this case this process is ergodic and one can show that 
$\P_\zs{\theta}$ a.s.
for any $\theta\in ]\sigma/2,+\infty[ \times ]0,\infty[$
the following limit equalities hold true 
\begin{equation}\label{sec:Mlt.erg-prs-1}
	\lim_\zs{t\to\infty}
	\frac{1}{t} \int_\zs{0}^{t} X^{-1}_\zs{s}\d s
	= \frac{2b}{2a-\sigma}
	:=f_\zs{1}
	\quad\mbox{and}\quad
	\lim_\zs{t\to\infty}
	\frac{1}{t} \int_\zs{0}^{t}X_\zs{s}\d s
	=\frac{a}{b}:=f_\zs{2}\,.
\end{equation}
\noindent Therefore, setting
\begin{equation}
	\label{sec:MultPr-1}
	G_\zs{t}=\int_\zs{0}^tX^{-1}_\zs{s}\g_\zs{s}\g_\zs{s}^{\top}\d s=\left(\begin{array}{cc} \int_\zs{0}^tX^{-1}_\zs{s}\d s&-t\\[4mm]
		-t&\int_\zs{0}^t{X_\zs{s}}ds\end{array}\right)\,,
\end{equation}
\noindent we obtain that
\begin{equation}\label{sec:Mlt.cond-1}
	\lim_\zs{t\to\infty}
	\frac{1}{t}G_\zs{t}=F
	=
	\left(
	\begin{array}{cc}
		f_\zs{1} &-1\\[4mm]
		-1& f_\zs{2}
	\end{array}
	\right)
	\qquad\P_\zs{\theta}-\mbox{a.s.,}
\end{equation}

\noindent  where the matrix $F=F(\theta)$ is positive definite.   To estimate the parameters $\theta$ we use the sequential procedure 
introduced in \cite{BenAlayaetal2025}.
To do this  first we will use the family of stopping times $\big(	\t_\zs{z} \big)_\zs{z>0}$ defined as
\begin{equation}
	\label{sec:MultPr-1-00}
	\t_\zs{z}=\inf\left\{t\ge 0:\int_\zs{0}^{t}\,X^{-1}_\zs{s} \, |\g_\zs{s}|^2 \d s\ge z\right\}\,,
\end{equation}
\noindent provided that  $\inf\{\emptyset\}=+\infty$. It should be noted that the properties  \eqref{sec:Mlt.erg-prs-1}
imply directly that $\t_\zs{z}<\infty$ a.s. for any $z>0$. Now for any  non-random sequence
of non-decreasing positive numbers $(\kappa_\zs{n})_\zs{n\ge 1}$ for which
\begin{equation}
	\label{sec:MultPr-1-News-90432}
	\sum_\zs{n\ge 1}\,\frac{1}{\kappa_\zs{n}}<\infty
\end{equation}
\noindent
we define the sequential procedures 
$
\left(\t_\zs{n},\wh{\theta}_\zs{\t_\zs{n}}\right)_\zs{n\ge 1}$ as
\begin{equation}
	\label{sec:MlT-MLE-1}
	\t_\zs{n}=\t_\zs{\kappa_\zs{n}}
	\quad\mbox{and}\quad
	\wh{\theta}_\zs{\t_\zs{n}}=G^{+}_\zs{\t_\zs{n}}\int_\zs{0}^{\t_\zs{n}}\, X^{-1}_\zs{s} \g_\zs{s} \d X_\zs{s}
	\,.	
\end{equation}
\noindent Here the matrix $G^{+}=G^{-1}$ if the inverse matrix $G^{-1}$ exists and $G^{+}=0$ otherwise.  
The number of these estimates required to construct a two-step sequential procedure is determined by the following stopping time
\begin{equation}
	\label{sec:MultPr-b-nn-122}
	\upsilon_\zs{H}
	=
	%\upsilon_\zs{H}=
	\inf
	\left\{k\ge1:\sum_{n=1}^k\b_\zs{n}^2\ge H
	\right\}
	\,,
\end{equation}
\noindent where $H$  is a positive non-random threshold that will  be chosen below and

\begin{equation}
	\label{sec:MultPr-b-bnn-101}
	\b_\zs{n}=\frac{1}{| G^{-1}_\zs{\t_\zs{n}}|\, \kappa_\zs{n}}\,\Chi_\zs{\{ \lambda_\zs{min}(G_\zs{\t_\zs{n}})>0\}}
	\,.
\end{equation}
\noindent 
Here $|\cdot|$ denotes the  Euclidean  norm for the vectors and matrices and  $\lambda_\zs{min}(G)$ is the minimal eigenvalue of the matrix $G$. As is shown in \cite{BenAlayaetal2025}
for any $\theta\in ]\sigma/2,+\infty[ \times ]0,\infty[$
\begin{equation}
	\label{sec:MultPr-AsPrs-bb}
	\lim_\zs{n\to\infty} \b^{2}_\zs{n}=\b^{2}_\zs{*}=
	\frac{1}{( |F^{-1}|\,\tr \,F)^{2}}
%	= \left(\frac{a(2a-\sigma)}{\sigma b}+\frac{2b}{\sigma} \right)^{-2}\left(\frac{a^{2}}{b^{2}}+2+\frac{4b^{2}}{(2a-\sigma)^{2}} \right)^{-1}
	>0
	\qquad
	\P_\zs{\theta} -
	\quad\mbox{a.s.}
	\,,
\end{equation}
\noindent 
i.e. $\sum_\zs{n\ge 1} \b^{2}_\zs{n}=+\infty$
and, therefore,  for any $H>0$ the moment  \eqref{sec:MultPr-b-nn-122} is finite a.s.  Moreover,
setting 
\begin{equation}\label{u*}
	\u_\zs{*}=\max_\zs{\theta\in\Theta} \big(|F^{-1}|\, \tr \, F\big)^{2}\,,
	%=  \max_{(a,b)^\top\in\Theta}
	%\left(\frac{a(2a-\sigma)}{\sigma b}+\frac{2b}{\sigma} \right)^{2}		\left(\frac{a^{2}}{b^{2}}+2+\frac{4b^{2}}{(2a-\sigma)^{2}} \right)
\end{equation}

\noindent 
we chose the sequence $(\kappa_\zs{n})_\zs{n\ge 1}$ 
as
\begin{equation}
	\label{sec:seq-kappa}
	\kappa_\zs{n}
	=
	\left\{
	\begin{array}{cc}
		H\,,&\quad\mbox{for}\quad n\le \n^{*}_\zs{H}\,; \\[2mm]
		\kappa^{*}_\zs{n}\,,&
		\quad\mbox{for}\quad n > \n^{*}_\zs{H}\,,
	\end{array}
	\right.
\end{equation}
where $\n^{*}_\zs{H}= 2 \u_\zs{*} H$,
\noindent  and
$(\kappa^{*}_\zs{n})_\zs{n\ge 1}$ is an increasing sequence  such that  for all $n$ it is bounded from below as $\kappa^{*}_\zs{n}\ge n$ and for some
constants  $\varpi>1$ and $0<\delta^{*}<1/2$, 
\begin{equation}
	\label{sec:seq-kappa--0}
	\overline{\lim}_\zs{n\to\infty}\,n^{-\varpi}\,\kappa^{*}_\zs{n}\,<\,\infty
	\quad\mbox{and}\quad
	\overline{\lim}_\zs{n\to\infty}\,n^{-\delta^{*}}
	\sum^{n}_\zs{k=1}\,\frac{1}{\sqrt{\kappa^{*}_\zs{k}}}\,<\,\infty\,.
\end{equation}

\noindent  For example, we can take 
$\kappa^{*}_\zs{n}=n^{\varpi}$ and $\delta^{*}=(2-\varpi)/2$ for some $1<\varpi<2$.

\noindent
Now using this sequence one can show the following property for the moment  $\upsilon_\zs{H}$.
\begin{lemma}\label{Le.sec:App-33}
	For any compact set $\Theta\subset (\sigma/2,+\infty)\, \times\, (0,+\infty)$,  for any $r>1$ and $H>0$
	there exists some constant $\v^{*}_\zs{r}>0$ such that for any
	$n>\u_\zs{*} H$ 
	the distribution tail of the stopping time \eqref{sec:MultPr-b-nn-122} 
	can be estimated from above as
	\begin{equation}\label{sec:uppBnd-tail--0}
		\sup_\zs{\theta\in\Theta}\,
		\P_\zs{\theta}\,
		\left(
		\upsilon_\zs{H}
		>n
		\right)
		\le\,\v^{*}_\zs{r}\,
		\frac{(2\u_{*})^{2r}H^{r}+n^{2\delta^{*} r}}{(n-\u_\zs{*} H)^{2r}}\,,
	\end{equation}
	\noindent where  the costant  $0<\delta^{*}<1/2$ is given in the condition  \eqref{sec:seq-kappa--0}.
\end{lemma}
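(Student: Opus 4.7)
The plan is to reduce the event $\{\upsilon_\zs{H}>n\}$ to a large-deviation estimate for the centred partial sums
$$
S_\zs{n}=\sum_\zs{k=1}^{n}(\b^{2}_\zs{*}-\b^{2}_\zs{k}),
$$
and then to control $\E_\zs{\theta}|S_\zs{n}|^{2r}$ by splitting the sum at the break-point $\n^{*}_\zs{H}=2\u_\zs{*}H$ of the threshold sequence \eqref{sec:seq-kappa}. Since $\u_\zs{*}=\max_\zs{\theta\in\Theta}(|F^{-1}|\tr F)^{2}$ and $\b^{2}_\zs{*}=1/(|F^{-1}|\tr F)^{2}$, one has $\b^{2}_\zs{*}\ge 1/\u_\zs{*}$ uniformly on $\Theta$, so by \eqref{sec:MultPr-b-nn-122}
$$
\{\upsilon_\zs{H}>n\}=\Bigl\{\sum_\zs{k=1}^{n}\b^{2}_\zs{k}<H\Bigr\}\subseteq\Bigl\{S_\zs{n}>\tfrac{n}{\u_\zs{*}}-H\Bigr\}.
$$
For $n>\u_\zs{*}H$, Markov's inequality of order $2r$ then yields
$$
\P_\zs{\theta}(\upsilon_\zs{H}>n)\le\frac{\u_\zs{*}^{2r}\,\E_\zs{\theta}|S_\zs{n}|^{2r}}{(n-\u_\zs{*}H)^{2r}}.
$$

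Next I split $S_\zs{n}=S^{(1)}_\zs{n}+S^{(2)}_\zs{n}$, where $S^{(1)}_\zs{n}$ collects the indices $k\le\n^{*}_\zs{H}$ and $S^{(2)}_\zs{n}$ those with $k>\n^{*}_\zs{H}$. By \eqref{sec:seq-kappa}, for $k\le\n^{*}_\zs{H}$ the threshold $\kappa_\zs{k}$ equals the constant $H$, so $\t_\zs{k}=\t_\zs{H}$ and $\b_\zs{k}$ equals the single random variable $\b(H):=(|G^{-1}_\zs{\t_\zs{H}}|H)^{-1}\Chi_\zs{\{\lambda_\zs{min}(G_\zs{\t_\zs{H}})>0\}}$, giving $S^{(1)}_\zs{n}=(n\wedge\n^{*}_\zs{H})(\b^{2}_\zs{*}-\b(H)^{2})$. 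The crucial ingredient here is the $L^{2r}$-rate
$$
\sup_\zs{\theta\in\Theta}\E_\zs{\theta}|\b^{2}_\zs{*}-\b(H)^{2}|^{2r}\le C_\zs{r}\,H^{-r},
$$
which I would derive by combining the pointwise limits $\t_\zs{H}/H\to 1/\tr F$ and $G_\zs{\t_\zs{H}}/\t_\zs{H}\to F$ with the CIR concentration inequalities of Section~\ref{sec:CoIn}, composed with the smooth functional $G\mapsto(|G^{-1}|\kappa)^{-2}$. Using $(n\wedge\n^{*}_\zs{H})^{2r}\le(2\u_\zs{*}H)^{2r}$ then gives $\sup_\zs{\theta\in\Theta}\E_\zs{\theta}|S^{(1)}_\zs{n}|^{2r}\le C_\zs{r}(2\u_\zs{*})^{2r}H^{r}$, i.e., the first term in \eqref{sec:uppBnd-tail--0}.

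For the second block, the same reasoning applied to $\t_\zs{k}=\t_\zs{\kappa^{*}_\zs{k}}$ should yield the pointwise bound $\sup_\zs{\theta\in\Theta}\E_\zs{\theta}|\b^{2}_\zs{*}-\b^{2}_\zs{k}|^{2r}\le C_\zs{r}/(\kappa^{*}_\zs{k})^{r}$, and Minkowski's inequality then gives
$$
\sup_\zs{\theta\in\Theta}\bigl(\E_\zs{\theta}|S^{(2)}_\zs{n}|^{2r}\bigr)^{1/(2r)}\le C_\zs{r}^{1/(2r)}\sum_\zs{k=1}^{n}\frac{1}{\sqrt{\kappa^{*}_\zs{k}}}\le C\,n^{\delta^{*}}
$$
by the second condition in \eqref{sec:seq-kappa--0}. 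Combining the two blocks through $(a+b)^{2r}\le 2^{2r-1}(a^{2r}+b^{2r})$ and substituting into the Markov bound produces \eqref{sec:uppBnd-tail--0} with a suitable constant $\v^{*}_\zs{r}$.

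The main technical obstacle is establishing the $L^{2r}$ rate $\sup_\zs{\theta}\E_\zs{\theta}|\b(H)^{2}-\b^{2}_\zs{*}|^{2r}=\OO(H^{-r})$ and its analogue for the individual $\b_\zs{k}$ uniformly on the compact $\Theta$. This requires propagating the concentration inequalities for the ergodic integrals $\int_\zs{0}^{t}X_\zs{s}^{-1}\d s$ and $\int_\zs{0}^{t}X_\zs{s}\d s$ through the nonlinear functional $G\mapsto|G^{-1}|^{-2}$ evaluated at the random stopping time $\t_\zs{H}$, and simultaneously controlling the degeneracy event $\{\lambda_\zs{min}(G_\zs{\t_\zs{H}})=0\}$ that enters through the indicator in \eqref{sec:MultPr-b-bnn-101}. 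Uniformity in $\theta\in\Theta$ should follow from taking suprema in the CIR concentration bounds of Section~\ref{sec:CoIn} and the auxiliary estimates of Appendix~\ref{sec:Appendix}.
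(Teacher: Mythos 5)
Your reduction is structurally sound and in fact mirrors the paper's own proof: the inclusion $\{\upsilon_\zs{H}>n\}\subseteq\{S_\zs{n}>n/\u_\zs{*}-H\}$ via $\b^{2}_\zs{*}\ge 1/\u_\zs{*}$, Chebyshev--Markov of order $2r$, the observation that $\kappa_\zs{k}\equiv H$ for $k\le \n^{*}_\zs{H}$ so that the first block collapses to a single variable, and the use of the second condition in \eqref{sec:seq-kappa--0} to produce the $n^{2\delta^{*}r}$ term. (The paper does not split the random sum into blocks; it applies one H\"older step to $\sum_\zs{k\le n}|\psi_\zs{k}|/\sqrt{\kappa_\zs{k}}$ with $\psi_\zs{k}=\sqrt{\kappa_\zs{k}}\,(\b^{2}_\zs{k}-\b^{2}_\zs{*})$ and only then splits the deterministic sum $\sum_\zs{k\le n}1/\sqrt{\kappa_\zs{k}}$; your Minkowski variant is an immaterial difference.) But the estimate you defer as ``the main technical obstacle'' --- the uniform rate $\sup_\zs{k}\sup_\zs{\theta\in\Theta}\kappa^{r}_\zs{k}\,\E_\zs{\theta}\,|\b^{2}_\zs{k}-\b^{2}_\zs{*}|^{2r}<\infty$ --- is not a side issue to be sketched: it is the entire analytic content of the lemma, and your proposal does not prove it. Everything you do prove is routine.

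The paper closes exactly this step by invoking Lemma \ref{Le.sec:App-2} (Lemma A7 of \cite{BenAlayaetal2025}), which is a moment bound \emph{at the stopping time}: $\sup_\zs{z\ge 1}\sup_\zs{\theta\in\Theta}\E_\zs{\theta}\big(\sqrt{z}\,\big|G_\zs{\t_\zs{z}}/z-F/\tr F\big|\big)^{r}<\infty$. From it the rate follows in three short moves: since $\b_\zs{n}\le 1$ and $\b_\zs{*}\le 1$, one has $|\b^{2}_\zs{n}-\b^{2}_\zs{*}|\le 2|\b_\zs{n}-\b_\zs{*}|$; on $\{\lambda_\zs{min}(G_\zs{\t_\zs{n}})>0\}$ the functional is $1$-Lipschitz, $|\b_\zs{n}-\b_\zs{*}|\le |\D_\zs{n}|$ with $\D_\zs{n}=G_\zs{\t_\zs{n}}/\kappa_\zs{n}-F/\tr F$; and the degeneracy event in \eqref{sec:MultPr-b-bnn-101} is absorbed via $\P_\zs{\theta}(\lambda_\zs{min}(G_\zs{\t_\zs{n}})=0)\le\P_\zs{\theta}(|\D_\zs{n}|\ge\lambda_\zs{*})\le \d_\zs{*}\lambda^{-r}_\zs{*}\kappa^{-r/2}_\zs{n}$, where $\lambda_\zs{*}=\min_\zs{\theta\in\Theta}\lambda_\zs{min}(F)/\tr F>0$. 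Your suggested derivation --- composing the concentration inequalities of Section \ref{sec:CoIn} with the pointwise limits $\t_\zs{H}/H\to 1/\tr F$ and $G_\zs{\t_\zs{H}}/\t_\zs{H}\to F$ --- would fail as stated: Theorems \ref{Th.sec:CoIn-00} and \ref{Th.sec:CoIn-2} control deviations of $\int^{T}_\zs{0}$ at \emph{deterministic} horizons, whereas here $G$ is evaluated at the random time $\t_\zs{\kappa_\zs{n}}$, and upgrading fixed-time deviations to a $\sqrt{\kappa_\zs{n}}$-rate moment bound at the stopping time (which requires simultaneously controlling the fluctuations of $\t_\zs{\kappa_\zs{n}}$ itself, uniformly over the compact $\Theta$) is precisely the non-elementary content of \eqref{sec:uppBnd-Dev-1}; a.s.\ limits alone give no rate and no uniform moments. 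With Lemma \ref{Le.sec:App-2} cited (or reproved), your outline completes; without it, the proof has a genuine hole at its core.
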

\noindent The proof of this lemma is given in Appendix.

\noindent
In this case  we define the aggregated sequential estimation procedure $(\tau_\zs{H},\ov{\theta}_\zs{H})$ as

\begin{equation}
	\label{sec:two-step-!}
	\tau_\zs{H}=\t_\zs{\upsilon_\zs{H}}
	\quad\mbox{and}\quad
	\ov{\theta}_\zs{H}
	=
	\left( \sum_{n=1}^{\upsilon_\zs{H}}\,\b_\zs{n}^2\right)^{-1}\, \sum_{n=1}^{\upsilon_\zs{H}}\, \b^{2}_\zs{n}\, \wh{\theta}_\zs{\t_\zs{n}}
	\,.
\end{equation}
\noindent 
In this paper we use the truncated version of this procedure
$(\wt{\tau}_\zs{H,T},\wt{\theta}_\zs{H,T})$ in which
\begin{equation}\label{eq:TrunEst-ab}
	\wt{\tau}_\zs{H,T}=\tau_\zs{H}\wedge T\quad\textrm{and}\quad\wt{\theta}_\zs{H,T}=\ov{\theta}_\zs{H}\ \Chi_{\{\tau_\zs{H}\le T\}}.
\end{equation}

\noindent	 
Now, we study this procedure in non-asymptotic setting, i.e. for arbitraire  fixed $H>0$ and $T\ge  1$. To do this we need the following 
functionals

\begin{equation}\label{muu-03-01}
	\mu_\zs{\theta}=
%	\frac{a}{b}+\mu_\zs{1,\theta}	=
	\frac{a}{b}
	+	
	\int_\zs{\bbr_\zs{+}}\,\varphi(x)\,
	\q_\zs{\theta}(x)\d x
	\quad\mbox{and}\quad
	\mu_\zs{*}=\inf_{\theta\in\Theta}\, \mu_\zs{\theta}\,,
\end{equation}
where the ergodic density $\q_\zs{\theta}$  is defined in
\eqref{ergdenssityCIR} and the function
$\varphi(x)=\min\big(x^{-1},\r\big)$ in which the threshold $\r\ge 1$ will be chosen later.

\begin{theorem}
	\label{Th.sec:MultPr-343-1}
	For   any compact set $\Theta\subset(\sigma/2,+\infty)\, \times\, (0,+\infty)$, for any duration of observations $T> 1/\mu_\zs{*}$,
for any parameter	$1\le H<\mu_\zs{*}T$	
	and $m\ge 2$  the sequential procedure  \eqref{eq:TrunEst-ab} possesses the fixed guaranteed estimation accuracy, i.e. 
	\begin{equation}
		\label{sec:MultPr--fxd-ac-00}
		\sup_\zs{\theta\in\Theta}\,
		\E_\zs{\theta}\,|\wt{\theta}_\zs{H,T}-\theta|^2
		\,\le
		\frac{(2\u_\zs{*}+\rho^{*}_\zs{H}) \sigma}{H}+\frac{T^m\,\theta_\zs{max} \r^{2m} Z_\zs{m}}{(\mu_\zs{*}T -H)^{2m}}
		+
		\, \frac{2^{4m+1}\v^{*}_\zs{2m}{\theta_\zs{max}}}{H^{2m}}
		\,,
	\end{equation}
	where $\rho^{*}_\zs{H}=\sum_\zs{n> \n^{*}_\zs{H}} \big(\kappa^{*}_\zs{n}\big)^{-1}$,
	$\theta_\zs{max}=\max_\zs{\theta\in\Theta} |\theta|^{2}$, the coefficient $\v^{*}_\zs{2m}$ is given in Lemma \ref{Le.sec:App-33}
	and
	$$
	Z_\zs{m}=
	2^{2m}
	\L_\zs{m}
	\left(\frac{1}{ \b^{2m}_\zs{min}}+
	\frac{1}{\sigma^{2m}}\left(\frac{4\,e^{\beta_\zs{max}}}{\alpha_\zs{min}}+\frac{2^{\alpha_\zs{max}}\Gamma_\zs{max}}
	{\beta_\zs{min}^{\alpha_\zs{min}}\wedge\beta_\zs{min}^{\alpha_\zs{max}}}+\,
	\frac{{2^{\alpha_\zs{max}}}\,}{ \beta_\zs{min}}\right)^{2m}
	\right)
	\,.
	$$	   
\end{theorem}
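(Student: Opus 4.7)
The plan is to decompose the squared error using the definition of the truncated estimator in \eqref{eq:TrunEst-ab}:
\[
\E_\zs{\theta}|\wt{\theta}_\zs{H,T}-\theta|^2 = \E_\zs{\theta}|\ov{\theta}_\zs{H}-\theta|^2 \Chi_\zs{\{\tau_\zs{H}\le T\}} + |\theta|^2 \P_\zs{\theta}(\tau_\zs{H}>T)
\le \E_\zs{\theta}|\ov{\theta}_\zs{H}-\theta|^2 + \theta_\zs{max}\P_\zs{\theta}(\tau_\zs{H}>T).
\]
The first term will be bounded by invoking the non-asymptotic guarantee for the untruncated aggregated procedure $(\tau_\zs{H},\ov{\theta}_\zs{H})$ from \cite{BenAlayaetal2025}; with the present choice \eqref{sec:seq-kappa} of $(\kappa_\zs{n})$ the residual tail $\rho^{*}_\zs{H}=\sum_\zs{n>\n^{*}_\zs{H}} 1/\kappa^{*}_\zs{n}$ enters naturally and produces the first term $(2\u_\zs{*}+\rho^{*}_\zs{H})\sigma/H$.

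To control $\P_\zs{\theta}(\tau_\zs{H}>T)$, I would split on the event $\{\upsilon_\zs{H}\le \n^{*}_\zs{H}\}$ and its complement. On $\{\upsilon_\zs{H}\le \n^{*}_\zs{H}\}$, the construction \eqref{sec:seq-kappa} gives $\kappa_\zs{\upsilon_\zs{H}}=H$, hence $\tau_\zs{H}=\t_\zs{H}$ in the notation \eqref{sec:MultPr-1-00}. Because $|\g_\zs{s}|^2=1+X_\zs{s}^2$, we have $X_\zs{s}^{-1}|\g_\zs{s}|^2=X_\zs{s}^{-1}+X_\zs{s}\ge \varphi(X_\zs{s})+X_\zs{s}$, so
\[
\{\t_\zs{H}>T\}\subseteq \Big\{\int_\zs{0}^T(\varphi(X_\zs{s})+X_\zs{s})\d s<H\Big\}
=\{\mu_\zs{\theta}T+\Delta_\zs{T}(\varphi)+\D_\zs{T}<H\},
\]
with $\Delta_\zs{T}(\varphi)$ as in \eqref{Deviation-2} and $\D_\zs{T}$ as in the proof of Theorem \ref{Th.thm:b-1}. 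For $H<\mu_\zs{*}T$, Chebyshev's inequality with the elementary bound $(x+y)^{2m}\le 2^{2m-1}(x^{2m}+y^{2m})$ reduces matters to the two concentration inequalities \eqref{sec:CoIn-D-T-2-1} and \eqref{sec:CoIn-2-1}, already employed in Theorems \ref{Th.thm:b-1} and \ref{Th.thm:est-a-1}; reassembling the constants yields the second term $T^m\theta_\zs{max}\r^{2m}Z_\zs{m}/(\mu_\zs{*}T-H)^{2m}$.

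On the complementary event $\{\upsilon_\zs{H}>\n^{*}_\zs{H}\}=\{\upsilon_\zs{H}>2\u_\zs{*}H\}$, I would apply Lemma \ref{Le.sec:App-33} with $r=2m$ and $n=2\u_\zs{*}H$. The denominator becomes $(\u_\zs{*}H)^{4m}$, and since $\delta^{*}<1/2$ gives $4m(1-\delta^{*})>2m$, for $H\ge 1$ the $n^{2\delta^{*}r}$ contribution is absorbed into the $H^{-2m}$ rate. Careful bookkeeping of the constants produces the third term $2^{4m+1}\v^{*}_\zs{2m}/H^{2m}$ after multiplying by $\theta_\zs{max}$. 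Adding the three contributions yields \eqref{sec:MultPr--fxd-ac-00}.

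The main obstacle is the constant-tracking in step two: one must propagate the $(2m-1)$-power-mean inequality through both centred deviations, then combine $\L_\zs{m}/\b_\zs{min}^{2m}$ and the $\r^{2m}$-dependent factor from \eqref{sec-aa-tau-aa-bnd} into the single coefficient $Z_\zs{m}$ displayed in the statement, while simultaneously verifying that $\mu_\zs{*}>0$ so that the bound $H<\mu_\zs{*}T$ is meaningful on the given parameter set. The rest of the proof is a straightforward reduction to results already available in the paper and in \cite{BenAlayaetal2025}.
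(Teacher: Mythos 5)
Your proposal is correct and follows essentially the same route as the paper's own proof: the same error decomposition, the same split of $\P_\zs{\theta}(\tau_\zs{H}>T)$ on the event $\{\upsilon_\zs{H}\le \n^{*}_\zs{H}\}$ (where $\kappa_\zs{\upsilon_\zs{H}}=H$), Chebyshev with the two concentration inequalities \eqref{sec:CoIn-D-T-2-1} and \eqref{sec:CoIn-2-1} for the first piece, and Lemma \ref{Le.sec:App-33} with $r=2m$ at $n=\n^{*}_\zs{H}=2\u_\zs{*}H$ for the second. The only cosmetic difference is that you cite the variance bound for $\ov{\theta}_\zs{H}$ from \cite{BenAlayaetal2025}, whereas the paper re-derives it via the representation $\wh{\theta}_\zs{\t_\zs{n}}=\theta+\sqrt{\sigma}\,G^{-1}_\zs{\t_\zs{n}}\eta_\zs{\t_\zs{n}}$, $\E_\zs{\theta}|\xi_\zs{n}|^{2}\le 1/\kappa_\zs{n}$ and Cauchy--Schwarz, which yields exactly your $(2\u_\zs{*}+\rho^{*}_\zs{H})\sigma/H$.
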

\begin{proof} First, note that  that on the set $\{\lambda_\zs{min}(G_\zs{\t_\zs{n}}) >0\}$ the sequential MLE  
	\eqref{sec:MlT-MLE-1} can be represented as
	\begin{equation}
		\label{sec:MlSe-98-1}
		\wh{\theta}_\zs{\t_\zs{n}}
		=G_\zs{\t_\zs{n}}^{-1}\int_\zs{0}^{\t_\zs{n}}\,X^{-1}_\zs{s} \g_\zs{s} \d X_\zs{s}=\theta+
		\sqrt{\sigma}\,
		G_\zs{\t_\zs{n}}^{-1}\,
		\eta_\zs{\t_\zs{n}}
		\quad\mbox{and}\quad
		\eta_\zs{\t_\zs{n}}=
		\int_\zs{0}^{\t_\zs{n}}\,
		X^{-1/2}_\zs{s}
		\,
		\g_\zs{s}
		\d W_\zs{s}\,.
	\end{equation}
	
	\noindent Using here the definition \eqref{sec:MultPr-1-00} and the properties of the stochastic integrals we obtain that
	\begin{equation}
		\label{sec:MlSe-98-22}
		\E_\zs{\theta}\, |\eta_\zs{\t_\zs{n}}|^{2}
		=
		\E_\zs{\theta}\,
		\int_\zs{0}^{\t_\zs{n}}\,
		X^{-1}_\zs{s}
		\,
		|\g_\zs{s}|^{2}
		\d s
		=\kappa_\zs{n}\,.
	\end{equation}
	
	\noindent Furthermore, in view of \eqref{sec:MlSe-98-1} we can represent the estimator \eqref{eq:TrunEst-ab} in the following form
	\begin{equation}
		\label{sec:Multi-Par--1098}
		\wt{\theta}_\zs{H,T}
		=\frac{\sum_{n=1}^{\upsilon_\zs{H}}\b^{2}_\zs{n}\,\wh{\theta}_\zs{\t_\zs{n}}}{\sum_{n=1}^{\upsilon_\zs{H}}\b_\zs{n}^2}\ \Chi_\zs{\{\tau_\zs{H}\le T\}}=\left(\theta+
		\sqrt{\sigma}\,
		\frac{\sum_{n=1}^{\upsilon_\zs{H}}\b_\zs{n}\,\xi_\zs{n}}{\sum_{n=1}^{\upsilon_\zs{H}}\b_\zs{n}^2}\right)\Chi_\zs{\{\tau_\zs{H}\le T\}}
	\end{equation}
	\noindent  and	$\xi_\zs{n}=\b_\zs{n}G_\zs{\t_\zs{n}}^{-1}\eta_\zs{\t_\zs{n}}$.	
Note now that
	\begin{align}\label{eq:varalbe}
		\E_\zs{\theta}\,\left|
		\wt{\theta}_\zs{H,T}
		-\theta
		\right|^{2}
		\le\, \E_\zs{\theta}\,\left|
		\ov{\theta}_\zs{H}
		-\theta
		\right|^{2}\ +|\theta|^2\ {	\P_\zs{\theta}}(\tau_\zs{H}> T).
	\end{align}
	\noindent On the one hand, taking into account here the definition
	\eqref{sec:MultPr-b-bnn-101} and the  property \eqref{sec:MlSe-98-22}, we get that
	$$
	\E_\zs{\theta}\, |\xi_\zs{n}|^{2}\le
	\frac{1}{\kappa^{2}_\zs{n}}\,
	\E_\zs{\theta}\, |\eta_\zs{\t_\zs{n}}|^{2}
	=\frac{1}{\kappa_\zs{n}}\,.
	$$
	\noindent Then, from here through the 
	Cauchy-Schwarz-Bunyakovsky inequality and the definition
	\eqref{sec:MultPr-b-nn-122},
	we find
	\begin{align}\label{eq:varalbe-bis}
		\E_\zs{\theta}\,\left|
		\ov{\theta}_\zs{H}
		-\theta
		\right|^{2}
		\le\,
		\sigma\,
		\E_\zs{\theta} 
		\frac{\sum_{n=1}^{\upsilon_\zs{H}}|\xi_\zs{n}|^{2}}{\sum_{n=1}^{\upsilon_\zs{H}}\b_\zs{n}^2}
		\le
		\sigma\,\frac{1}{H}\,
		\sum_\zs{n\ge 1}\,
		\E_\zs{\theta}\,|\xi_\zs{n}|^{2}\le
		\sigma\,\frac{1}{H}\,
		\sum_\zs{n\ge 1}\,\frac{1}{\kappa_\zs{n}}
		\,,
	\end{align}
	\noindent   where in view of the definition  \eqref{sec:seq-kappa}
	$$
	\sum_\zs{n\ge 1}\,\frac{1}{\kappa_\zs{n}}
	\le 
	2\u_\zs{*}
	+
	\sum_\zs{n> \n^{*}_\zs{H}}\,\frac{1}{\kappa^{*}_\zs{n}}
	= 2\u_\zs{*}
	+ \rho^{*}_\zs{H}\,.
	$$   
	\noindent  On the other hand, we have
	\begin{align*}
		\P_\zs{\theta} (\tau_\zs{H}>T)
		&=\P_\zs{\theta} (\t_\zs{\upsilon_\zs{H}}>T)
		=\P_\zs{\theta}\left(\int^{T}_\zs{0} \big( X_\zs{s}+ X^{-1}_\zs{s} \big)\d s<\kappa_\zs{\upsilon_\zs{H}}\right)
		\\[2mm]
		&\le 
		\P_\zs{\theta}\left(\int^{T}_\zs{0}\big( X_\zs{s}+ \varphi(X_\zs{s})\big)\d s<\kappa_\zs{\upsilon_\zs{H}}\right)\,,
	\end{align*}	
	\noindent where $x+\varphi(x)=x+\min(x^{-1},\r )\le x+ x^{-1}$. Therefore,
	\begin{equation}
		\label{ineq:prob-tau3}
		\P_\zs{\theta} (\tau_\zs{H}>T)
		\le 
		\P_\zs{\theta}\left(
		\int^{T}_\zs{0}
		\big(X_\zs{s}+
		\varphi(X_\zs{s})
		\big) 
		\d s<H\right)
		+\P_\zs{\theta}\left({\upsilon_\zs{H}>\n^{*}_\zs{H}}\right)\,.
	\end{equation}
	
	\noindent 
	Now, similarly to 
	\eqref{ineq:prob-tau}
	and
	\eqref{Deviation-2}
	we set
	\begin{equation}
		\label{Deviation-3}
		\D_\zs{T}=
		\int_\zs{0}^T
		\left(
		X_\zs{s}- \frac{a}{b}
		\right) \d s
		\quad\mbox{and}\quad
		\Delta_\zs{T}
		=\int^{T}_\zs{0}\,\left(
		\varphi(X_\zs{s})
		-
		\mu_\zs{1,\theta}
		\right)\,\d s\,,
	\end{equation}
	\noindent where
	$
	\mu_\zs{1,\theta}	=
	\int_\zs{\bbr_\zs{+}}\,\varphi(x)\,
	\q_\zs{\theta}(x)\d x$.
	Using these deviations and the definitions
	\eqref{muu-03-01}  the  integral in \eqref{ineq:prob-tau3}  can be estimated from below 
	for any $\theta\in\Theta$
	as
	$$
	\int^{T}_\zs{0}
	\big(X_\zs{s}+
	\varphi(X_\zs{s})
	\big) 
	\d s
	=\mu_\zs{\theta} T
	+\D_\zs{T}
	+\Delta_\zs{T}
	\ge \mu_\zs{*} T
	+\D_\zs{T}
	+\Delta_\zs{T}\,.
	$$
	
	\noindent
Therefore, the first term in the r.h.s. of \eqref{ineq:prob-tau3} for
	$1<H<\mu_\zs{*}T$ and $\theta\in\Theta$  can be estimated through the Chebyshev's inequality
	for  $m>1$
	as
	\begin{align*}
		\P_\zs{\theta}
		\left(
		\int^{T}_\zs{0}
		\big(X_\zs{s}+
		\varphi(X_\zs{s})
		\big) 
		\d s<H\right)
		&\le
		\P_\zs{\theta}\Big(
		\mu_\zs{*} T
		+\D_\zs{T}
		+\Delta_\zs{T}
		<H\Big)
		\\[2mm]
		&\le 
		\P_\zs{\theta}\Big(
		|\D_\zs{T}|
		+|\Delta_\zs{T}|
		>\mu_\zs{*} T -H\Big)\\[2mm]
		&\le 2^{2m}
		\frac{\E_\zs{\theta}\, |\D_\zs{T}|^{2m}
			+\E_\zs{\theta}\, |\Delta_\zs{T}|^{2m}}{\big( \mu_\zs{*} T -H\big)^{2m}}\,.
	\end{align*}
	
	\noindent Using here inequalities
	\eqref{sec:CoIn-D-T-2-1}
	and \eqref{sec:CoIn-2-1}
	we obtain that
	\begin{align*}
		\sup_\zs{\theta\in\Theta}\,
		\P_\zs{\theta}&
		\left(
		\int^{T}_\zs{0}
		\big(X_\zs{s}+
		\varphi(X_\zs{s})
		\big) 
		\d s<H\right)
		\le 2^{2m}
		\left(
		\frac{1}{ \b^{2m}_\zs{min}}
		+
		%\E_\zs{\theta}\, |\Delta_\zs{T}|^{2m}
		\frac{\r^{2m}}{\sigma^{2m}}
		\s^{*}_\zs{m}
		\right)
		\frac{\L_\zs{m}T^{m}}{\big( \mu_\zs{*} T -H\big)^{2m}}\,,
	\end{align*}
	\noindent where
	$$
	\s^{*}_\zs{m} = \sup_\zs{\theta\in\Theta} 
		\left(\frac{4\,e^{\beta }}{\alpha}+\frac{2^{\alpha}\Gamma(\alpha)}{ \beta^{\alpha}}
		+\,
		\frac{2^{\alpha}\,}{ \beta}\right)^{2m}\,.
	$$

	 Taking into account here that $\r\ge 1$ we can obtain that
	
	\begin{equation}
		\label{sec:tail-est-1}
		\P_\zs{\theta}
		\left(
		\int^{T}_\zs{0}
		\big(X_\zs{s}+
		\varphi(X_\zs{s})
		\big) 
		\d s<H\right)
		\le 
		\frac{\r^{2m}\,Z_\zs{m}T^{m}}{\big( \mu_\zs{*} T -H\big)^{2m}}\,,
	\end{equation}
	\noindent where the coefficient $Z_\zs{m}$ is defined in the bound \eqref{sec:MultPr--fxd-ac-00}.
	Considering the second term in the r.h.s. of \eqref{ineq:prob-tau3}, using Lemma \ref{Le.sec:App-33}, for any $r>1$ and $H\ge 1$, we have 
	%	\begin{equation}\label{sec:uppBnd-tail-nu-1}
		$$
		\sup_\zs{\theta\in\Theta}\,
		\P_\zs{\theta}\,
		\left(
		\upsilon_\zs{H}
		>\n^{*}_\zs{H}
		\right)
		\le\,
		\v^{*}_\zs{r}\,
		\frac{(2\u_{*})^{2r}H^{r}+{\n^{*}_\zs{H}}^{2\delta^{*} r}}{(\n^{*}_\zs{H} -\u_\zs{*} H)^{2r}}
		\le 
		2^{2r}\v^{*}_\zs{r}\,\left( 
		\frac{1}{H^{r}}
		+
		\frac{1}{H^{2(1-\delta^{*})r}}
		\right)\,.
		$$
		\noindent Tsking into account here that $0<\delta^{*}<1/2$ and that $H\ge 1$ we can estimate this probability as
		\begin{equation}
			\label{sec:bnd-nu}
			\sup_\zs{\theta\in\Theta}\,
			\P_\zs{\theta}\,
			\left(
			\upsilon_\zs{H}
			>\n^{*}_\zs{H}
			\right)
			\le\,\frac{2^{2r+1}\v^{*}_\zs{r}}{H^{r}}
		\end{equation}
		
		%			\end{equation}
	\noindent and, therefore, using this in \eqref{ineq:prob-tau3} we get
	\begin{align}\label{eq:tau3}
		\sup_\zs{\theta\in\Theta}\,
		\P_\zs{\theta} (\tau_\zs{H}>T)\le \frac{T^m \r^{2m} Z_\zs{m}}{(\mu_\zs{*}T-H)^{2m}}+
		\frac{2^{2r+1}\v^{*}_\zs{r}}{H^{r}}\,.
	\end{align}
	\noindent  Now, from \eqref{eq:varalbe} and \eqref{eq:varalbe-bis} it follows that for any $m>1$ and $r>1$
	%\begin{align}\label{ineq:3}
	$$
	\sup_{\theta\in\Theta}\E_\zs{\theta}\ |	\wt{\theta}_\zs{H,T}-\theta|^2 \le
	\frac{(2\u_\zs{*}+\rho^{*}_\zs{H}) \sigma}{H}+\frac{T^m\,\theta_\zs{max} \r^{2m} Z_\zs{m}}{(\mu_\zs{*}T -H)^{2m}}  		
	+
	\frac{2^{2r+1}\v^{*}_\zs{r}{\theta_\zs{max}}}{H^{r}}\,.		
	$$
	\noindent Taking here {$r=2m$} we obtain the bound \eqref{sec:MultPr--fxd-ac-00}.
\end{proof}

\medskip   

\noindent  Now we consider the main term in the mean square accuracy in \eqref{sec:MultPr--fxd-ac-00} setting
\begin{equation}
	\label{sec:2-param--1-acc}    
	\e_\zs{m}(H,T)= \frac{2\u_\zs{*} \sigma}{H}+\frac{T^m\,\theta_\zs{max} \r^{2m} Z_\zs{m}}{(\mu_\zs{*}T -H)^{2m}}\,.
\end{equation}

\noindent Now  similarly to the definition
\eqref{sec;Opt-H} we choose an optimal value for the parameter $H$ 
to minimise this function, i.e.
\begin{equation}
	\label{sec;Opt-H-ab-1}
	H^{*}_\zs{T}=\argmin_\zs{0<H< \mu_\zs{*} T} \e_\zs{m}(H,T)\,.
\end{equation}
\noindent  Using this parameter in  \eqref{eq:TrunEst-ab}, we obtain the following sequential estimation 
procedure $\big(\tau^{*}_\zs{T},\theta^{*}_\zs{T} \big)$, in which
\begin{equation}
	\label{sec;Opt-H-Seq-Pr-abb}
	\tau^{*}_\zs{T}=\wt{\tau}_\zs{H^{*}_\zs{T},T} 
	\quad\mbox{and}\quad
	\theta^{*}_\zs{T}=
	\wt{\theta}_\zs{H^{*}_\zs{T},T}
	\,.
\end{equation}

\noindent Now similarly to Corollary
\ref{cor:b--aa} we can show the following result

\begin{corollary}\label{cor:b--abb-1}
	Assume that for some $0<\delta<1/2$ the parameter $\r$ in the definitions \eqref{muu-03-01}
	such that
	\begin{equation}
		\label{sec:Cond-1-2pr}
		\r=\mathrm{O}(T^{\delta})
		\quad\mbox{as}\quad
		T\to\infty\,.
	\end{equation}
	\noindent Then, for  any $m>(1-2\delta )^{-1}$ 
	the optimal truncated procedure  
	\eqref{sec;Opt-H-Seq-Pr-a}
	posses the following asymptotic 
	properties:.
	
	\begin{enumerate}
		
		\item 
		the optimal parameter
		\eqref{sec;Opt-H-ab-1}  for $T\to\infty$
		is represented as
		\begin{equation}\label{H1-2prms}
			H^{*}_\zs{T}=\mu_\zs{*} T-
			\r^{\frac{2m}{2m+1}}\,
			\left(m\mu^{2}_\zs{*}Z_\zs{m}/(\u_\zs{*}\sigma))\right)^{\frac{1}{2m+1}}( T)^{\frac{2+m}{2m+1}}(1+\mathrm{o}(1))
			=\bar{\mu}_\zs{*} T+\oo(T)\,;
%			\quad\mbox{as}\quad T\to\infty \,;
		\end{equation} 
		\item
		\noindent 
		the corresponding optimal estimation accuracy for $T\to\infty$
		has the following form 
		\begin{equation}\label{accuracy-H2--a-b}
			\sup_{\theta\in\Theta}\E_\zs{\theta}\big( \theta^{*}_\zs{T}-\theta\big)^2\le
			\e_\zs{m}(H^{*}_\zs{T},T)
			+\mathrm{o}\left(\frac{1}{T^{2m}}\right)
			=
			\frac{2\u_\zs{*}\sigma}{\bar{\mu}_\zs{*} T}+\mathrm{o}\left(\frac{1}{T}\right) \,,
		\end{equation}
	\end{enumerate}
	\noindent where 
	$
	\bar{\mu}_\zs{*}
	=
	\min_\zs{(a,b)\in\Theta}\,
	\tr \, F$.
\end{corollary}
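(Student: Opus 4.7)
The plan is to follow, almost verbatim, the proof of Corollary \ref{cor:b--aa}, since the leading part \eqref{sec:2-param--1-acc} of the accuracy bound in \eqref{sec:MultPr--fxd-ac-00} has exactly the form $c_\zs{1}/H + c_\zs{2}T^{m}\r^{2m}/(\mu_\zs{*}T-H)^{2m}$ treated there, with only the constants adapted to the two-dimensional setting. The one genuinely new feature is the extra summand $2^{4m+1}\v^{*}_\zs{2m}\theta_\zs{max}/H^{2m}$ inherited from the two-step stopping-time structure, which I must verify is negligible.

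First, I would solve $\partial_\zs{H}\e_\zs{m}(H,T)=0$ with $\e_\zs{m}$ as in \eqref{sec:2-param--1-acc}, obtaining the implicit equation
\begin{equation*}
H^{*}_\zs{T}=\mu_\zs{*}T-\r^{\frac{2m}{2m+1}}\bigl(m\,\theta_\zs{max}Z_\zs{m}/(\u_\zs{*}\sigma)\bigr)^{\frac{1}{2m+1}}T^{\frac{m}{2m+1}}(H^{*}_\zs{T})^{\frac{2}{2m+1}}.
\end{equation*}
Then exactly the two-sided bootstrap of \eqref{eq:H1-1-a}--\eqref{eq:H1-2-a}---insert the trivial bound $H^{*}_\zs{T}<\mu_\zs{*}T$ on the right-hand side to get a lower bound on $H^{*}_\zs{T}$, then re-insert this lower bound to get a matching upper bound---combined with the growth hypothesis \eqref{sec:Cond-1-2pr} and the inequality $(m+2)/(2m+1)<1$ valid for $m\ge 2$, pinches $H^{*}_\zs{T}$ to the first equality in \eqref{H1-2prms}; in particular $H^{*}_\zs{T}/T\to\mu_\zs{*}$.

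Next I would substitute $H^{*}_\zs{T}$ into \eqref{sec:MultPr--fxd-ac-00}. The leading term $(2\u_\zs{*}+\rho^{*}_\zs{H^{*}_\zs{T}})\sigma/H^{*}_\zs{T}$ yields $2\u_\zs{*}\sigma/(\mu_\zs{*}T)+\oo(1/T)$, since $\rho^{*}_\zs{H^{*}_\zs{T}}\to 0$ by \eqref{sec:MultPr-1-News-90432}. The two remaining terms need to be shown to be $\oo(1/T)$: the main error $T^{m}\theta_\zs{max}\r^{2m}Z_\zs{m}/(\mu_\zs{*}T-H^{*}_\zs{T})^{2m}$ is of order $\r^{2m/(2m+1)}T^{-3m/(2m+1)}=\OO(T^{m(2\delta-3)/(2m+1)})$, which is $\oo(1/T)$ precisely when $m(2\delta-1)<-1$, i.e., $m>(1-2\delta)^{-1}$; while the tail contribution $2^{4m+1}\v^{*}_\zs{2m}\theta_\zs{max}/(H^{*}_\zs{T})^{2m}=\OO(T^{-2m})=\oo(1/T)$ is immediate. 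To identify the limiting constant $\bar\mu_\zs{*}=\min_\zs{(a,b)\in\Theta}\tr F$, I would argue as in Remark \ref{Re.sec:non-as--11}: for each $\theta\in\Theta$ one has $\mu_\zs{\theta}=a/b+\int_\zs{\bbr_\zs{+}}\min(x^{-1},\r)\q_\zs{\theta}(x)\d x\to a/b+2b/(2a-\sigma)=\tr F(\theta)$ as $\r\to\infty$, with a uniform $\OO(\r^{-(\alpha_\zs{min}-1)})$ error on $\Theta$, so $\mu_\zs{*}\to\bar\mu_\zs{*}$, which converts $2\u_\zs{*}\sigma/(\mu_\zs{*}T)$ into $2\u_\zs{*}\sigma/(\bar\mu_\zs{*}T)+\oo(1/T)$ and completes \eqref{accuracy-H2--a-b}.

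The main obstacle will be the careful bookkeeping of exponents needed to confirm that every residual term is genuinely $\oo(1/T)$ under the sharp condition $m>(1-2\delta)^{-1}$, together with checking that the choice $r=2m$ in the application of Lemma~\ref{Le.sec:App-33} that produced \eqref{sec:bnd-nu} is indeed compatible with the target rate $\OO(T^{-2m})$ for the tail term---a subtlety absent in the scalar proof of Corollary \ref{cor:b--aa}.
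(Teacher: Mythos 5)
Your proposal is correct and follows essentially the same route as the paper, which in fact gives no separate proof of this corollary but only says it follows ``similarly to Corollary \ref{cor:b--aa}'': your adaptation --- first-order condition for $H$, the two-sided bootstrap pinching $H^{*}_\zs{T}$ under \eqref{sec:Cond-1-2pr} with $m>(1-2\delta)^{-1}$, substitution into \eqref{sec:MultPr--fxd-ac-00} with the extra tail term correctly disposed of as $\mathrm{O}(T^{-2m})=\mathrm{o}(1/T)$, and the Remark-\ref{Re.sec:non-as--11}-style identification $\mu_\zs{*}\to\bar{\mu}_\zs{*}=\min_\zs{(a,b)\in\Theta}\tr F$ --- is precisely the intended argument. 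As a minor aside, your implicit equation correctly carries the factor $\theta_\zs{max}$ that the paper's display \eqref{H1-2prms} appears to drop; since this only rescales the constant in the $\mathrm{o}(T)$ correction term, both conclusions of the corollary are unaffected.
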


\subsection{Optimality properties}\label{sec:AsOpt}

\noindent 
Now we study the optimality properties for the procedure
\eqref{sec;Opt-H-Seq-Pr-abb}.  First we study the stoping moment 
\eqref{sec:MultPr-1-00}.

\begin{proposition}\label{Pr.sec:Mult-time-1tt}
	For any compact set $\Theta\subset (\sigma/2,+\infty)\, \times\, (0,+\infty)$ and for any $r>0$
	\begin{equation}
		\label{sec:mult-tmi-1-unif}
		\lim_\zs{H\to\infty}
		\sup\limits_\zs{\theta\in \Theta}\E_\zs{\theta}
		\left|
		\frac{\t_\zs{H}}{H}-
		\frac{1}{\tr\,F}
		\right|^{r}
		=0
		\,,
	\end{equation}
	where the matrix $F$ is defined in \eqref{sec:Mlt.cond-1}.
\end{proposition}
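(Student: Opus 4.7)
The plan is to derive the uniform $L^r$-convergence \eqref{sec:mult-tmi-1-unif} by combining the almost-sure ergodic limit for $\t_\zs{H}/H$ with a uniform moment bound, extending to the full integrand $X_\zs{s}^{-1}+X_\zs{s}$ the scalar-case arguments of Propositions \ref{Pr.sec:SeqEst-1} and \ref{Pr.sec:SeqEst-1-a}, which draw on \cite{BenAlayaetal2025}.

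First, since $|\g_\zs{s}|^2=1+X_\zs{s}^2$, the defining relation \eqref{sec:MultPr-1-00} reduces, by path continuity, to $\int_\zs{0}^{\t_\zs{H}}(X_\zs{s}^{-1}+X_\zs{s})\d s=H$ on $\{\t_\zs{H}<\infty\}$, which holds $\P_\zs{\theta}$-a.s. Dividing by $\t_\zs{H}$ and invoking the ergodic identities \eqref{sec:Mlt.erg-prs-1} give $H/\t_\zs{H}\to f_\zs{1}+f_\zs{2}=\tr F$, and hence $\t_\zs{H}/H\to 1/\tr F$, $\P_\zs{\theta}$-a.s. This is the pointwise skeleton on which the uniform $L^r$ statement is built.

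Second, I would control the deviation via a two-sided hitting-time argument. For any $\varepsilon>0$ the event $\{|\t_\zs{H}/H-1/\tr F|>\varepsilon\}$ is contained in the union of $\{\int_\zs{0}^{T_\zs{+}}(X_\zs{s}^{-1}+X_\zs{s})\d s<H\}$ and $\{\int_\zs{0}^{T_\zs{-}}(X_\zs{s}^{-1}+X_\zs{s})\d s>H\}$ with $T_\zs{\pm}=(1/\tr F\pm\varepsilon)H$. Each of these probabilities is estimated by Markov's inequality together with the concentration bounds \eqref{sec:CoIn-D-T-2-1} and \eqref{sec:CoIn-2-1} applied to $\D_\zs{T}$ and to the deviation of $\int_\zs{0}^{T}\min(X_\zs{s}^{-1},R)\d s$, after replacing $X_\zs{s}^{-1}$ by $\min(X_\zs{s}^{-1},R)$ for a slowly growing threshold $R=R(H)$, exactly in the spirit of Theorem \ref{Th.sec:MultPr-343-1}. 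This produces polynomial decay of any preassigned order $m$ in $H$, uniform in $\theta\in\Theta$. Combining this tail estimate with a matching uniform bound on $\E_\zs{\theta}(\t_\zs{H}/H)^{2r}$ obtained the same way delivers the uniform integrability needed to upgrade the almost-sure limit to \eqref{sec:mult-tmi-1-unif}.

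The main technical hurdle is the singular integrand $X_\zs{s}^{-1}$, whose moments under the ergodic density $\q_\zs{\theta}$ are finite only up to order $\alpha$; the truncation level $R=R(H)$ must therefore be tuned polynomially in $H$, with exponent depending on $\alpha_\zs{min}$, so that the deterministic bias $f_\zs{1}-\int\min(x^{-1},R)\q_\zs{\theta}(x)\d x$ is dominated by $\varepsilon$ while the variance arising from \eqref{sec:CoIn-2-1} remains summable. This calibration mirrors the choice \eqref{sec:radius-rr} used in Remark \ref{Re.sec:non-as--11}, and is the step where compactness of $\Theta$ is essential for uniformity.
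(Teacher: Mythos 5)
Your overall architecture --- uniform-in-$\theta$ convergence in probability plus a uniform bound on moments of $\t_\zs{H}/H$, combined by a splitting argument --- is exactly the paper's: it proves \eqref{sec:Opt-tt-h--02} and \eqref{Upp-st-Tim--101} and then concludes. One small caveat on your assembly: the almost-sure limit you begin with holds for each fixed $\theta$ and cannot by itself yield the supremum over $\Theta$; it is the uniform-in-$\theta$ tail estimate that does all the work, so the a.s.\ skeleton is dispensable rather than load-bearing.

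The genuine gap is in your treatment of the singular term $X_\zs{s}^{-1}$ on the lower deviation event. The truncation $\varphi(x)=\min(x^{-1},\r)\le x^{-1}$ works only one way: it enlarges the event $\{\int_\zs{0}^{T_\zs{+}}(X_\zs{s}^{-1}+X_\zs{s})\d s<H\}$ (the side exploited in Theorem \ref{Th.sec:MultPr-343-1}), but it shrinks the event $\{\int_\zs{0}^{T_\zs{-}}(X_\zs{s}^{-1}+X_\zs{s})\d s>H\}$, so on that side you must additionally bound from above the nonnegative excess $\int_\zs{0}^{T_\zs{-}}\big(X_\zs{s}^{-1}-\varphi(X_\zs{s})\big)\d s$, which your sketch does not address. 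By Lemma \ref{Le.momentCIR-1} the negative moments $\E_\zs{\theta}X_\zs{t}^{-q}$ are uniformly finite only for $q<2a/\sigma$, and a compact $\Theta\subset(\sigma/2,+\infty)\times(0,+\infty)$ guarantees only some $q\in(1,\alpha_\zs{min})$; hence your claim of ``polynomial decay of any preassigned order $m$'' is unobtainable on this side by this route --- Markov at order $q$ gives an excess bound of order $T\,\r^{1-q}$, enough for convergence in probability after calibrating $\r=\r(H)$ as in \eqref{sec:radius-rr}, but not arbitrary-order decay. Fortunately arbitrary-order decay is needed only for the upper tail $\P_\zs{\theta}(\t_\zs{H}>t)$ entering the moment bound, where one can simply discard the nonnegative $X^{-1}$-integral and invoke \eqref{sec:CoIn-D-T-2-1} alone with $m>r$, which is precisely how the paper obtains \eqref{Upp-st-Tim--101}. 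For the in-probability step the paper sidesteps truncation entirely by a device missing from your proposal: It\^o's formula applied to $\ln X_\zs{t}$ gives the exact identity $\Upsilon_\zs{t}=\int_\zs{0}^{t}(X_\zs{u}^{-1}-f_\zs{1})\d u=\frac{2(\ln X_\zs{t}-\ln x)}{2a-\sigma}-\frac{2\sqrt{\sigma}}{2a-\sigma}\int_\zs{0}^{t}X_\zs{s}^{-1/2}\d W_\zs{s}$, whence $\sup_\zs{t\ge 1}\sup_\zs{\theta\in\Theta}\E_\zs{\theta}\Upsilon_\zs{t}^{2}/t<\infty$ (here $a>\sigma/2$ makes $\E_\zs{\theta}X_\zs{u}^{-1}$ uniformly finite), yielding a symmetric, bias-free Chebyshev control of both deviation directions at once. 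Your plan is repairable along these lines, but as written the lower-tail step fails at the stated level of generality.
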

\begin{proof}
	First we show that for any $\varepsilon>0$
	\begin{equation}
		\label{sec:Opt-tt-h--02}
		\lim_\zs{H\to\infty}
		\sup_\zs{\theta\in\Theta}\,\P_\zs{\theta}\left(\big|\frac{\t_\zs{{H}}}{H}-\frac{1}{\tr\, F}\big|>\varepsilon\right)=0\,.
	\end{equation}
	\noindent To do this note that  for $0<\varepsilon<\min_\zs{\theta\in\Theta}(\tr\, F)^{-1}$ can be represented as
	$$
	\P_\zs{\theta}\left(\big|\frac{\t_\zs{{H}}}{H}-\frac{1}{\tr\, F}\big|>\varepsilon\right)=\P_\zs{\theta}\left(\t_\zs{{H}}>t_\zs{1}\right)+\P_\zs{\theta}\left(\t_\zs{{H}}<t_\zs{2}\right)\,,
	$$
	\noindent 
	where $t_\zs{1}=((\tr\, F)^{-1}+\varepsilon)H$ and $t_\zs{2}=((\tr\, F)^{-1}-\varepsilon)H$.   The first probability here can be estimated  as
	\begin{align*}
		\P_\zs{\theta}\left(\t_\zs{{H}}>t_\zs{1}\right)
		&=
		\P_\zs{\theta}\left( - \tr \,\left(G_\zs{t_\zs{1}} -  t_\zs{1} F \right) >t_\zs{1}\tr F-H\right)\\[2mm]
		&\le \P_\zs{\theta}\left( | \tr \,\left(G_\zs{t_\zs{1}} -  t_\zs{1} F \right)| >\varepsilon \tr F\, H\right)
		\le \P_\zs{\theta}\left( | \tr \,\left(G_\zs{t_\zs{1}} -  t_\zs{1} F \right)| >2\varepsilon_\zs{*} \,H\right)
	\end{align*}	
	where
	$
	\varepsilon_\zs{*}=\varepsilon \inf_\zs{\theta\in\Theta}\, \tr\, F/2$.
	\noindent
	Using here the definition of the matrix $F$ in \eqref{sec:Mlt.cond-1}, we get 
	$$
	\P_\zs{\theta}\left(\t_\zs{{H}}>t_\zs{1}\right)
	\le \P_\zs{\theta}\left(\big|\Upsilon_\zs{t_\zs{1}}\big| >\varepsilon_\zs{*} H \right)+
	\P_\zs{\theta}\left(| \D_\zs{t_\zs{1}} | > \varepsilon_\zs{*}H\right)\,,
	$$
	\noindent 
	where
	$\Upsilon_\zs{t}=\int_\zs{0}^{t}\left(X_\zs{u}^{-1}-f_\zs{1}\right)\d u$ and the deviation
	$\D_\zs{t}$ is defined in
	\eqref{ineq:prob-tau} for $\theta =b$.
	From \eqref{sec:Intr.CIR}, by Itô's formula, we have for any $t>0$,
	\begin{equation*}
		\label{ln-eq-100-09-Eq}
		\ln X_\zs{t}=\ln x +
		\frac{2 a-\sigma}{2}
		\,\Upsilon_\zs{t}
		+\sqrt{\sigma}\int_\zs{0}^{t}\,X^{-1/2}_\zs{u}
		\d W_\zs{u}
	\end{equation*}
	\noindent and, therefore,
	\begin{equation*}
		\label{UppBnd-LoGggn-1-0}
		\Upsilon_\zs{t}
		=
		\frac{2(\ln{X_\zs{t}}-\ln{x})}{2 a-\sigma}
		-
		\frac{2\sqrt{\sigma}}{2 a-\sigma}\int_\zs{0}^{t}\,X^{-1/2}_\zs{s}\d W_\zs{s}\,.
	\end{equation*}
	
	\noindent
	Using here the bound
	\eqref{MomentCIR-2}we get  that  for any $\theta\in\Theta$
	$$
	\E_\zs{\theta}\,\Upsilon_\zs{t}^{2}\,\le 
	\frac{12}{(2 a-\sigma)^{2}}
	\left( (\ln{x})^{2}+\E_\zs{\theta}(\ln{X_\zs{t}})^{2}
	+ \int_\zs{0}^{t} \E_\zs{\theta}X^{-1}_\zs{u}\d u
	\right)\,.
	$$
	\noindent Now, taking into account here that for any $\epsilon>0$
	\begin{equation*}
		\label{Upp-Lgn--12980}
		\sup_\zs{x>0}\frac{|\ln x|}{x^{\epsilon}+x^{-\epsilon}}<\infty\,,
	\end{equation*}
	\noindent 
	one can conclude that
	\begin{equation}
		\label{eq:Upsilon-NEW-1}
		\Upsilon^{*}=\sup_\zs{t\ge 1}\,
		\sup_\zs{\theta\in\Theta}\,
		\frac{\E_\zs{\theta}\,\Upsilon_\zs{t}^{2}}{t}\,
		<\infty\,.
	\end{equation}
	\noindent From here and \eqref{sec:CoIn-D-T-2-1} one can deduce that for any $H>1$ for which $t_\zs{1}\ge 1$, i.e. for $H\ge 1+ \max_\zs{\theta\in\Theta} \tr F$
	\begin{equation}
		\label{sec:--Up__Bnd--1}
		\sup_\zs{\theta\in\Theta}\,
		\P_\zs{\theta}\left(\t_\zs{{H}}>t_\zs{1}\right)
		\le 
		\frac{(\Upsilon^{*}+\D^{*}_\zs{1})\max_\zs{\theta\in\Theta}\,t_\zs{1}}{\varepsilon^{2}_\zs{*} H^{2}}
		\le
		\frac{(\Upsilon^{*}+\D^{*}_\zs{1})\big(f^{*}+\varepsilon\big)}{\varepsilon^{2}_\zs{*} H}\,,
	\end{equation}
	\noindent where $f^{*}=\max_\zs{\theta\in\Theta}\,(\tr\, F)^{-1}$. Therefore,
	$$
	\lim_\zs{H\to\infty}\,
	\sup_\zs{\theta\in\Theta}\,
	\P_\zs{\theta}\left(\t_\zs{{H}}>t_\zs{1}\right)=0\,.
	$$
	\noindent Moreover,  for any $\theta\in\Theta$
	\begin{align*}
		\P_\zs{\theta}\left(\t_\zs{{H}}<t_\zs{2}\right)
		&=
		\P_\zs{\theta}\left(  \tr \,\left(G_\zs{t_\zs{2}} -  t_\zs{2} F \right) >H-t_\zs{2}\tr F \right)\\[2mm]
		&= \P_\zs{\theta}\left(  \tr \,\left(G_\zs{t_\zs{2}} -  t_\zs{2} F \right) >\varepsilon \tr F\, H\right)
		\le \P_\zs{\theta}\left( | \tr \,\left(G_\zs{t_\zs{2}} -  t_\zs{2} F \right)| >2\varepsilon_\zs{*} \,H\right)\,.
	\end{align*}	
	\noindent Therefore,
	$$
	\P_\zs{\theta}\left(\t_\zs{{H}}<t_\zs{2}\right)
	\le
	\P_\zs{\theta}\left(\big|\Upsilon_\zs{t_\zs{2}}\big| >\varepsilon_\zs{*} H \right)+
	\P_\zs{\theta}\left(| \D_\zs{t_\zs{2}} | > \varepsilon_\zs{*}H\right)
	$$
	\noindent  and similarly to
	\eqref{sec:--Up__Bnd--1}
	we can conclude the following limit equality
	$$
	\lim_\zs{H\to\infty}
	\sup_\zs{\theta\in\Theta}\,
	\P_\zs{\theta}\left(\t_\zs{{H}}<t_\zs{2}\right)=0\,,
	$$
	\noindent which implies  \eqref{sec:Opt-tt-h--02}.  Now we need to show that for any $r>0$
	\begin{equation}\label{Upp-st-Tim--101}
		\overline{\lim}_\zs{H\to\infty}
		\sup_\zs{\theta\in\Theta}\,
		\frac{\E_\zs{\theta}\t^r_\zs{H}}{H^{r}}<\infty\,.
	\end{equation}
	\noindent To this end setting $\gamma_\zs{*}=\b_\zs{max}/\a_\zs{min}$ this moment can be estimated as
	\begin{align*}
		\E_\zs{\theta}\,\t^r_\zs{H}
		&= r\int_\zs{0}^\infty t^{r-1}\P_\zs{\theta}(\t_\zs{H}>t)\,\d t
		= r\int_\zs{0}^\infty t^{r-1}\P_\zs{\theta}(\tr \,G_\zs{t}\,<H)\d t\\[2mm]
		&\le 2^{r} \gamma^{r}_\zs{*}H^{r}  + r\int_\zs{2 \gamma_\zs{*}H}^\infty t^{r-1}
		\P_\zs{\theta}\left(\int_\zs{0}^{t}\,X_\zs{s}\d s+\int_\zs{0}^{t}\,X^{-1}_\zs{s} \d s<H\right)\d t\\[2mm]	
		\le\,&  2^{r} \gamma^{r}_\zs{*}H^{r} 
		+ r\int_\zs{2 \gamma_\zs{*}H}^\infty\, t^{r-1}
		\P_\zs{\theta}\left( |\D_\zs{t}|>f_\zs{2} t-H\,\right)\d t\,.
	\end{align*}
	\noindent Taking into account here that $f_\zs{2}\ge 1/\gamma_\zs{*}$
	and using the bound
	\eqref{Th.sec:CoIn-00}	we obtain that sufficiently large $H$
	\begin{align*}
		\E_\zs{\theta}\,\t^r_\zs{H}&\le 
		2^{r} \gamma^{r}_\zs{*}H^{r} 
		+ r \gamma^{2m}_\zs{*} \D^{*}_\zs{m}
		\int_\zs{2 \gamma_\zs{*}H}^\infty\, 
		\frac{t^{r-1+m}}{(t-\gamma_\zs{*}H)^{2m}}\,		
		\d t\\[2mm]
		&\le
		2^{r} \gamma^{r}_\zs{*}H^{r} 
		+ r 2^{r+m-2}\gamma^{2m}_\zs{*} \D^{*}_\zs{m}\,\left(
		\int_\zs{\gamma_\zs{*}H}^\infty\, 
		\frac{1}{x^{m-r+1}}\,\d x		
		+
		\frac{1}{(2m-1)(\gamma_\zs{*}H)^{m-r}}
		\right)\,.		
	\end{align*}
	\noindent Choosing here $m>r$ we obtain the property  \eqref{Upp-st-Tim--101} which together with
	the equality
	\eqref{sec:Opt-tt-h--02} implies  \eqref{sec:mult-tmi-1-unif}.
\end{proof}

\begin{proposition}\label{Pr.sec:Mult-time-1-draft}
	For any compact set $\Theta\subset (\sigma/2,+\infty)\, \times\, (0,+\infty)$ and for any $r>0$, the  duration time in the sequential 
	procedure 
	\eqref{sec;Opt-H-Seq-Pr-abb}
	defined through the sequence \eqref{sec:seq-kappa}-\eqref {sec:seq-kappa--0} satisfies
	the following limit property
	\begin{equation}
		\label{sec:mult-tmi-1-unif-**}
		\lim_\zs{H\to\infty}
		\sup\limits_\zs{\theta\in \Theta}
		\E_\zs{\theta}\,
		\left|
		\frac{\tau^{*}_\zs{T}}{T}-
		\frac{\bar{\mu}_\zs{*}}{\tr\, F}
		\right|^{r}
		=0
		\,.
	\end{equation}
	\noindent the coefficient $\bar{\mu}_\zs{*}$ is defined in \eqref{accuracy-H2--a-b}.
\end{proposition}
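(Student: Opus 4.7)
The plan is to follow the same scheme as the proof of Proposition \ref{Pr.sec:SeqEst-1-a}, suitably adapted to the two-step structure of the estimator \eqref{sec;Opt-H-Seq-Pr-abb}. Since $\tau^{*}_\zs{T}=\tau_\zs{H^{*}_\zs{T}}\wedge T$, I first split according to whether $\tau_\zs{H^{*}_\zs{T}}\le T$ and write
\begin{align*}
\E_\zs{\theta}\left|\frac{\tau^{*}_\zs{T}}{T}-\frac{\bar{\mu}_\zs{*}}{\tr F}\right|^{r}
&\le \E_\zs{\theta}\left|\frac{\tau_\zs{H^{*}_\zs{T}}}{T}-\frac{\bar{\mu}_\zs{*}}{\tr F}\right|^{r}\Chi_\zs{\{\tau_\zs{H^{*}_\zs{T}}\le T\}}
+\left|1-\frac{\bar{\mu}_\zs{*}}{\tr F}\right|^{r}\P_\zs{\theta}(\tau_\zs{H^{*}_\zs{T}}>T).
\end{align*}
Noting that $\bar{\mu}_\zs{*}\le \tr F$ for every $\theta\in\Theta$, the last factor is bounded by $1$, while the uniform estimate \eqref{eq:tau3} combined with $H^{*}_\zs{T}=\bar{\mu}_\zs{*}T+\oo(T)$ from Corollary \ref{cor:b--abb-1} shows that $\sup_\zs{\theta\in\Theta}\P_\zs{\theta}(\tau_\zs{H^{*}_\zs{T}}>T)\to 0$.

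Next, I further decompose the main term on the event $B_\zs{T}=\{\upsilon_\zs{H^{*}_\zs{T}}\le \n^{*}_\zs{H^{*}_\zs{T}}\}$ and its complement. On $B_\zs{T}$, the definition \eqref{sec:seq-kappa} gives $\kappa_\zs{\upsilon_\zs{H^{*}_\zs{T}}}=H^{*}_\zs{T}$, hence $\tau_\zs{H^{*}_\zs{T}}=\t_\zs{H^{*}_\zs{T}}$. On $B_\zs{T}^{c}\cap\{\tau_\zs{H^{*}_\zs{T}}\le T\}$ the quotient $\tau_\zs{H^{*}_\zs{T}}/T$ is bounded by $1$, so the integrand is uniformly bounded, and applying \eqref{sec:bnd-nu} with exponent $r'>r$ yields $\sup_\zs{\theta\in\Theta}\P_\zs{\theta}(B_\zs{T}^{c})=\OO((H^{*}_\zs{T})^{-r'})=\oo(1)$. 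Therefore, the $B_\zs{T}^{c}$ contribution vanishes uniformly, and it remains to bound $\sup_\zs{\theta\in\Theta}\E_\zs{\theta}\left|\t_\zs{H^{*}_\zs{T}}/T-\bar{\mu}_\zs{*}/\tr F\right|^{r}$.

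For this final step I use the identity
$$
\frac{\t_\zs{H^{*}_\zs{T}}}{T}-\frac{\bar{\mu}_\zs{*}}{\tr F}
=\frac{H^{*}_\zs{T}}{T}\left(\frac{\t_\zs{H^{*}_\zs{T}}}{H^{*}_\zs{T}}-\frac{1}{\tr F}\right)
+\frac{1}{\tr F}\left(\frac{H^{*}_\zs{T}}{T}-\bar{\mu}_\zs{*}\right),
$$
combining the uniform $L^{r}$ convergence $\sup_\zs{\theta\in\Theta}\E_\zs{\theta}|\t_\zs{H^{*}_\zs{T}}/H^{*}_\zs{T}-1/\tr F|^{r}\to 0$ from Proposition \ref{Pr.sec:Mult-time-1tt} with the deterministic convergence $H^{*}_\zs{T}/T\to\bar{\mu}_\zs{*}$ from Corollary \ref{cor:b--abb-1}; note that both $H^{*}_\zs{T}/T$ and $1/\tr F$ remain uniformly bounded on the compact parameter set. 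The main obstacle is the careful bookkeeping around the two-step construction: identifying $\tau_\zs{H^{*}_\zs{T}}$ with $\t_\zs{H^{*}_\zs{T}}$ on the typical event $B_\zs{T}$, and taming the atypical event $B_\zs{T}^{c}$ via Lemma \ref{Le.sec:App-33} with a moment exponent strictly exceeding $r$ so that the uniform boundedness of $\tau_\zs{H^{*}_\zs{T}}/T$ on $\{\tau_\zs{H^{*}_\zs{T}}\le T\}$ is enough to kill the remainder.
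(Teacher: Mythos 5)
Your proposal is correct and follows essentially the same route as the paper's proof: the same initial split on $\{\tau_\zs{H^{*}_\zs{T}}\le T\}$ controlled via \eqref{eq:tau3} and \eqref{H1-2prms}, the same reduction to $\t_\zs{H^{*}_\zs{T}}$ on the event $\{\upsilon_\zs{H^{*}_\zs{T}}\le \n^{*}_\zs{H^{*}_\zs{T}}\}$ (where $\kappa_\zs{\upsilon_\zs{H^{*}_\zs{T}}}=H^{*}_\zs{T}$) with the exceptional event tamed by \eqref{sec:bnd-nu}, and the same conclusion via Proposition \ref{Pr.sec:Mult-time-1tt} combined with $H^{*}_\zs{T}/T\to\bar{\mu}_\zs{*}$. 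You merely spell out the telescoping identity that the paper leaves implicit; your choice of exponent $r'>r$ in \eqref{sec:bnd-nu} is harmless but unnecessary, since the integrand on the exceptional event is bounded and any decay of the probability suffices.
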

\begin{proof}
	Note here that
	$$
	\E_\zs{\theta}\,
	\left|
	\frac{\tau^{*}_\zs{T}}{T}-
	\frac{\bar{\mu}_\zs{*}}{\tr\, F}
	\right|^{r}\le 
	\E_\zs{\theta}\,
	\left|
	\frac{\tau_\zs{H^{*}_\zs{T}}}{T}-
	\frac{\bar{\mu}_\zs{*}}{\tr\, F}
	\right|^{r}\,\Chi_\zs{\{\tau_\zs{H^{*}_\zs{T}}\le T \}}+
	\P_\zs{\theta}\left( \tau_\zs{H^{*}_\zs{T}}> T\right)\,.		
	$$
	\noindent The first expectation can be estimated as
	\begin{align*}
	\E_\zs{\theta}\,
	\left|
	\frac{\tau_\zs{H^{*}_\zs{T}}}{T}-
	\frac{\bar{\mu}_\zs{*}}{\tr\, F}
	\right|^{r}\,\Chi_\zs{\{\tau_\zs{H^{*}_\zs{T}}\le T \}}& \le
	\E_\zs{\theta}\,
	\left|
	\frac{\t_\zs{H^{*}_\zs{T}}}{T}-
	\frac{\bar{\mu}_\zs{*}}{\tr\, F}
	\right|^{r}\,\Chi_\zs{\{\tau_\zs{H^{*}_\zs{T}}\le T \}\cap \{\upsilon_\zs{H^{*}_\zs{T}}
		\le \n^{*}_\zs{H^{*}_\zs{T}}\}} \\[2mm]
	&+\P_\zs{\theta}\,
	\left(
	\upsilon_\zs{H^{*}_\zs{T}}
	>\n^{*}_\zs{H^{*}_\zs{T}}
	\right)		
	\end{align*}
	\noindent and, therefore,
	
	$$
	\E_\zs{\theta}\,
	\left|
	\frac{\tau^{*}_\zs{T}}{T}-
	\frac{\bar{\mu}_\zs{*}}{\tr\, F}
	\right|^{r}\le 
	\E_\zs{\theta}\,
	\left|
	\frac{\t_\zs{H^{*}_\zs{T}}}{T}-
	\frac{\bar{\mu}_\zs{*}}{\tr\, F}
	\right|^{r}\, 
	+\P_\zs{\theta}\,\left(
	\upsilon_\zs{H^{*}_\zs{T}}
	>\n^{*}_\zs{H^{*}_\zs{T}}
	\right)		
	+
	\P_\zs{\theta}\left( \tau_\zs{H^{*}_\zs{T}}> T\right)\,.		
	$$
	
	\noindent Now, using  {the equality \eqref{H1-2prms}
	and the condition  \eqref{sec:Cond-1-2pr}}
	in the bound 
	\eqref{eq:tau3}
	we obtain that
	\begin{equation}
		\label{sec:St-Tim=Bnd-1}
		\lim_\zs{T\to\infty}\,T
		\sup_\zs{\theta\in\Theta}\,
		\P_\zs{\theta}\left( \tau_\zs{H^{*}_\zs{T}}> T\right)
		=0\,.
	\end{equation}
	\noindent 
	The bound   \eqref{sec:bnd-nu}
	and Proposition  \ref{Pr.sec:Mult-time-1tt}
	{combined with \eqref{H1-2prms}} imply the property \eqref{sec:mult-tmi-1-unif}.
\end{proof}

\medskip

\begin{theorem}
	\label{Th.sec:SeqEst-1-a-bb-0}
	The sequential procedure \eqref{sec;Opt-H-Seq-Pr-abb} is pointwise optimal, i.e for any parameter
	$\theta_\zs{0}\in (\sigma/2,+\infty)\, \times\, (0,+\infty)$
	\begin{equation}
		\label{sec:OPT-EstPrs-1-a-b}
		\lim_\zs{\gamma\to 0}\,
		\underline{\lim}_\zs{T\to\infty}
		\frac{
			\inf_\zs{
				(\tau,\wh{\theta}_\zs{\tau})
				\in
				\cH_\zs{T}(\theta_\zs{0},\gamma)}
			\sup_\zs{|\theta-\theta_\zs{0}|<\gamma}\,
			\E_\zs{\theta}\big| F^{1/2}(\theta_\zs{0}) (\wh{\theta}_\zs{\tau}-\theta)\big|^2}{\sup_\zs{|\theta-\theta_\zs{0}|<\gamma}\,
			\E_\zs{\theta}\big|F^{1/2}(\theta_\zs{0})(\theta^{*}_\zs{T}-\theta)\big|^2}
		=1\,,
	\end{equation}			
	\noindent wher the class $\cH_\zs{T}(\theta_\zs{0},\gamma)$  and  the matrix $F=F(\theta)$  are defined in \eqref{sec:AsOpt-1} and \eqref{sec:Mlt.cond-1}
	respectively.
\end{theorem}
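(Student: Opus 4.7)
The plan is to mirror the three-step argument used in the scalar analogue Theorem \ref{Th.sec:SeqEst-1=a}.

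First, I would invoke the two-dimensional LAN property of the process \eqref{eq:CIR2} at an arbitrary $\theta_\zs{0}\in (\sigma/2,\infty)\times (0,\infty)$ with normalizing matrix of order $\sqrt{T/\sigma}\,F^{1/2}(\theta_\zs{0})$, established in \cite{BenAlayaetal2025}. Injecting this LAN property into Proposition \ref{Pr.sec:AsOp.-1} applied with $k=2$ to the local class $\cH_\zs{T}(\theta_\zs{0},\gamma)$ then yields the asymptotic minimax lower bound
\begin{equation*}
\underline{\lim}_\zs{T\to\infty}\,\inf_\zs{(\tau,\wh{\theta}_\zs{\tau})\in \cH_\zs{T}(\theta_\zs{0},\gamma)}\,\sup_\zs{|\theta-\theta_\zs{0}|<\gamma}\,T\,\E_\zs{\theta}\big|F^{1/2}(\theta_\zs{0})(\wh{\theta}_\zs{\tau}-\theta)\big|^{2}\,\ge\, 2\sigma.
\end{equation*}

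Second, I would derive a matching asymptotic upper bound for the truncated sequential procedure \eqref{sec;Opt-H-Seq-Pr-abb}. The estimate \eqref{accuracy-H2--a-b} only controls the Euclidean norm and is not sharp on the $F^{1/2}$-weighted risk, so I would revisit the martingale representation \eqref{sec:Multi-Par--1098} and replace the crude bound $|\xi_\zs{n}|^{2}\le \b_\zs{n}^{2}|G^{-1}_\zs{\t_\zs{n}}|^{2}|\eta_\zs{\t_\zs{n}}|^{2}$ used in \eqref{eq:varalbe-bis} by the exact It\^o isometry $\E_\zs{\theta}\big(\eta_\zs{\t_\zs{n}}\eta_\zs{\t_\zs{n}}^{\top}\big)=\E_\zs{\theta}G_\zs{\t_\zs{n}}$ together with the orthogonality of the martingale increments $\eta_\zs{\t_\zs{n}}-\eta_\zs{\t_\zs{n-1}}$. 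Combining these with the ergodic approximation $G_\zs{\t_\zs{n}}/\kappa_\zs{n}\to F(\theta)/\tr F(\theta)$, the $L^{r}$-uniform convergence provided by Proposition \ref{Pr.sec:Mult-time-1tt}, and the continuity of $F$ at $\theta_\zs{0}$, one should obtain
\begin{equation*}
\overline{\lim}_\zs{T\to\infty}\,T\,\sup_\zs{|\theta-\theta_\zs{0}|<\gamma}\,\E_\zs{\theta}\big|F^{1/2}(\theta_\zs{0})(\theta^{*}_\zs{T}-\theta)\big|^{2}\,\le\, 2\sigma\,\rho(\gamma),
\end{equation*}
with $\rho(\gamma)\to 1$ as $\gamma\to 0$; the contribution of the truncation event $\{\tau^{*}_\zs{T}>T\}$ is negligible thanks to the tail estimate \eqref{sec:St-Tim=Bnd-1}. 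Finally, by a sandwich argument identical to the passage \eqref{sec:UpBnd-bb-901} in the scalar case, I would send $T\to\infty$ first and then $\gamma\to 0$ to match the upper and lower bounds, yielding \eqref{sec:OPT-EstPrs-1-a-b}.

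The principal technical obstacle lies in the second step. Theorem \ref{Th.sec:MultPr-343-1} discards the covariance structure of the sequential MLE by applying an operator-norm bound, which produces a constant of order $|F^{-1}|^{2}\tr F$ in place of the sharp Hajek--Le~Cam constant $k=2$ appropriate to the $F^{1/2}$-weighted risk. Recovering this sharp constant requires the martingale decomposition $\eta_\zs{\t_\zs{n}}=\sum_\zs{k\le n}\big(\eta_\zs{\t_\zs{k}}-\eta_\zs{\t_\zs{k-1}}\big)$ with predictable quadratic variation $G_\zs{\t_\zs{k}}-G_\zs{\t_\zs{k-1}}$, together with a uniform-in-$\theta$ version of the asymptotic identity $G^{-1}_\zs{\t_\zs{n}}\sim F^{-1}(\theta)\tr F(\theta)/\kappa_\zs{n}$ on the event $\{\upsilon_\zs{H^{*}_\zs{T}}\le \n^{*}_\zs{H^{*}_\zs{T}}\}$, whose complement is already handled by the bound \eqref{sec:bnd-nu}.
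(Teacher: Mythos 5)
Your proposal is correct and, at its two ends, coincides with the paper's own proof: the lower bound is obtained exactly as in the paper (the two-dimensional LAN property from \cite{BenAlayaetal2025} with Fisher information $F(\theta_{0})/\sigma$, fed into Proposition~\ref{Pr.sec:AsOp.-1} with $k=2$, giving the constant $2\sigma$ as in \eqref{sec:AsOpt-2-a--b-0}), and your final sandwich --- truncation handled through \eqref{eq:err-44} and the tail estimate \eqref{sec:St-Tim=Bnd-1}, then the change of weighting from $F^{1/2}(\theta)$ to $F^{1/2}(\theta_{0})$ by continuity --- is what the paper does via the bound $\big|F^{1/2}(\theta)u\big|^{2}\ge \lambda_{\min}\big(F^{-1/2}(\theta_{0})F(\theta)F^{-1/2}(\theta_{0})\big)\big|F^{1/2}(\theta_{0})u\big|^{2}$. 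The genuine divergence is in the middle step. You correctly diagnose that Theorem~\ref{Th.sec:MultPr-343-1} and \eqref{accuracy-H2--a-b} lose the sharp constant for the $F^{1/2}$-weighted risk; but where you propose to re-derive the sharp bound from the martingale representation, the paper simply imports it: Theorem 3.3 of \cite{BenAlayaetal2025} already states $\overline{\lim}_{T\to\infty}\sup_{\theta\in\Theta}\m^{*}_{T}(\theta)\,\E_{\theta}\big|F^{1/2}(\theta)(\ov{\theta}_{H^{*}_{T}}-\theta)\big|^{2}\le 2\sigma$ for the non-truncated aggregated estimator, and Theorem 3.2 there converts $\m^{*}_{T}(\theta)$ into $H^{*}_{T}/\tr\,F(\theta)$ and then, via \eqref{H1-2prms}, into $\bar{\mu}_{*}T/\tr\,F(\theta)$ --- the role you assign to Proposition~\ref{Pr.sec:Mult-time-1tt}. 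Your route buys a self-contained argument; the paper's buys brevity. Be aware, though, that your sketch understates the difficulty of the re-derivation: the stage errors $\wh{\theta}_{\t_{n}}-\theta=\sqrt{\sigma}\,G^{-1}_{\t_{n}}\eta_{\t_{n}}$ are correlated across $n$ (they share the stochastic integral up to $\t_{n}$), the weights $\b^{2}_{n}$ and the normalizer $\big(\sum_{n\le\upsilon_{H}}\b^{2}_{n}\big)^{-1}$ are random and correlated with the noise, and $\upsilon_{H}$ is itself a stopping time; so It\^o isometry plus orthogonality of increments alone does not yield the constant $2\sigma$, and one needs in addition the uniform-in-$\theta$ moment control of $G_{\t_{n}}/\kappa_{n}-F/\tr\,F$ provided by Lemma~\ref{Le.sec:App-2} (bound \eqref{sec:uppBnd-Dev-1}). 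You flag this obstacle yourself, so it is not a gap in principle, but carrying it out would amount to reproving Theorem 3.3 of \cite{BenAlayaetal2025}; citing it, as the paper does, is the efficient way to close your second step.
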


\begin{proof}
	Note that according to Theorem 9 from  \cite{BenAlayaetal2025}
	for  any  $\theta=(a,b)^{\top}\in (\sigma/2,+\infty)\, \times\, (0,+\infty)$  the process \eqref{sec:Intr.CIR}
	satisfies the LAN condition with the   Fisher information $I(\theta)=F(\theta)/\sigma$. 
	Therefore, in view of Proposition \ref{Pr.sec:AsOp.-1}  for $k=2$
	we obtain that for any
	$\theta\in (\sigma/2,+\infty)\, \times\, (0,+\infty)$
	and for any $\gamma>0$ for which 
	$$
	\big\{\theta\in\bbr^{2}\,:\, |\theta-\theta_\zs{0}|<\gamma \big\} \subset (\sigma/2,+\infty)\, \times\, (0,+\infty)
	$$
	\noindent the following lower bound holds true   
	\begin{equation}
		\label{sec:AsOpt-2-a--b-0}
		\underline{\lim}_\zs{T\to\infty}
		\inf_\zs{(\tau,\wh{\theta}_\zs{\tau})\in\cH_\zs{T}(\theta_\zs{0},\gamma)}\,T
		\sup_\zs{| \theta-\theta_\zs{0}|<\gamma}\E_\zs{\theta}\,|F^{1/2}(\wh{\theta}_\zs{\tau}-\theta)|^{2}
		\ge 2 \sigma\,.
	\end{equation}
	\noindent Moreover, note that
	\begin{equation}\label{eq:err-44}
		\E_\zs{\theta}\,\big|
		F^{1/2}(\theta)\,(\theta^{*}_\zs{T}
		-\theta)
		\big |^{2}
		\le\, \E_\zs{\theta}\,\big |
		F^{1/2}(\theta)\, (\ov{\theta}_\zs{H^{*}_\zs{T}}
		-\theta)
		\big|^{2}\, + 
		|F^{1/2}\theta|^2\, {	\P_\zs{\theta}}(\tau_\zs{H^{*}_\zs{T}}> T)\,.
	\end{equation}
	\noindent To study the first term we use  Theorem 3.3 from
	\cite{BenAlayaetal2025} according to which for any compact set $\Theta (\sigma/2,+\infty)\, \times\, (0,+\infty)$
	
	\begin{equation}
		\label{sec:Upp-bnd-acc--01}
		\overline{\lim}_\zs{T\to\infty}
		\sup\limits_\zs{\theta\in \Theta}
		\m^{*}_\zs{T}(\theta)\,
		\E_\zs{\theta}
		\left| F^{1/2}(\theta)(\ov{\theta}_\zs{H^{*}_\zs{T}}
		-
		\theta)
		\right|^{2}
		\le 
		2\sigma\,,
	\end{equation}
	\noindent where $\m^{*}_\zs{T}(\theta)=\E_\zs{\theta} \tau_\zs{H^{*}_\zs{T}}$.
	Moreover, using here
	Theorem 3.2 from
	\cite{BenAlayaetal2025} one can conclude that
	
	\begin{equation*}
		\label{sec:Upp-bnd-acc--01-mod}
		\overline{\lim}_\zs{T\to\infty}
		\sup\limits_\zs{\theta\in \Theta}
		\frac{H^{*}_\zs{T}}{\tr\,F(\theta)}
		\,
		\E_\zs{\theta}
		\left| F^{1/2}(\theta)
		(\ov{\theta}_\zs{H^{*}_\zs{T}}
		-
		\theta)
		\right|^{2}
		\le 
		2\sigma
	\end{equation*}
	\noindent and, therefore, in view of  the representation {\eqref{H1-2prms}}
	\begin{equation*}
		\label{sec:UBn-02-mod-}
		\overline{\lim}_\zs{T\to\infty}T\,
		\sup\limits_\zs{\theta\in \Theta}
		\frac{\bar{\mu}_\zs{*}}{\tr\,F(\theta)}
		\,
		\E_\zs{\theta}
		\left| F^{1/2}(\theta)
		(\bar{\theta}_\zs{H^{*}_\zs{T}}
		-
		\theta)
		\right|^{2}
		\le 
		2\sigma\,.
	\end{equation*}
	\noindent We choose here $\Theta=\{\theta\in\bbr^{2}\,:|\theta-\theta_\zs{0}|<\gamma\}$
	and note that
	$$
	\left| F^{1/2}(\theta)
	(\bar{\theta}_\zs{H^{*}_\zs{T}}
	-
	\theta)
	\right|^{2}
	\ge
	\lambda_\zs{min}\Big(F^{-1/2}(\theta_\zs{0})F(\theta)F^{-1/2}(\theta_\zs{0})\Big)
	\left| F^{1/2}(\theta_\zs{0})
	(\bar{\theta}_\zs{H^{*}_\zs{T}}
	-
	\theta)
	\right|^{2}\,,
	$$
	\noindent where $\lambda_\zs{min}(G)$ is the minimal eigenvalue of the matrix $G$. Therefore, taking into account that
	$F(\theta)\to F(\theta_\zs{0})$ as $\theta\to\theta_\zs{0}$ and using
	the property \eqref{sec:St-Tim=Bnd-1} in {\eqref{eq:err-44}} 
	we obtain that
	\begin{equation*}
		\label{sec:UBn-02-mod-}
		\lim_\zs{\gamma\to 0}
		\overline{\lim}_\zs{T\to\infty}T\,
		\sup\limits_\zs{|\theta-\theta_\zs{0}|<\gamma}
		\,
		\E_\zs{\theta}
		\left| F^{1/2}(\theta_\zs{0})
		(\theta^{*}_\zs{T}
		-
		\theta)
		\right|^{2}
		\le 
		2\sigma\,,
	\end{equation*}
	\noindent which   together with the lower bound \eqref{sec:AsOpt-2-a--b-0} implies the property \eqref{sec:OPT-EstPrs-1-a-b}.
\end{proof}

\noindent  Now we study the minimax properties for the sequential procedure \eqref{sec;Opt-H-Seq-Pr-abb}.

\begin{theorem}\label{Th.sec:AsOp.-2-Mult-Ver}
	For any compact set $\Theta\subset (\sigma/2\,,\,+\infty) \times (0\,,\,+\infty)$
	the
	sequential procedure  \eqref{sec;Opt-H-Seq-Pr-abb} 
	is asymptotically optimal in the minimax sense, i.e.
	\begin{equation}
		\label{sec:AsOpt-SeqPr-Minimax}
		\lim_\zs{T\to\infty}\,
		\frac{
			\inf_\zs{(\tau,\wh{\theta}_\zs{\tau})\in\Xi^{*}_\zs{T}}\,
			\sup_\zs{ \theta\in\Theta}\,
			\E_\zs{\theta}\,\big|\,\wt{F}^{1/2}\,(\wh{\theta}_\zs{\tau}-\theta)\big|^{2}
		}
		{\sup_\zs{ \theta\in\Theta}\,
			\E_\zs{\theta}\,\big|\,\wt{F}^{1/2}\,(\theta^{*}_\zs{T}
			-\theta)\big|^{2}}
		=1\,,
	\end{equation}
	where the class $\Xi^{*}_\zs{T}$ is defined in \eqref{sec;AssOpt-GlCls-1bis} and the matrix $\wt{F}=\wt{F}(\theta)=F/ \tr\,F$. 
\end{theorem}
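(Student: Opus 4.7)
The proof plan mirrors that of Theorem \ref{Th.sec:AsOp.-2} but in the two-parameter setting, with matching asymptotic risk $2\sigma/\bar{\mu}_*$ on both sides. For the lower bound, I would invoke the minimax LAN-type result Theorem \ref{Th.sec:AsOp.-1bis} from the Appendix with $k=2$, applied to the class $\Xi^*_T$ defined through the stopping time $\tau^*_T$ of \eqref{sec;Opt-H-Seq-Pr-abb}. Condition $\C_1)$ of the Appendix follows directly from Proposition \ref{Pr.sec:Mult-time-1-draft} (which in particular gives $\E_\theta\tau^*_T\to\infty$ uniformly on $\Theta$), while condition $\C_2)$, namely the two-parameter LAN property with Fisher information matrix $I(\theta)=F(\theta)/\sigma$, is precisely \cite[Theorem 9]{BenAlayaetal2025}. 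The resulting information inequality reads
$$
\underline{\lim}_\zs{T\to\infty}\,\inf_\zs{(\tau,\wh{\theta}_\zs{\tau})\in\Xi^{*}_\zs{T}}\,\sup_\zs{\theta\in\Theta}\,\E_\zs{\theta}\tau^{*}_\zs{T}\,\E_\zs{\theta}\bigl|\wt{F}^{1/2}(\wh{\theta}_\zs{\tau}-\theta)\bigr|^{2}\,\ge\,\sup_\zs{\theta\in\Theta}\,\sigma\,\tr\bigl(\wt{F}(\theta)F^{-1}(\theta)\bigr).
$$
The key computation is $\tr(\wt{F}F^{-1})=\tr(FF^{-1})/\tr F=2/\tr F(\theta)$, so the right-hand side equals $\sup_\zs{\theta\in\Theta}2\sigma/\tr F(\theta)=2\sigma/\bar{\mu}_*$ by the very definition of $\bar{\mu}_*$ in \eqref{accuracy-H2--a-b}.

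To convert this into a rate in $T$, I would use Proposition \ref{Pr.sec:Mult-time-1-draft}, which gives $\E_\zs{\theta}\tau^{*}_\zs{T}=(T\bar{\mu}_*/\tr F(\theta))(1+\oo(1))$ uniformly on $\Theta$; in particular the product $\E_\zs{\theta}\tau^{*}_\zs{T}\tr F(\theta)$ is asymptotically constant in $\theta$ and equal to $T\bar{\mu}_*$. Substituting back yields
$$
\underline{\lim}_\zs{T\to\infty}\,T\,\inf_\zs{(\tau,\wh{\theta}_\zs{\tau})\in\Xi^{*}_\zs{T}}\,\sup_\zs{\theta\in\Theta}\,\E_\zs{\theta}\bigl|\wt{F}^{1/2}(\wh{\theta}_\zs{\tau}-\theta)\bigr|^{2}\,\ge\,\frac{2\sigma}{\bar{\mu}_*}.
$$

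For the upper bound, the plan is to reuse inequality \eqref{sec:Upp-bnd-acc--01} from the proof of Theorem \ref{Th.sec:SeqEst-1-a-bb-0}, namely $\overline{\lim}_\zs{T\to\infty}\sup_\zs{\theta\in\Theta}\m^{*}_\zs{T}(\theta)\,\E_\zs{\theta}|F^{1/2}(\theta)(\ov{\theta}_\zs{H^{*}_\zs{T}}-\theta)|^{2}\le 2\sigma$ with $\m^{*}_\zs{T}(\theta)=\E_\zs{\theta}\tau_\zs{H^{*}_\zs{T}}$. Dividing by $\tr F(\theta)$ turns the $F^{1/2}$-risk into the $\wt F^{1/2}$-risk, and using the uniform asymptotic $\m^{*}_\zs{T}(\theta)\tr F(\theta)=T\bar{\mu}_*(1+\oo(1))$ from Proposition \ref{Pr.sec:Mult-time-1-draft} yields $\overline{\lim}_\zs{T\to\infty}T\sup_\zs{\theta\in\Theta}\E_\zs{\theta}|\wt F^{1/2}(\theta)(\ov{\theta}_\zs{H^{*}_\zs{T}}-\theta)|^{2}\le 2\sigma/\bar{\mu}_*$. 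The truncation contribution in passing from $\ov{\theta}_\zs{H^{*}_\zs{T}}$ to $\theta^{*}_\zs{T}=\ov{\theta}_\zs{H^{*}_\zs{T}}\Chi_\zs{\{\tau_\zs{H^{*}_\zs{T}}\le T\}}$ is controlled by $|\wt F^{1/2}(\theta)\theta|^{2}\P_\zs{\theta}(\tau_\zs{H^{*}_\zs{T}}>T)$, which decays faster than $1/T$ thanks to \eqref{sec:St-Tim=Bnd-1}; this term is therefore absorbed into the $\oo(1/T)$ remainder. Matching the two bounds of order $2\sigma/\bar{\mu}_*$ delivers the limit equality \eqref{sec:AsOpt-SeqPr-Minimax}.

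The main technical delicacy is the sup/inf bookkeeping in the lower bound: the LAN inequality is naturally pointwise in $\theta$, and one needs to pass to $\sup_{\theta\in\Theta}$ using that $\tr F(\theta)\,\E_\zs{\theta}\tau^{*}_\zs{T}$ is uniformly close to $T\bar{\mu}_*$. This uniformity is exactly the content of Proposition \ref{Pr.sec:Mult-time-1-draft} in any $L^r$, so the argument transposes verbatim from the one-parameter proof of Theorem \ref{Th.sec:AsOp.-2}; no new analytic input beyond the already established concentration bounds and the two-parameter LAN property is required.
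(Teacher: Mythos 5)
Your proposal is correct and follows essentially the same route as the paper: the lower bound via Theorem~\ref{Th.sec:AsOp.-1bis} with $k=2$ (condition $\C_\zs{1})$ from Proposition~\ref{Pr.sec:Mult-time-1-draft}, condition $\C_\zs{2})$ from the two-dimensional LAN with $I(\theta)=F(\theta)/\sigma$), the upper bound via Theorem 3.3 of \cite{BenAlayaetal2025} together with the tail bound \eqref{sec:St-Tim=Bnd-1}, and the same conversion $\m^{*}_\zs{T}(\theta)\,|F^{1/2}u|^{2}\approx T\bar{\mu}_\zs{*}\,|\wt{F}^{1/2}u|^{2}$ using $\tr F$. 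The only cosmetic deviation is that you phrase the normalizing asymptotics for $\E_\zs{\theta}\tau_\zs{H^{*}_\zs{T}}$ through Proposition~\ref{Pr.sec:Mult-time-1-draft} (which literally concerns the truncated time $\tau^{*}_\zs{T}$, the gap being closed by \eqref{sec:St-Tim=Bnd-1}), where the paper invokes Theorem 3.2 of \cite{BenAlayaetal2025} with $H^{*}_\zs{T}=\bar{\mu}_\zs{*}T+\oo(T)$ from \eqref{H1-2prms}; this is an equivalent bookkeeping choice, not a different argument.
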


\begin{proof}
	Note that  the {property \eqref{sec:mult-tmi-1-unif-**}} implies the condition  $\C_\zs{1})$. Therefore, using 
	Theorem
	\ref{Th.sec:AsOp.-1bis} for $k=2$
	we obtain that
	\begin{align}\nonumber
		\underline{\lim}_\zs{T\to\infty}\,&T\,\ov{\mu}_\zs{*}\,
		\inf_\zs{(\tau, \wh{\theta}_\zs{\tau})\in\Xi^{*}_\zs{T}}\,
		\sup_\zs{ \theta\in\Theta}\,\E_\zs{\theta}\,|\,\wt{F}^{1/2} (\wh{\theta}_\zs{\tau}-\theta)|^2
\\[2mm]\label{sec:33--01-abf-01}		
	&	=
		\underline{\lim}_\zs{T\to\infty}
		\inf_\zs{(\tau, \wh{\theta}_\zs{\tau})\in\Xi^{*}_\zs{T}}\,\m^{*}_\zs{T}(\theta)\,
		\sup_\zs{ \theta\in\Theta}\,\E_\zs{\theta}\,|\,F^{1/2} (\wh{\theta}_\zs{\tau}-\theta)|^2		
		\ge 2\,\sigma\,.
	\end{align}
	\noindent Moreover, taking into account in  \eqref{eq:err-44} that the expectation $\m^{*}_\zs{T}(\theta)=\E_\zs{\theta} \tau^{*}_\zs{T}\le \E_\zs{\theta}\tau_\zs{H^{*}_\zs{T}}$ and  
	$\m^{*}_\zs{T}(\theta)=\E_\zs{\theta}\tau^{*}_\zs{T}\le T$,  we get that
	\begin{equation*}\label{eq:err}
		\m^{*}_\zs{T}(\theta)
		\E_\zs{\theta}\,\big|
		F^{1/2}(\theta)\,(\theta^{*}_\zs{T}
		-\theta)
		\big |^{2}
		\le\, \E_\zs{\theta}\tau_\zs{H^{*}_\zs{T}}
		\E_\zs{\theta}\,\big |
		F^{1/2}(\theta)\, (\ov{\theta}_\zs{H^{*}_\zs{T}}
		-\theta)
		\big|^{2}\ +T\,|F^{1/2}\theta|^2\, {	\P_\zs{\theta}}(\tau_\zs{H^{*}_\zs{T}}> T)\,.   
	\end{equation*}
	
	\noindent Now, Theorem 3.3 from
	\cite{BenAlayaetal2025} and  the property \eqref{sec:St-Tim=Bnd-1}
	yield the following upper bound
	$$
	\ov{\lim}_\zs{T\to\infty}\,T\,\ov{\mu}_\zs{*}\,
	\sup_\zs{ \theta\in\Theta}\,\E_\zs{\theta}\,|\,\wt{F}^{1/2} (\wh{\theta}_\zs{\tau}-\theta)|^2=
	\ov{\lim}_\zs{T\to\infty}\,\m^{*}_\zs{T}(\theta)\,
	\sup_\zs{ \theta\in\Theta}\,\E_\zs{\theta}\,|\,F^{1/2} (\theta^{*}_\zs{T}-\theta)|^2		
	\le 2\,\sigma\,,
	$$
	\noindent which together with  the lower bound \eqref{sec:33--01-abf-01} implies the property  \eqref{sec:AsOpt-SeqPr-Minimax}.
\end{proof}

\begin{remark}
	\label{Re.sec:opt-a-b--01}
	It should be noted that the property  \eqref{sec:mult-tmi-1-unif-**}
 imply that	
for $T\to\infty$ the mean observations duration $\E_\zs{\theta} \tau^{*}_\zs{T}-T\to-\infty$ for 
$\bar{\mu}_\zs{*} \neq \tr\, F$. 
Therefore, as is noted in Remark
  \eqref{Re.sec:opt--1} the procedure \eqref{sec;Opt-H-Seq-Pr-abb} has the same property as  the procedures  \eqref{sec;Opt-H-Seq-Pr} and  \eqref{sec;Opt-H-Seq-Pr-a}, 
  i.e. to provide the optimality properties it essentially reduces the duration of observations 
  compared with
   the non-sequential estimation based on the fixed observations  duration $T$.
\end{remark}

\section{Concentration inequalities for  the CIR models.}\label{sec:CoIn}

In this section we study the properties of the deviation in the ergodic theorem for the process \eqref{sec:Intr.CIR}.
First we study the deviation problem fir this process 
with the fixed parameter $a$ and $b=\theta$.  First we study the deviation \eqref{ineq:prob-tau}.

\begin{theorem}
	\label{Th.sec:CoIn-00}
	For any compact set {$\Theta\subset  (0,+\infty)$} and 
	for any $m\ge1$ 
	\begin{equation}
		\label{sec:CoIn-D-T-2-1}
		\D^{*}_\zs{m}=
		\sup_\zs{T\ge 1}
		\sup_\zs{\theta\in\Theta}
		\frac{
			\E_\zs{\theta}\,
			\D_\zs{T} ^{2m}}{T^{m}}
		\le 
		%\frac{3^{2m-1}}{\b^{2m}_\zs{min}}\,\left(2\x_\zs{2m}			+\sigma^{m}\,		\x_\zs{m}\,(m(2m-1))^{m}		\right)
		{\frac{\L_\zs{m}}{\b^{2m}_\zs{min}}}\,,
	\end{equation}
	\noindent {where the constants  $\b_\zs{min}$ and $\L_\zs{m}$ 
	%and $\x_\zs{m}$
	are defined in  \eqref{sec:Para-1}
	and
	%\eqref{MomentCIR-2} 
	\eqref{sec:LL-m-1} respectively}.
\end{theorem}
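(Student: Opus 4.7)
The plan is to express $\D_\zs{T}$ in closed form by integrating the SDE \eqref{sec:Intr.CIR}, and then to combine the Burkholder--Davis--Gundy inequality with the uniform CIR moment bounds collected in \eqref{MomentCIR-2}.

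First I would integrate \eqref{sec:Intr.CIR} from $0$ to $T$ with $b=\theta$ and isolate the time integral of $X$, which gives
\begin{equation*}
\theta \int_\zs{0}^{T} X_\zs{s}\,\d s = aT - X_\zs{T} + x + \sqrt{\sigma}\,M_\zs{T}\,,\qquad M_\zs{T} := \int_\zs{0}^{T}\sqrt{X_\zs{s}}\,\d W_\zs{s}\,,
\end{equation*}
so that $\D_\zs{T} = \theta^{-1}\bigl(x - X_\zs{T} + \sqrt{\sigma}\,M_\zs{T}\bigr)$. The elementary convexity bound $(u_\zs{1}+u_\zs{2}+u_\zs{3})^{2m}\le 3^{2m-1}(u_\zs{1}^{2m}+u_\zs{2}^{2m}+u_\zs{3}^{2m})$ then yields
\begin{equation*}
\E_\zs{\theta} |\D_\zs{T}|^{2m} \le \frac{3^{2m-1}}{\theta^{2m}}\Bigl(x^{2m} + \E_\zs{\theta} X_\zs{T}^{2m} + \sigma^{m}\,\E_\zs{\theta} M_\zs{T}^{2m}\Bigr)\,.
\end{equation*}

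Next I would control the martingale term. Since $\langle M\rangle_\zs{T} = \int_\zs{0}^{T} X_\zs{s}\,\d s$, applying It\^{o}'s formula to $|M|^{2m}$ (equivalently the BDG inequality with the sharp constant arising from It\^{o}) gives $\E_\zs{\theta} M_\zs{T}^{2m} \le (m(2m-1))^{m}\,\E_\zs{\theta}\langle M\rangle_\zs{T}^{m}$, while Jensen's inequality combined with Fubini produces
\begin{equation*}
\E_\zs{\theta}\Bigl(\int_\zs{0}^{T} X_\zs{s}\,\d s\Bigr)^{m} \le T^{m-1}\int_\zs{0}^{T} \E_\zs{\theta} X_\zs{s}^{m}\,\d s \le T^{m}\,\x_\zs{m}\,,
\end{equation*}
using the uniform moment bound from \eqref{MomentCIR-2}. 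The remaining two terms $x^{2m}$ and $\E_\zs{\theta} X_\zs{T}^{2m}$ are each bounded by $\x_\zs{2m}$ from the same reference.

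Combining these estimates I obtain
\begin{equation*}
\E_\zs{\theta}|\D_\zs{T}|^{2m} \le \frac{3^{2m-1}}{\theta^{2m}}\Bigl(2\,\x_\zs{2m} + \sigma^{m} (m(2m-1))^{m}\,T^{m}\,\x_\zs{m}\Bigr)\,,
\end{equation*}
and dividing by $T^{m}$ (using $T\ge 1$ so that $T^{-m}\le 1$ absorbs the first term) the right-hand side is dominated by $\theta^{-2m}\L_\zs{m}$, with $\L_\zs{m}$ exactly the constant in \eqref{sec:LL-m-1}. Taking the supremum over $\theta\in\Theta$ together with $\theta\ge \b_\zs{min}$ then yields \eqref{sec:CoIn-D-T-2-1}. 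The only nontrivial ingredient is the uniformity in $t\ge 0$ and in $\theta\in\Theta$ of the CIR moment constants $\x_\zs{q}$, which I would invoke directly from the appendix; modulo that, the argument reduces to a clean application of It\^{o}, BDG, and Jensen.
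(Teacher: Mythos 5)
Your proposal is correct and follows essentially the same route as the paper: the identical representation $\D_\zs{T}=\theta^{-1}\big(x-X_\zs{T}+\sqrt{\sigma}\,M_\zs{T}\big)$ obtained by integrating \eqref{sec:Intr.CIR}, the same $3^{2m-1}$ convexity splitting, the bounds $x^{2m}\le\x_\zs{2m}$ and $\E_\zs{\theta}X_\zs{T}^{2m}\le\x_\zs{2m}$, and the same martingale moment estimate with constant $(m(2m-1))^{m}$, which the paper simply imports as Lemma \ref{Le.sec:App--SI--1} (the Liptser--Shiryaev inequality \eqref{sec:uppBnd-SI-1}) while you rederive it via It\^{o}/BDG and Jensen. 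The only difference is this cosmetic one of citing versus reproving the stochastic-integral lemma, so the argument matches the paper's proof step for step, including the final absorption of the constant term using $T\ge 1$.
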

\begin{proof} 
	First note that   from \eqref{sec:Intr.CIR} that for any $T\ge 1$ we can write the term $\D_\zs{T}$ as
	\begin{equation}\label{Rep-D-TT--01210}
		\D_\zs{T}=\frac{X_\zs{0}-X_\zs{T}}{\theta}+\frac{\sqrt{\sigma}}{\theta}\int_\zs{0}^T\sqrt{X_\zs{s}}\d W_\zs{s}\,.
	\end{equation}
	\noindent 
	Now,  by   the upper bound \eqref{MomentCIR-2}, we obtain that for any  $m>1$ and $\theta\in\Theta$,
	\begin{align}\nonumber
		\E_\zs{\theta}\,\D_\zs{T}^{2m}
		&	\le 3^{2m-1}\,
		\left(\frac{X_\zs{0}^{2m}+\E_\zs{\theta}X^{2m}_\zs{T}}{\theta^{2m}}+  \left(\frac{\sqrt{\sigma}}{\theta}\right)^{2m}\,
		\,
		\E_\zs{\theta}\left(\int_\zs{0}^T\sqrt{X_\zs{s}}\d W_\zs{s}\right)^{2m}\right)\\[2mm] \label{eq:Delta-1}
		&		\le 
		\frac{3^{2m-1}}{\b^{2m}_\zs{min}}\,
		\left(2\x_\zs{2m}	
		+\sigma^{m}\,
		\E_\zs{\theta}\left(\int_\zs{0}^T\sqrt{X_\zs{s}}\d W_\zs{s}\right)^{2m}\right)
		\,.
	\end{align}
	\noindent  
	Now,  using here the upper bounds \eqref{MomentCIR-2} and \eqref{sec:uppBnd-SI-1} we can get that
	$$
	\sup_\zs{\theta\in\Theta}\,
	\E_\zs{\theta}\left(\int_\zs{0}^T\sqrt{X_\zs{s}}\d W_\zs{s}\right)^{2m}
	\le  
	\x_\zs{m}\,(m(2m-1))^{m}\,T^{m}\,
	\,.
	$$
	\noindent The use of this bound in \eqref{eq:Delta-1} 
	for $T\ge 1$
	implies the inequality \eqref{sec:CoIn-D-T-2-1}.
\end{proof} 

\noindent 
Now to study the deviations of the form
\eqref{Deviation-2} 
for any continuous and bounded $\bbr_\zs{+}\to\bbr$ function $\phi$ we set the general form deviation as
\begin{equation} 
	\label{sec:CoIn-1}
	\Delta_\zs{T}(\phi)=
	\int^{T}_\zs{0}\left(
	\phi(X_\zs{t})
	-\mu_\zs{\theta}(\phi)
	\right)\d t\,,
\end{equation}
where $\mu_\zs{\theta}(\phi)=\int_\zs{\bbr_\zs{+}}\,\phi(z)\,\q_\zs{\theta}(z) \d z$ and the density 
$\q_\zs{\theta}$
is defined
\eqref{ergdenssityCIR}
for  $\theta=(a,b)^\top$.

\begin{theorem}
	\label{Th.sec:CoIn-2}
	For any compact set {$\Theta\subset (\sigma/2,+\infty)$}, for any $m\ge1$ and any continuous and bounded $\bbr_\zs{+}\to\bbr$ function $\phi$
	\begin{equation}
		\label{sec:CoIn-2-1}
		\Delta^{*}_\zs{T}=
		\sup_\zs{T\ge 1}\,
		\sup_\zs{\theta\in\Theta}
		\frac{
			\E_\zs{\theta}\,
			|\Delta_\zs{T}(\phi) |^{2m}}{T^{m}}
		< 
		\frac{\phi_\zs{*}^{2m}}{\sigma^{2m}}
		\L_\zs{m}
		\sup_\zs{\theta\in\Theta} 
		\left(\frac{4\,e^{\beta }}{\alpha}+\frac{2^{\alpha}\Gamma(\alpha)}{ \beta^{\alpha}}
		+\,
		\frac{2^{\alpha}\,}{ \beta}\right)^{2m}
		\,,
	\end{equation}
	\noindent where $\L_\zs{m}$ is defined in \eqref{sec:LL-m-1} {and $\phi_\zs{*}=\sup_\zs{u\in\bbr_\zs{+}}|\phi(u)|$}.
\end{theorem}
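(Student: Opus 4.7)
The approach is the Poisson equation method for ergodic diffusions. Let $\cL$ denote the infinitesimal generator of \eqref{sec:Intr.CIR}, i.e.\ $\cL u(x) = (a-bx)u'(x) + (\sigma x/2) u''(x)$. Since $q_\zs{\theta}$ is reversible for $\cL$, the Poisson equation $\cL u(x) = \phi(x) - \mu_\zs{\theta}(\phi)$ reduces, after multiplying by $q_\zs{\theta}$, to $(\sigma x q_\zs{\theta}(x) u'(x)/2)' = q_\zs{\theta}(x)(\phi(x)-\mu_\zs{\theta}(\phi))$; one integration gives
\begin{equation*}
u'(x) = \frac{2 G(x)}{\sigma x q_\zs{\theta}(x)}\,, \qquad G(x) = \int_0^x (\phi(y) - \mu_\zs{\theta}(\phi)) q_\zs{\theta}(y)\,\d y\,,
\end{equation*}
the boundary term at $0$ vanishing thanks to $\alpha = 2a/\sigma > 1$. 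Fix the free constant by $u(0)=0$ and apply It\^o's formula to $u(X_\zs{t})$ on $[0,T]$: the drift term is exactly $\Delta_\zs{T}(\phi)$, hence
\begin{equation*}
\Delta_\zs{T}(\phi) \,=\, u(X_\zs{T}) - u(X_\zs{0}) - \sqrt{\sigma}\int_0^T \sqrt{X_\zs{s}}\, u'(X_\zs{s})\,\d W_\zs{s}\,.
\end{equation*}

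With this representation the moment bound is assembled in the same three-term fashion as in the proof of Theorem~\ref{Th.sec:CoIn-00}: apply $(a+b+c)^{2m} \le 3^{2m-1}(a^{2m}+b^{2m}+c^{2m})$, estimate $|u(X_\zs{0})|^{2m}$ and $\E_\zs{\theta}|u(X_\zs{T})|^{2m}$ via the pointwise bound on $|u|$ combined with the CIR moment bound \eqref{MomentCIR-2}, and estimate the martingale term by the Burkholder--Davis--Gundy inequality \eqref{sec:uppBnd-SI-1} applied to the stochastic integral, giving
\begin{equation*}
\E_\zs{\theta}\Big|\sqrt{\sigma}\int_0^T \sqrt{X_\zs{s}}\, u'(X_\zs{s})\, \d W_\zs{s}\Big|^{2m} \le (m(2m-1))^m \sigma^m\, \E_\zs{\theta}\Big(\int_0^T X_\zs{s}\, (u'(X_\zs{s}))^2\,\d s\Big)^m.
\end{equation*}
Using Jensen in time and the pointwise bound on $\sqrt{x}|u'(x)|$ together with \eqref{MomentCIR-2} produces a factor $T^m$ (for $T\ge 1$) and repackages the moment constants $\x_\zs{m},\x_\zs{2m}$ into the same $\L_\zs{m}$ of \eqref{sec:LL-m-1}, while the pointwise constant pulls out as $(\phi_\zs{*}/\sigma)^{2m}(\cdot)^{2m}$.

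The heart of the argument is therefore the uniform pointwise control of $|u|$ and $\sqrt{x}|u'(x)|$ in the three-term form $\frac{4e^\beta}{\alpha} + \frac{2^\alpha \Gamma(\alpha)}{\beta^\alpha} + \frac{2^\alpha}{\beta}$. The starting inequality is $|G(x)| \le 2\phi_\zs{*}\min(F(x),1-F(x))$ with $F$ the CDF of $q_\zs{\theta}$, and the analysis splits at $x = 1$. In the regime $x \le 1$, the elementary estimate $F(x) \le \beta^\alpha x^\alpha/(\alpha\Gamma(\alpha))$ combined with the explicit form of $x q_\zs{\theta}(x)$ gives $F(x)/(x q_\zs{\theta}(x)) \le e^{\beta x}/\alpha \le e^\beta/\alpha$, producing the first summand. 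In the regime $x \ge 1$ I would use the identity $(1-F(x))/q_\zs{\theta}(x) = \int_0^\infty (1+u/x)^{\alpha-1} e^{-\beta u}\,\d u$ obtained by the substitution $y = x+u$, and split the $u$-integral at $u = x$: on $u \le x$ use $(1+u/x)^{\alpha-1} \le 2^{\alpha-1}$, on $u \ge x$ use $(1+u/x)^{\alpha-1} \le (2u/x)^{\alpha-1}$. The two pieces integrate to $2^{\alpha-1}/\beta$ and $2^{\alpha-1}\Gamma(\alpha)/\beta^\alpha$ respectively, delivering the two remaining summands once we multiply through by $2\phi_\zs{*}/\sigma$ and insert $1/x^{\alpha-1/2}\le 1$.

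The main technical obstacle is precisely this large-$x$ control: a naive bound such as $1-F(x) \le 2^\alpha e^{-\beta x/2}$ (available through the substitution $y = 2z$) is useless because, divided by $x q_\zs{\theta}(x)$, it produces an expression growing like $e^{\beta x/2}$ on $[1,\infty)$. The representation $(1-F)/q_\zs{\theta}$ as an integral against $e^{-\beta u}$ avoids this exponential blow-up entirely, and isolating the two regimes $u\le x$ and $u\ge x$ inside this integral is what makes the explicit three-term form of the constant attainable.
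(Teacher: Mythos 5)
Your proposal is correct and is essentially the paper's own argument: the function $u'$ you obtain from the divergence form $(\tfrac{\sigma}{2}\,x\,q_\zs{\theta}(x)\,u'(x))'=q_\zs{\theta}(x)\,\wt{\phi}(x)$ coincides exactly with the paper's bounded solution $y$ of the first-order equation $\tfrac{\sigma}{2}\,x\,\dot{y}(x)+(a-bx)\,y(x)=\wt{\phi}(x)$ (the paper solves it by an integrating factor, using the zero-mean identity $\int_\zs{0}^{\infty}\wt{\phi}(u)u^{\alpha-1}e^{-\beta u}\d u=0$ to switch between $\int_\zs{0}^{x}$ and $\int_\zs{x}^{\infty}$, which is precisely your $G(x)=-\,(1-F(x))$-type rewriting), and the It\^o decomposition $\Delta_\zs{T}(\phi)=V(X_\zs{T})-V(X_\zs{0})-\sqrt{\sigma}\int_\zs{0}^{T}y(X_\zs{t})\sqrt{X_\zs{t}}\,\d W_\zs{t}$, the split of the pointwise bound at $x=1$, the large-$x$ substitution $y=x+u$ with the dyadic estimate, the $3^{2m-1}$ three-term inequality, and the use of \eqref{MomentCIR-2} and \eqref{sec:uppBnd-SI-1} all match the paper's proof step for step. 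The only cosmetic deviations are that you phrase the correction function through the CDF of $q_\zs{\theta}$ rather than the explicit kernel \eqref{sec:CoIn-2-33}, and that in the martingale term you bound $\sqrt{x}\,|u'(x)|$ pointwise whereas the paper keeps $\sqrt{X_\zs{t}}$ inside and pays the moment constant $\x_\zs{m}$ from \eqref{MomentCIR-2} — both routes deliver the same three-term constant in \eqref{sec:CoIn-2-1}.
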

\begin{proof} 
	We use the method proposed in \cite{GaltchoukPergamenshchikov2007}.  According to this method we need to find a bounded solution $y(x)$ of the differential equation
	\begin{equation} 
		\label{sec:CoIn-1-diif-1}
		\frac{\sigma}{2} x\, \dot{y}(x) + (a-b x) y(x)=\wt{\phi}(x)
		\quad\mbox{and}\quad
		\wt{\phi}(x)=\phi(x)-\mu_\zs{\theta}(\phi)\,.
	\end{equation}
	
	\noindent One can check directlyb that in this case such solution can be represented as
	\begin{equation}
		\label{sec:CoIn-2-33}
		y(x)=-
		\frac{2}{\sigma\,x^{\alpha}}
		\int^{+\infty}_\zs{x} \wt{\phi}(u)\,
		u^{\alpha-1}
		\,e^{-\beta (u-x)}\,\d u\,,\quad
		\alpha=\frac{2 a}{\sigma}
		\quad\mbox{and}\quad
		\beta=\frac{2 b}{\sigma}
		\,.
	\end{equation} 
	\noindent  Note that the function
	$\wt{\phi}(u)$ is bounded, i.e. 
	$$
	\sup_\zs{u\in\bbr_\zs{+}}\,|\wt{\phi}(u)|
	\le 2\phi_\zs{*}
	\quad\mbox{and}\quad
	\phi_\zs{*}=\sup_\zs{u\in\bbr_\zs{+}}|\phi(u)|\,.
	$$
\noindent Using this we obtain that for all $x\ge 1$
	\begin{align*}
		|y(x)| \le 
		\,
		\frac{4\phi_\zs{*}}{\sigma x^{\alpha}}
		\int^{+\infty}_\zs{x}\,
		u^{\alpha-1}
		\,e^{-\beta (u-x)}\,\d u
		&\le
		\,
		\frac{2^{\alpha}\phi_\zs{*}}{\sigma x^{\alpha}}
		\int^{+\infty}_\zs{0}\,
		z^{\alpha-1}
		\,e^{-\beta z}\,\d z
		+\,
		\frac{2^{\alpha}\,\phi_\zs{*}}{\sigma x}
		\int^{+\infty}_\zs{0}
		\,e^{-\beta z}\,\d z
		\,\\[2mm]
		&\le\,\frac{2^{\alpha}\phi_\zs{*}\Gamma(\alpha)}{\sigma x^{\alpha}\beta^{\alpha}}
		+\,
		\frac{2^{\alpha}\,\phi_\zs{*}}{\sigma x\beta}\,,
	\end{align*}
	\noindent  i.e.
	$$\sup_\zs{x\ge 1}|y(x)|\le \frac{2^{\alpha}\phi_\zs{*}\Gamma(\alpha)}{\sigma \beta^{\alpha}}
	+\,
	\frac{2^{\alpha}\,\phi_\zs{*}}{\sigma \beta}\,.
$$	
\noindent In the case, when $0<x<1$ taking into account, that 
	$$
	\int^{+\infty}_\zs{0} \wt{\phi}(u)\,
	u^{\alpha-1}
	\,e^{-\beta u}\,\d u
	=
	\int^{+\infty}_\zs{0} \phi(u)\,
	u^{\alpha-1}
	\,e^{-\beta u}\,\d u
	-
	\mu_\zs{\theta}(\phi)
	\int^{+\infty}_\zs{0} 
	u^{\alpha-1}
	\,e^{-\beta u}\,\d u
	=0
	\,,
	$$
	\noindent we can rewrite the solution \eqref{sec:CoIn-2-33} as
	$$
	y(x)=
	\frac{2e^{\beta x}}{\sigma\,x^{\alpha}}
	\int^{x}_\zs{0} \wt{\phi}(u)\,
	u^{\alpha-1}
	\,e^{-\beta u}\,\d u\,.
	$$
	\noindent So, for $0<x<1$
	$$
	|y(x)|
	\le
	\frac{4\phi_\zs{*}\,e^{\beta }}{\sigma\,x^{\alpha}}
	\int^{x}_\zs{0} 
	u^{\alpha-1}
	\,e^{-\beta u}\,\d u
	\le 
	\frac{4\phi_\zs{*}\,e^{\beta }}{\sigma\,\alpha}
	\,,
	$$
	\noindent and, therefore, for any $\theta\in\Theta$
	\begin{equation}
		\label{sec:UBND-y-*}
		y_\zs{*}=\sup_\zs{x\in\bbr_\zs{+}}\,|y(x)|\le \frac{\phi_\zs{*}}{\sigma}\left(\frac{4\,e^{\beta }}{\alpha}+\frac{2^{\alpha}\Gamma(\alpha)}{ \beta^{\alpha}}
		+\,
		\frac{2^{\alpha}\,}{ \beta}\right)\,.
	\end{equation}
	\noindent In view of the It\^o formula for the function $V(u)=\int^{u}_\zs{0} y(x)\d x$ and the equation \eqref{sec:CoIn-1-diif-1}, we obtain, that
	$$
	\Delta_\zs{T}(\phi)
	= V(X_\zs{T})
	-
	V(X_\zs{0})
	-
	\sqrt{\sigma}
	\int^{T}_\zs{0}\,y(X_\zs{t})\,\sqrt{X_\zs{t}} \,\d W_\zs{t}
	\,.
	$$
	
	\noindent
	Using now the moment inequality
	\eqref{MomentCIR-2}, we get, that for any $m\ge 1$
	$$
	\sup_\zs{T>0}
	\sup_\zs{\theta\in\Theta}\,
	\E_\zs{\theta} |V(X_\zs{T})|^{2m}
	\le 
	y^{2m}_\zs{*} 
	\sup_\zs{T>0}\,
	\sup_\zs{\theta\in\Theta}\,
	\E_\zs{\theta} X^{2m}_\zs{T}
	= y^{2m}_\zs{*} \x_\zs{2m}
	\,.
	$$
	\noindent Moreover,  through the bound
	\eqref{sec:uppBnd-SI-1}
	we obtain that for any $\theta \in \Theta$
\begin{align*}
	\E_\zs{\theta}
	\left(
	\int^{T}_\zs{0}\,y(X_\zs{t})\,\sqrt{X_\zs{t}} \,\d W_\zs{t}\right)^{2m}
	&\le
	\left(m(2m-1)\right)^{m}\,T^{m-1}\,y^{2m}_\zs{*}
	\int^{T}_\zs{0}\,\E_\zs{\theta} X^{m}_\zs{t}\,\d t\\[2mm]
&	\le 
	\left(m(2m-1)\right)^{m}\,T^{m}\,y^{2m}_\zs{*}\,
	\x_\zs{m}
	\,.
\end{align*}
	\noindent  
	Therefore, we can estimate the deviation \eqref{sec:CoIn-1} for $T\ge 1$
	as
	\begin{align*}
		\E_\zs{\theta}\,
		(\Delta_\zs{T}(\phi))^{2m}
		&\le 3^{2m-1}
		\left(
		\E_\zs{\theta}\,
		V^{2m}(X_\zs{T})
		+
		V^{2m}(X_\zs{0})
\right)\\[2mm]
		&		
		+3^{2m-1}\,
		\sigma^{m}\,
		\E_\zs{\theta}
		\left(
		\int^{T}_\zs{0}\,y(X_\zs{t})\,\sqrt{X_\zs{t}} \,\d W_\zs{t}\right)^{2m}\\[2mm]
		&\le 
		3^{2m-1}
		\left(
		2 \x_\zs{2m}
		+
		\sigma^{m}\,
		\left(m(2m-1)\right)^{m}\,
		\x_\zs{m}
		\right)\, y^{2m}_\zs{*}\,T^{m}\,.
	\end{align*}
	\noindent
	\noindent Using here the bound \eqref{sec:UBND-y-*}
	we obtain the inequality \eqref{sec:CoIn-2-1}.
\end{proof}

\section{Conclusion}\label{sec:conclusion}

\noindent 
In the conclusion, we emphasize that 

\begin{itemize}

	\item The truncated sequential estimation procedures are constructed and non-asymptotic mean square accuracies
	are obtained in
	\eqref{ineq:2},
	\eqref{ineq:1}
	and
	\eqref{sec:MultPr--fxd-ac-00}.  
	The properties for the mean  observations durations are studied in Propositions \ref{Pr.sec:SeqEst-1},  \ref{Pr.sec:SeqEst-1-a} and {\ref{Pr.sec:Mult-time-1-draft}}.

	\item  
	For the first time for continuous time statistical models,
	the optimality properties for the truncated sequential estimation
	procedures are established in the class of all possible sequential procedures with arbitrary bounded stopping times determining the duration of the observation.

\item	
To provide the optimality properties the proposed truncated sequential procedures use essentially fewer observations than classical non-sequential estimators based on the fixed non-random duration of observations 
(see Remarks \ref{Re.sec:opt--1} and \ref {Re.sec:opt-a-b--01}).

\end{itemize}

\bigskip

\section*{Acknowledgments}
%This research was supported by the French RIN project MaSyComB - LMRS, University of Rouen - Normandy.
The first two authors were benefited from the support of the ANR project "Efficient inference for large and high-frequency data" (ANR-21-CE40-0021). The last author was partially supported
by RSF Project No 24-11-00191 (National Research Tomsk State University). 

\section*{Conflicts of interest statement}
On behalf of all authors, the corresponding author states that there is no conflict of interest.

\bigskip
%%%%%%%%%%%%%%%%%%%%%%%%%%%%%%%%%%%%%%%%%%%%%%%%			
\setcounter{section}{0}
\renewcommand{\thesection}{\Alph{section}}

\medskip

\section{Appendix}\label{sec:Appendix}

\subsection{Local Asymptotic Normality property}\label{sec:App-0}
First we recall that a family of probability measures $(\P_\zs{\theta,T})_\zs{\theta\in\Theta, T>0}$ with $\Theta\subseteq \bbr^{k}$  is called to satisfy the Local Asymptotic Normality condition (LAN)
at a point $\theta_\zs{0}\in\Theta$
if there   is  a  scaling $k\times k$ matrix 
$\upsilon_\zs{T}$ going to zero as $T\to \infty$ such that for
any $u\in \bbr^{k}$ for which the point $\wt{\theta}=\theta_\zs{0}+\upsilon_\zs{T} u$ belongs to $\Theta$,
the Radon-Nikodym derivative has the following asymptotic representation 
\begin{equation}
	\label{sec:LAN-DEF}
	\ln
	\frac{\d \P_\zs{\wt{\theta},T}}{\d \P_\zs{\theta_\zs{0},T}}= u^{\top}\,\xi_\zs{T}
	-\frac{|u|^2}{2}+
	\r_\zs{T}(u)\,,
\end{equation}
where $|\cdot |$ is the euclidean norm in $\bbr^{k}$,
$$
\xi_\zs{T}  
\xrightarrow[T\to\infty]{\mathcal{L}(\P_\zs{\theta_\zs{0},T})}
\mathcal{N}(0,1_\zs{k})
\quad\mbox{and}\quad
\sup_\zs{\vert u\vert\le \u_\zs{*}}
\vert\r_\zs{T}(u)\vert
\xrightarrow[T\to\infty]{\P_\zs{\theta_\zs{0},T}}
0
\quad\mbox{for any}\quad
\u_\zs{*}>0\,.
$$
\noindent Here, $1_\zs{k}$ is the identity matrix of order $k$.

\noindent
To study the point-wise optimality properties
for sequential procedures
we will  use the lower bound obtained in
\cite[Proposition 6.1]{BenAlayaetal2025}
for the class
\eqref{sec:AsOpt-1}.

\begin{proposition}\label{Pr.sec:AsOp.-1}
	Assume that, LAN holds for  $\theta_\zs{0}$ from $\Theta$ with the scale matrix of the form $\upsilon_\zs{T}= (I(\theta_\zs{0}) T)^{-1/2}$, where $I(\theta_\zs{0})$ is some positive  definite  matrix. Then,
	for any  $\gamma>0$ for which $\{|\theta-\theta_\zs{0}|\le \gamma\}\subseteq \Theta$,   the following asymptotic lower bound holds true 
	\begin{equation}
		\label{sec:AsOpt-2}
		\underline{\lim}_\zs{T\to\infty}
		\inf_\zs{(\tau,\wh{\theta}_\zs{\tau})\in\cH_\zs{T}(\theta_\zs{0},\gamma)}\,
		\sup_\zs{| \theta-\theta_\zs{0}|<\gamma}\E_\zs{\theta}\,|\upsilon_\zs{T}^{-1}\,(\wh{\theta}_\zs{\tau}-\theta)|^2
		\ge k\,.
	\end{equation}
\end{proposition}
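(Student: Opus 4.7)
The statement is a H\'ajek--Le Cam local asymptotic minimax lower bound, adapted to the sequential setting where the observation duration is only bounded in expectation. The strategy is standard: bound the supremum from below by a Bayesian risk against a Gaussian prior on the local parameter, then use LAN to reduce asymptotically to a Gaussian shift experiment on $\bbr^k$ in which the minimax $L^2$ risk is $k$.

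\emph{Step 1 (localization and Bayesian reduction).} Set $w_\zs{T}=\upsilon_\zs{T}^{-1}(\wh{\theta}_\zs{\tau}-\theta_\zs{0})$ and reparametrize by $u=\upsilon_\zs{T}^{-1}(\theta-\theta_\zs{0})$, so that $\upsilon_\zs{T}^{-1}(\wh{\theta}_\zs{\tau}-\theta)=w_\zs{T}-u$. Since $\upsilon_\zs{T}\to 0$, for any fixed $\u_\zs{*}>0$ and all $T$ large enough the image $\{\theta_\zs{0}+\upsilon_\zs{T}u:|u|\le \u_\zs{*}\}$ is contained in $\{|\theta-\theta_\zs{0}|<\gamma\}$. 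For $\epsilon>0$ let $\pi_\zs{\epsilon}$ be the Gaussian measure $\mathcal{N}(0,\epsilon^{-1}1_\zs{k})$ truncated to $\{|u|\le \u_\zs{*}\}$ and renormalized. Bounding the sup by the average against $\pi_\zs{\epsilon}$ and changing measures via \eqref{sec:LAN-DEF} yields
\begin{equation*}
\sup_\zs{|\theta-\theta_\zs{0}|<\gamma}\E_\zs{\theta}|\upsilon_\zs{T}^{-1}(\wh{\theta}_\zs{\tau}-\theta)|^{2}\;\ge\;\E_\zs{\theta_\zs{0}}\!\int |w_\zs{T}-u|^{2}\,L_\zs{T}(u)\,\pi_\zs{\epsilon}(\d u),
\end{equation*}
where $L_\zs{T}(u)=\exp\big(u^{\top}\xi_\zs{T}-|u|^{2}/2+r_\zs{T}(u)\big)$ is the likelihood ratio restricted to $\cF_\zs{\tau}$ (by optional stopping applied to the Radon--Nikodym martingale; the restriction to $\cF_\zs{\tau}$ is legitimate because $\wh{\theta}_\zs{\tau}$ is $\cF_\zs{\tau}$-measurable).

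\emph{Step 2 (Gaussian shift limit).} Minimizing the quadratic integrand $\int|w-u|^{2}L_\zs{T}(u)\pi_\zs{\epsilon}(\d u)$ in $w\in\bbr^{k}$ gives exactly the posterior variance of the Bayes estimator. By LAN, on the event $\{\sup_\zs{|u|\le \u_\zs{*}}|r_\zs{T}(u)|\le \eta\}$ the quasi-posterior $L_\zs{T}(u)\pi_\zs{\epsilon}(\d u)$ is, up to a multiplicative factor $e^{\pm\eta}$, the posterior in the Gaussian shift model $u\mapsto \xi_\zs{T}+u$ with prior $\mathcal{N}(0,\epsilon^{-1}1_\zs{k})$. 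The posterior covariance in that conjugate model is $(1+\epsilon)^{-1}1_\zs{k}$, hence its trace is $k/(1+\epsilon)$. Taking first $\u_\zs{*}\to\infty$ to remove truncation, then $\eta\to 0$ (using that $r_\zs{T}\to 0$ in $\P_\zs{\theta_\zs{0}}$-probability uniformly in $|u|\le \u_\zs{*}$ and that $\xi_\zs{T}$ is tight), and finally $\epsilon\to 0$, the lower bound tends to $k$.

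\emph{Role of the constraint and main obstacle.} The constraint $\sup_\zs{|\theta-\theta_\zs{0}|<\gamma}\E_\zs{\theta}\tau\le T$ enters in two ways: it forces $\E_\zs{\theta_\zs{0}}\tau\le T$, which guarantees that the LAN expansion \emph{at horizon} $T$ is the correct localization, and it ensures absolute continuity of the stopped experiments so that $L_\zs{T}(u)$ restricted to $\cF_\zs{\tau}$ still satisfies the expansion asymptotically. The genuinely delicate point is not the Gaussian posterior computation but the interchange of expectation, integration in $u$, and the stopping-time reduction: one must check uniform integrability of $|w_\zs{T}-u|^{2}L_\zs{T}(u)$ under $\P_\zs{\theta_\zs{0}}$ on the set $\{|u|\le \u_\zs{*}\}$. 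A standard remedy is to truncate $w_\zs{T}$ at a level $N$, establish the bound with $w_\zs{T}\wedge N$, and remove $N$ at the end using tightness of the truncated-posterior mean; this is where the precise form $\upsilon_\zs{T}=(I(\theta_\zs{0})T)^{-1/2}$ of the LAN scale is essential, as it makes the limiting shift experiment isotropic with unit covariance, yielding exactly the value $k$.
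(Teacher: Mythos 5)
Your sketch is the fixed-horizon H\'ajek--Le Cam argument, but the proposition is a genuinely sequential statement, and the sketch breaks down exactly where the sequential structure enters. The change of measure in your Step 1 requires the integrand to be measurable with respect to the terminal $\sigma$-algebra on which $\d\P_\zs{\wt{\theta},T}/\d\P_\zs{\theta_\zs{0},T}$ acts; however, in the class $\cH_\zs{T}(\theta_\zs{0},\gamma)$ the stopping time satisfies only $\sup_\zs{|\theta-\theta_\zs{0}|<\gamma}\E_\zs{\theta}\tau\le T$, so $\P_\zs{\theta_\zs{0}}(\tau>T)$ may be positive and $\wh{\theta}_\zs{\tau}$ is \emph{not} a functional of the observations up to time $T$. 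Your appeal to optional stopping replaces $L_\zs{T}(u)$ by the stopped likelihood ratio $L_\zs{\tau}(u)$, but the LAN hypothesis of the proposition concerns only the family $(\P_\zs{\theta,T})$: it supplies no filtration, no likelihood-ratio martingale, and in particular no expansion for $L_\zs{\tau}(u)$. Even in the concrete CIR setting where such structure exists, the expansion of $L_\zs{\tau}(u)$ carries the \emph{random} information accumulated up to $\tau$ in place of the deterministic quadratic term $|u|^{2}/2$, so the conjugate-Gaussian posterior computation of your Step 2 (posterior covariance $(1+\epsilon)^{-1}1_\zs{k}$, trace $k/(1+\epsilon)$) does not follow. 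This is not a technicality: the entire content of the sequential bound is that a procedure may observe far beyond $T$ on some events, compensating by stopping early on others, and ruling out any gain from this requires using the mean-duration constraint quantitatively (e.g.\ via a Wald-type identity bounding the expected accumulated information by $I(\theta_\zs{0})T$, or an equivalent device); your sketch invokes the constraint only rhetorically (``it guarantees that the LAN expansion at horizon $T$ is the correct localization''), which is precisely the step that has no proof. Note also that if one repairs the argument by restricting to $\tau\le T$ a.s.\ (where the terminal-LR change of measure is legitimate), one lower-bounds the infimum over a strict subclass of $\cH_\zs{T}(\theta_\zs{0},\gamma)$; the infimum over the full class can only be smaller, so this does not yield \eqref{sec:AsOpt-2}.

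For comparison with the paper: the paper does not prove this proposition at all --- it is imported verbatim as Proposition 6.1 of \cite{BenAlayaetal2025}, where the sequential reduction is carried out. So the relevant benchmark is that reference, and your Bayesian/Gaussian-shift machinery (which is fine as far as it goes in the fixed-sample setting, including the van Trees-type posterior computation and the truncation/uniform-integrability remarks) would become a proof only after supplying the missing stopped-likelihood analysis, i.e.\ an expansion of $L_\zs{\tau}(u)$ together with a mechanism converting $\E_\zs{\theta}\tau\le T$ into a bound on the expected information at $\tau$.
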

\noindent This lower bound will be used to study local optimality properties, i.e. for small vicinities of $\theta_\zs{0}$. 
To study optimality properties over some arbitrary compact set $\Theta\subset\bbr^{k}$ for the class of sequential procedures defined in 
\eqref{sec;AssOpt-GlCls-1bis}
we need the following conditions.

\noindent $\C_\zs{1})$ {\sl There exists $\theta_\zs{0}\in\Theta$, such that $\{|\theta-\theta_\zs{0}|<\gamma\}\subset\Theta$ for all sufficiently small $\gamma>0$ and}
\begin{equation}
	\label{sec;AssOpt-MeanTimeCnd--09--1}
	\lim_\zs{\theta\to\theta_\zs{0}}
	\overline{\lim}_\zs{T\to\infty}
	\left|
	\frac{\m^{*}_\zs{T}(\theta) }{\m^{*}_\zs{T}(\theta_\zs{0})}
	-
	1
	\right|
	=0\,,
\end{equation}
\noindent where $\m^{*}_\zs{T}(\theta) =\E_\zs{\theta}\tau^{*}_\zs{T}$.

\medskip

\noindent $\C_\zs{2})$ {\sl There exists $\theta_\zs{0}\in\Theta$ for which the  LAN condition holds true
	for the scale matrix of the form   $\upsilon_\zs{T}= I^{-1/2}(\theta_\zs{0}) T^{-1/2}$ in which $I(\theta)$ is positive defined and  continuous matrix for any $\theta$ from some neighborhood of the point $\theta_\zs{0}$ 
	in $\Theta$. }\\

\noindent In the sequel we will use the following lower for this class   obtained in \cite[Theorem 6.4]{BenAlayaetal2025}.

\begin{theorem}\label{Th.sec:AsOp.-1bis}
	Assume that  the conditions $\C_\zs{1})$ -- $\C_\zs{2})$ hold true for some $\theta_\zs{0}$ from $\Theta$. Then,
	\begin{equation}
		\label{sec:AsOpt-33--01}
		\underline{\lim}_\zs{T\to\infty}
		\inf_\zs{(\tau, \wh{\theta}_\zs{\tau})\in\Xi^{*}_\zs{T}}\,
		\sup_\zs{ \theta\in\Theta}\,\m^{*}_\zs{T} (\theta)\,\E_\zs{\theta}\,|\, I^{1/2}(\theta) (\wh{\theta}_\zs{\tau}-\theta)|^2
		\ge k\,.
	\end{equation}
\end{theorem}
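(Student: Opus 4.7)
The plan is to reduce the global minimax lower bound \eqref{sec:AsOpt-33--01} to the local minimax bound of Proposition \ref{Pr.sec:AsOp.-1} by working in a shrinking ball around $\theta_\zs{0}$ and then undoing the localization using conditions $\C_\zs{1})$ and $\C_\zs{2})$. First, I would restrict the supremum over $\Theta$ to $\{|\theta-\theta_\zs{0}|<\gamma\}\subseteq\Theta$, which only decreases the left-hand side of \eqref{sec:AsOpt-33--01}. For any $(\tau,\wh{\theta}_\zs{\tau})\in\Xi^{*}_\zs{T}$ the definition \eqref{sec;AssOpt-GlCls-1bis} gives $\E_\zs{\theta}\tau\le \m^{*}_\zs{T}(\theta)$ for every $\theta\in\Theta$. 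Fixing $\epsilon>0$ and invoking $\C_\zs{1})$ to choose $\gamma$ small enough so that $\m^{*}_\zs{T}(\theta)\le(1+\epsilon)\m^{*}_\zs{T}(\theta_\zs{0})$ throughout the ball for $T$ large, I obtain $\sup_\zs{|\theta-\theta_\zs{0}|<\gamma}\E_\zs{\theta}\tau\le(1+\epsilon)\m^{*}_\zs{T}(\theta_\zs{0})=:S_\zs{T}$. Hence the procedure lies in the local class $\cH_\zs{S_\zs{T}}(\theta_\zs{0},\gamma)$ of \eqref{sec:AsOpt-1}.

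Next, Proposition \ref{Pr.sec:AsOp.-1} with scale matrix $\upsilon_\zs{S_\zs{T}}=(I(\theta_\zs{0})S_\zs{T})^{-1/2}$, legitimate by $\C_\zs{2})$, yields
\begin{equation*}
\underline{\lim}_\zs{T\to\infty}(1+\epsilon)\m^{*}_\zs{T}(\theta_\zs{0})\,\inf_\zs{\Xi^{*}_\zs{T}}\,\sup_\zs{|\theta-\theta_\zs{0}|<\gamma}\E_\zs{\theta}\big|I^{1/2}(\theta_\zs{0})(\wh{\theta}_\zs{\tau}-\theta)\big|^{2}\ge k\,.
\end{equation*}
The remaining step is to convert this $\theta_\zs{0}$-anchored bound into the one with $I(\theta)$ and $\m^{*}_\zs{T}(\theta)$ that appears in \eqref{sec:AsOpt-33--01}. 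I would use the continuity and positive-definiteness of $I(\cdot)$ from $\C_\zs{2})$: for any $\delta>0$, the bound $I(\theta_\zs{0})\le (1+\delta) I(\theta)$ holds in the positive-semidefinite sense for $|\theta-\theta_\zs{0}|<\gamma$ if $\gamma$ is small, so $|I^{1/2}(\theta_\zs{0})x|^{2}\le(1+\delta)|I^{1/2}(\theta)x|^{2}$. Symmetrically, $\C_\zs{1})$ gives $\m^{*}_\zs{T}(\theta_\zs{0})\le(1+\epsilon)\m^{*}_\zs{T}(\theta)$ on the same ball for $T$ large.

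Combining these two inequalities pointwise in $\theta$ and then taking sup over the ball and inf over $\Xi^{*}_\zs{T}$, I arrive at
\begin{equation*}
\underline{\lim}_\zs{T\to\infty}\,\inf_\zs{\Xi^{*}_\zs{T}}\,\sup_\zs{\theta\in\Theta}\,\m^{*}_\zs{T}(\theta)\,\E_\zs{\theta}\big|I^{1/2}(\theta)(\wh{\theta}_\zs{\tau}-\theta)\big|^{2}\ge \frac{k}{(1+\epsilon)^{2}(1+\delta)}\,,
\end{equation*}
after enlarging the supremum from the ball to all of $\Theta$. Letting $\epsilon,\delta\downarrow 0$ finishes the argument.

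The main technical obstacle is the uniformity hidden in $\C_\zs{1})$: it is written as the iterated limit $\lim_\zs{\theta\to\theta_\zs{0}}\overline{\lim}_\zs{T}$, whereas the reduction to $\cH_\zs{S_\zs{T}}(\theta_\zs{0},\gamma)$ tacitly needs $\m^{*}_\zs{T}(\theta)/\m^{*}_\zs{T}(\theta_\zs{0})\to 1$ uniformly over $|\theta-\theta_\zs{0}|<\gamma$ as $T\to\infty$ for a fixed $\gamma$. I would handle this by carefully ordering the quantifiers, first freezing $\epsilon>0$, then selecting $\gamma$ via $\C_\zs{1})$ and the continuity of $I(\cdot)$, and only afterwards letting $T\to\infty$, so that both the class inclusion and the matrix comparison remain valid uniformly on the chosen ball. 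Once this bookkeeping is done, the proof is a direct application of the local Hájek--Le Cam lower bound supplied by Proposition \ref{Pr.sec:AsOp.-1}.
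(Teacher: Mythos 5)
A preliminary remark: the paper contains no proof of Theorem \ref{Th.sec:AsOp.-1bis} at all --- it is quoted from \cite[Theorem 6.4]{BenAlayaetal2025} --- so your attempt can only be judged on its own merits, not against an in-paper argument. Your architecture is the natural (and surely the intended) one, and most of its steps are sound: restricting the supremum to the ball $\{|\theta-\theta_\zs{0}|<\gamma\}$ only weakens the claim; the constraint in \eqref{sec;AssOpt-GlCls-1bis} indeed gives $\E_\zs{\theta}\tau\le \m^{*}_\zs{T}(\theta)$ pointwise; applying Proposition \ref{Pr.sec:AsOp.-1} along the time scale $S_\zs{T}=(1+\epsilon)\m^{*}_\zs{T}(\theta_\zs{0})$ is legitimate because the family defining the class \eqref{sec;AssOpt-GlCls-1bis} is assumed to satisfy $\E_\zs{\theta}\tau^{*}_\zs{T}\to\infty$, so $S_\zs{T}\to\infty$; the positive-semidefinite comparison $I(\theta_\zs{0})\le(1+\delta)I(\theta)$ on a small ball is valid by the continuity required in $\C_\zs{2})$; and letting $\epsilon,\delta\downarrow 0$ at the end is unobjectionable since the left-hand side of \eqref{sec:AsOpt-33--01} does not depend on them.

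The genuine gap is precisely the uniformity issue you flag but then dismiss with ``careful ordering of the quantifiers'' --- no ordering of quantifiers can repair it. The inclusion $\Xi^{*}_\zs{T}\subseteq\cH_\zs{S_\zs{T}}(\theta_\zs{0},\gamma)$ requires $\sup_\zs{|\theta-\theta_\zs{0}|<\gamma}\m^{*}_\zs{T}(\theta)\le(1+\epsilon)\m^{*}_\zs{T}(\theta_\zs{0})$ for \emph{all sufficiently large} $T$, and your later step needs the mirror bound $\m^{*}_\zs{T}(\theta_\zs{0})\le(1+\epsilon)\m^{*}_\zs{T}(\theta)$ with the same uniformity. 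Condition $\C_\zs{1})$ as written in \eqref{sec;AssOpt-MeanTimeCnd--09--1} only yields, for each \emph{fixed} $\theta$ in the ball, $\overline{\lim}_\zs{T\to\infty}|\m^{*}_\zs{T}(\theta)/\m^{*}_\zs{T}(\theta_\zs{0})-1|\le\epsilon$, with a $T$-threshold that may depend on $\theta$; pointwise eventual bounds do not bound the supremum. At the level of the stated condition this implication genuinely fails: for instance $\m^{*}_\zs{T}(\theta)=\m^{*}_\zs{T}(\theta_\zs{0})\bigl(1+\Chi_\zs{\{1/(2T)<|\theta-\theta_\zs{0}|<1/T\}}\bigr)$ satisfies \eqref{sec;AssOpt-MeanTimeCnd--09--1} at every $\theta$, yet $\sup_\zs{|\theta-\theta_\zs{0}|<\gamma}\m^{*}_\zs{T}(\theta)/\m^{*}_\zs{T}(\theta_\zs{0})=2$ for all $T>1/\gamma$, so the class inclusion never holds with $\epsilon<1$. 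To close the argument you must strengthen $\C_\zs{1})$ to its uniform version, $\lim_\zs{\gamma\to 0}\overline{\lim}_\zs{T\to\infty}\sup_\zs{|\theta-\theta_\zs{0}|<\gamma}|\m^{*}_\zs{T}(\theta)/\m^{*}_\zs{T}(\theta_\zs{0})-1|=0$. This is harmless in the present paper: everywhere the theorem is invoked, $\C_\zs{1})$ is verified through the limits \eqref{sec:Sqe-MeanTime-1bis}, \eqref{sec:Sqe-MeanTime-1bis-a} and \eqref{sec:mult-tmi-1-unif-**}, which hold uniformly over compact $\Theta$ and hence deliver exactly the uniform ratio control your reduction needs; with that reading, the rest of your proof goes through.
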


\medskip

\subsection{Moment properties of the CIR process}\label{sec:App}
\noindent

\noindent Now we study the moment properties for the stable CIR processes
\begin{lemma}\label{Le.momentCIR-1}
	For any $q>- 2a/\sigma$ and compact set $\Theta\subset ]\sigma/2,+\infty[\, \times \, ]0,+\infty[$
	\begin{equation}\label{MomentCIR-2}
		\x_\zs{q}
		=
		\sup_\zs{t\ge 0}\,
		\sup_\zs{\theta\in\Theta}\,
		\E_\zs{\theta} X^{q}_\zs{t}
		<\infty\,.
	\end{equation}
	%and, moreover,  for any $0<\epsilon<\inf_\zs{(a,b)\in\Theta}(2a/\sigma-1)$
	% \begin{equation}\label{MomentCIR-2}
		% \sup_\zs{t\ge 0}\,\sup_\zs{\theta\in\Theta}\,\E_\zs{\theta} X^{-1-\epsilon}_\zs{t}<\infty\,.\end{equation}
\end{lemma}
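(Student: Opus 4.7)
The plan is to split the proof into two cases according to the sign of $q$, handling positive and negative fractional moments separately. The positive‐moment case will rely on Itô's formula and a Gronwall/variation‐of‐constants argument, while the negative‐moment case will exploit the explicit non‐central chi–squared transition law of the CIR process, using the assumption $a>\sigma/2$ (so that $X_\zs{t}>0$ a.s. and the origin is unattainable).

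First I would treat the case $q\ge 0$, proceeding by induction on $n=\lceil q\rceil$. For integer $n\ge 1$, Itô's formula applied to $f(x)=x^{n}$ (with standard localization) gives
\begin{equation*}
\frac{\d }{\d t}\E_\zs{\theta}X^{n}_\zs{t}
= n\Big(a+\tfrac{(n-1)\sigma}{2}\Big)\E_\zs{\theta}X^{n-1}_\zs{t} - n b\,\E_\zs{\theta}X^{n}_\zs{t}\,,
\end{equation*}
whose solution by variation of constants is
\begin{equation*}
\E_\zs{\theta}X^{n}_\zs{t} = x^{n}e^{-nbt}+n\Big(a+\tfrac{(n-1)\sigma}{2}\Big)\int_\zs{0}^{t}e^{-nb(t-s)}\E_\zs{\theta}X^{n-1}_\zs{s}\,\d s.
\end{equation*}
Using the inductive hypothesis that $\sup_{s,\theta}\E_\zs{\theta}X^{n-1}_\zs{s}<\infty$, the integral is bounded by $(nb)^{-1}\sup_\zs{s,\theta}\E_\zs{\theta}X^{n-1}_\zs{s}$, and since $b\ge b_\zs{min}>0$ uniformly on the compact $\Theta$, one obtains $\sup_\zs{t,\theta}\E_\zs{\theta}X^{n}_\zs{t}<\infty$. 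For non‐integer $q>0$, Jensen's inequality gives $\E_\zs{\theta}X^{q}_\zs{t}\le(\E_\zs{\theta}X^{n}_\zs{t})^{q/n}$ with $n=\lceil q\rceil$.

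For the case $-2a/\sigma<q<0$, I would use the well‐known representation that, conditional on $X_\zs{0}=x$, the law of $X_\zs{t}$ is that of $c(t)\,Y_\zs{t}$, where $c(t)=\sigma(1-e^{-bt})/(4b)$, $Y_\zs{t}\sim\chi^{2}_\zs{d}(\lambda_\zs{t})$ with $d=4a/\sigma=2\alpha$ and $\lambda_\zs{t}=4bx e^{-bt}/(\sigma(1-e^{-bt}))$. The $q$‐th moment of a non‐central chi‐squared then expands as
\begin{equation*}
\E\big(\chi^{2}_\zs{d}(\lambda)\big)^{q}=e^{-\lambda/2}\sum_\zs{k\ge 0}\frac{(\lambda/2)^{k}}{k!}\,\frac{2^{q}\Gamma(d/2+k+q)}{\Gamma(d/2+k)},
\end{equation*}
which converges precisely when $d/2+q=\alpha+q>0$, i.e. $q>-2a/\sigma$. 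To get uniform bounds in $t\ge 0$ and $\theta\in\Theta$, I would split at some $t_\zs{0}>0$: for $t\ge t_\zs{0}$, the factor $c(t)$ stays in a compact subset of $(0,+\infty)$ and $\lambda_\zs{t}$ is bounded, so the series is uniformly bounded; for $t<t_\zs{0}$, $\lambda_\zs{t}\to\infty$ like $1/c(t)$ as $t\to 0$, and the asymptotics $\E(\chi^{2}_\zs{d}(\lambda))^{q}\sim \lambda^{q}$ combine with $c(t)^{q}$ to give $c(t)^{q}\lambda_\zs{t}^{q}\sim(x e^{-bt})^{q}\to x^{q}$, which is harmless since $x>0$ is fixed. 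Uniformity in $\theta\in\Theta$ follows from continuity of all quantities in $(a,b)$ on the compact $\Theta\subset(\sigma/2,+\infty)\times(0,+\infty)$.

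The main obstacle will be the case $q<0$ near $t=0$: naive bounds using $c(t)^{q}$ blow up, and one must exploit the fact that the non‐centrality parameter $\lambda_\zs{t}$ grows at exactly the compensating rate. The cleanest way to make this precise is to couple the two factors via $c(t)\lambda_\zs{t}=x e^{-bt}$ and reduce the small‐$t$ analysis to the estimate $\E(\chi^{2}_\zs{d}(\lambda))^{q}\le C_\zs{q,d}(1+\lambda)^{q}$ for $\lambda$ large, which follows from the series above by comparing $\Gamma(d/2+k+q)/\Gamma(d/2+k)\sim k^{q}$ together with concentration of the Poisson$(\lambda/2)$ weights around $k=\lambda/2$.
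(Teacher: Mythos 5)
Your proposal is correct, but it takes a genuinely different route from the paper, which offers no self-contained argument at all: the paper simply invokes Proposition 3 of Ben Alaya and Kebaier (2013). In that reference the bound is extracted from the explicit Laplace transform of $X_\zs{t}$ — for negative orders one writes $\E_\zs{\theta}X_\zs{t}^{-p}=\Gamma(p)^{-1}\int_\zs{0}^{\infty}u^{p-1}\,\E_\zs{\theta}e^{-uX_\zs{t}}\,\d u$ and uses the closed-form exponential-affine transform, the integrability at $u\to\infty$ being exactly the condition $p<2a/\sigma$. You instead reconstruct the result from scratch: an Itô/variation-of-constants induction for positive integer moments (with Jensen filling in non-integer $q>0$), and the scaled non-central chi-squared representation $X_\zs{t}\sim c(t)\chi^{2}_\zs{d}(\lambda_\zs{t})$ with the series formula for fractional moments in the range $q>-d/2=-2a/\sigma$. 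Your key observation $c(t)\lambda_\zs{t}=xe^{-bt}$, which cancels the $c(t)^{q}$ blow-up against the $\lambda_\zs{t}^{q}$ decay as $t\to 0$, is exactly the right mechanism and is essentially equivalent to the role played by the initial-condition term in the Laplace-transform exponent in the cited proof. What your route buys is transparency about where the threshold $-2a/\sigma$ comes from (convergence of the $k=0$ term of the series) and a uniform treatment of both moment signs; what the citation buys is brevity and avoidance of the large-$\lambda$ tail analysis you need for $\E(\chi^{2}_\zs{d}(\lambda))^{q}\le C_\zs{q,d}(1+\lambda)^{q}$.

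Two points deserve explicit care if you write this out in full. First, for the supremum over the compact $\Theta$ to be finite you must read the hypothesis as $q>-2a_\zs{min}/\sigma$ with $a_\zs{min}=\min_\zs{\theta\in\Theta}a$ (the lemma statement shares this ambiguity), and the constant $C_\zs{q,d}$ must be taken uniform over $d\in[4a_\zs{min}/\sigma,4a_\zs{max}/\sigma]$; for $q<0$ this is routine since the ratio $\Gamma(d/2+k+q)/\Gamma(d/2+k)$ is decreasing in $k$, so the series is dominated by its central ($\lambda=0$) value, and the large-$\lambda$ decay follows from splitting the Poisson sum at $k=\lambda/4$. Second, the derivation of the moment ODE requires a priori finiteness of $\E_\zs{\theta}X_\zs{t}^{n}$; your localization remark should be backed by the standard stopping-time plus Fatou/Gr\"onwall argument, using that the $-nbX^{n}$ drift contribution has a favorable sign. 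With these details supplied, your proof is complete.
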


\noindent The proof is given in Proposition 3 from \cite{BAK2013}.

\begin{remark}\label{Re.sec:App-121}
It should be noted that
for $q>0$
 the bound \eqref{MomentCIR-2} holds true
 for   any compact set $\Theta\subset ]0,+\infty[\, \times \, ]0,+\infty[$.
\end{remark}

\subsection{Properties of stochastic integrals}\label{sec:App-1--00}
\noindent 
Now we give the upper bound for the moments of the stochastic integrals.
\begin{lemma}\label{Le.sec:App--SI--1}
	Let $(f_\zs{t})_\zs{0\le t \le T}$ be adapted process such that for some $m>1$
	$$
	\E\,
	\int^{T}_\zs{0}\, f^{2m}_\zs{t}\d t\,
	<\infty\,.
	$$
	\noindent Then 
	\begin{equation}\label{sec:uppBnd-SI-1}
		\E\,\left( \int^{T}_\zs{0}\, f_\zs{t}\d W_\zs{t}\right)^{2m}\le\,(m(2m-1))^{m}\,T^{m-1}\,
		\int^{T}_\zs{0}\,  \E\,f^{2m}_\zs{t}\d t\,.
	\end{equation}
\end{lemma}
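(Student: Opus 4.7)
The plan is to prove this as a version of the standard Burkholder-Davis-Gundy-type estimate for even moments of stochastic integrals, obtained via Itô's formula applied to $x\mapsto x^{2m}$ and two applications of Hölder's inequality.

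First I would set $M_t=\int_0^t f_s\,\d W_s$ and apply the Itô formula to $M_t^{2m}$, which yields
\begin{equation*}
M_t^{2m}=2m\int_0^t M_s^{2m-1}f_s\,\d W_s+m(2m-1)\int_0^t M_s^{2m-2}f_s^2\,\d s.
\end{equation*}
Introducing the localizing sequence $\sigma_n=\inf\{t\ge 0:\int_0^t f_u^2\d u\ge n\}$ and taking expectations (the first integral is a true martingale up to $\sigma_n$), I would pass to the limit via monotone/dominated convergence, using the hypothesis $\E\int_0^T f_t^{2m}\d t<\infty$ together with the BDG inequality to justify integrability, and obtain
\begin{equation*}
\E M_T^{2m}=m(2m-1)\int_0^T\E\bigl[M_s^{2m-2}f_s^2\bigr]\,\d s.
\end{equation*}

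Next, to separate the factor $M_s^{2m-2}$ from $f_s^2$, I would apply Hölder's inequality with exponents $m/(m-1)$ and $m$ under the expectation, giving
\begin{equation*}
\E\bigl[M_s^{2m-2}f_s^2\bigr]\le (\E M_s^{2m})^{(m-1)/m}(\E f_s^{2m})^{1/m}.
\end{equation*}
Since $M$ is a martingale and $x\mapsto x^{2m}$ is convex, $(M_t^{2m})_{t\ge 0}$ is a submartingale, so $s\mapsto \E M_s^{2m}$ is non-decreasing. Using this monotonicity to bound $(\E M_s^{2m})^{(m-1)/m}\le (\E M_T^{2m})^{(m-1)/m}$ and factoring out of the integral leads to
\begin{equation*}
\E M_T^{2m}\le m(2m-1)(\E M_T^{2m})^{(m-1)/m}\int_0^T(\E f_s^{2m})^{1/m}\,\d s,
\end{equation*}
which after dividing yields $(\E M_T^{2m})^{1/m}\le m(2m-1)\int_0^T(\E f_s^{2m})^{1/m}\d s$.

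Finally I would apply Hölder's inequality a second time to the deterministic integral in $s$ with exponents $m/(m-1)$ and $m$:
\begin{equation*}
\int_0^T(\E f_s^{2m})^{1/m}\,\d s\le T^{(m-1)/m}\left(\int_0^T\E f_s^{2m}\,\d s\right)^{1/m},
\end{equation*}
and raise to the $m$-th power to obtain the desired bound \eqref{sec:uppBnd-SI-1}. The only delicate point is the justification of taking expectations in the Itô expansion; this is the main (though standard) obstacle and is handled cleanly by the stopping argument combined with the assumed integrability of $\int_0^T f_t^{2m}\d t$, after which one may first establish the inequality with $T$ replaced by $T\wedge\sigma_n$ and let $n\to\infty$.
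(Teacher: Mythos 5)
Your argument is correct: the paper gives no proof of this lemma, citing instead Lemma 4.12 of Liptser and Shiryaev, and the classical proof there is precisely your scheme — Itô's formula for $x\mapsto x^{2m}$, Hölder under the expectation with exponents $m/(m-1)$ and $m$, monotonicity of $s\mapsto \E M_s^{2m}$, division, and a second Hölder in time (with truncation of $f$ in place of your localization by stopping times, an immaterial difference). The only loose point, asserting the identity $\E M_T^{2m}=m(2m-1)\int_0^T\E\bigl[M_s^{2m-2}f_s^2\bigr]\,\d s$ before finiteness of the left-hand side is known, is exactly the one you flag yourself, and your fix (prove the inequality for $T\wedge\sigma_n$, where all moments are finite since $\int_0^{\sigma_n}f_u^2\,\d u\le n$, then let $n\to\infty$ via Fatou) closes it.
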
 

\noindent This lemma is shown in \cite[Lemma 4.12]{LipShir1}.

\subsection{Proof of Lemma \ref{Le.sec:App-33}}

\noindent First of all we need the following result shown in
\cite[Lemma A7]{BenAlayaetal2025}.
\begin{lemma}\label{Le.sec:App-2}
	For any $r>2$ and any compact set $\Theta\subset (\sigma/2,+\infty)\, \times\, (0,+\infty)$ for the matrices 
	\eqref{sec:MultPr-1}		
	and \eqref{sec:Mlt.cond-1} the following property holds true
	\begin{equation}\label{sec:uppBnd-Dev-1}
		\d_\zs{*}=
		\sup_\zs{z\ge 1}
		\sup_\zs{\theta\in\Theta}\,
		\E_\zs{\theta}\,
		\left(
		\sqrt{z}\,
		\left|\frac{ G_\zs{\t_\zs{z}}}{z}-\frac{F}{\tr F}\right|\right)^{r}<\infty\,.
	\end{equation}
\end{lemma}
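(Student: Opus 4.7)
The plan is to decompose the matrix difference into two pieces and reduce everything to controlling two scalar deviation integrals. Write
$$
\frac{G_\zs{\t_\zs{z}}}{z}-\frac{F}{\tr\,F}
=
\frac{1}{z}\bigl(G_\zs{\t_\zs{z}}-\t_\zs{z}\,F\bigr)
+\left(\frac{\t_\zs{z}}{z}-\frac{1}{\tr\,F}\right)F\,.
$$
The off-diagonal entries of $G_\zs{\t_\zs{z}}-\t_\zs{z}\,F$ are identically zero, while the diagonal entries are exactly the ergodic deviations
$$
M_\zs{z}=\int_\zs{0}^{\t_\zs{z}}\bigl(X^{-1}_\zs{s}-f_\zs{1}\bigr)\d s
\quad\mbox{and}\quad
N_\zs{z}=\int_\zs{0}^{\t_\zs{z}}\bigl(X_\zs{s}-f_\zs{2}\bigr)\d s\,,
$$
with $f_\zs{1}=2b/(2a-\sigma)$ and $f_\zs{2}=a/b$. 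Moreover, the defining identity $\int_\zs{0}^{\t_\zs{z}}(X_\zs{s}+X^{-1}_\zs{s})\d s=z$ immediately gives
$z-\t_\zs{z}\tr F=M_\zs{z}+N_\zs{z}$, hence
$\t_\zs{z}/z-1/\tr F=-(M_\zs{z}+N_\zs{z})/(z\tr F)$. Consequently everything reduces to showing
\begin{equation}\label{eq:keybound-plan}
\sup_\zs{z\ge 1}\sup_\zs{\theta\in\Theta} z^{-r/2}\bigl(\E_\zs{\theta}|M_\zs{z}|^{r}+\E_\zs{\theta}|N_\zs{z}|^{r}\bigr)<\infty\,.
\end{equation}

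To obtain \eqref{eq:keybound-plan}, I would use the It\^o representations already exploited elsewhere in the paper. Applied to $X_\zs{t}$ and to $\ln X_\zs{t}$ at the stopping time $\t_\zs{z}$, these give
$$
N_\zs{z}=\frac{X_\zs{0}-X_\zs{\t_\zs{z}}}{b}+\frac{\sqrt{\sigma}}{b}\int_\zs{0}^{\t_\zs{z}}\sqrt{X_\zs{s}}\,\d W_\zs{s}
$$
and
$$
M_\zs{z}=\frac{2(\ln X_\zs{\t_\zs{z}}-\ln X_\zs{0})}{2a-\sigma}-\frac{2\sqrt{\sigma}}{2a-\sigma}\int_\zs{0}^{\t_\zs{z}}X^{-1/2}_\zs{s}\,\d W_\zs{s}\,.
$$
The two stochastic integrals have predictable quadratic variations $\int_\zs{0}^{\t_\zs{z}}X_\zs{s}\d s$ and $\int_\zs{0}^{\t_\zs{z}}X^{-1}_\zs{s}\d s$ respectively, and each is bounded by $z$ by construction of $\t_\zs{z}$. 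The Burkholder–Davis–Gundy inequality therefore delivers the optimal order $\OO(z^{r/2})$ for their $L^{r}$ norms, uniformly in $\theta\in\Theta$.

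The main obstacle is then to obtain, uniformly over $z\ge 1$ and $\theta\in\Theta$, the moment bounds
$$
\E_\zs{\theta}X^{r}_\zs{\t_\zs{z}}=\OO(z^{r/2})
\quad\mbox{and}\quad
\E_\zs{\theta}|\ln X_\zs{\t_\zs{z}}|^{r}=\OO(z^{r/2})\,.
$$
These do not follow directly from Lemma \ref{Le.momentCIR-1}, which only controls $\E_\zs{\theta}X^{q}_\zs{t}$ at deterministic times. I would handle this by splitting on $\{\t_\zs{z}\le \Lambda z\}$ and $\{\t_\zs{z}>\Lambda z\}$ for a fixed constant $\Lambda>\max_\zs{\theta\in\Theta}(\tr F)^{-1}$. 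On the first event one estimates $X_\zs{\t_\zs{z}}\le \sup_\zs{t\le \Lambda z}X_\zs{t}$ and then uses the semimartingale decomposition of $X$ together with Doob's maximal inequality and Lemma \ref{Le.momentCIR-1}. For the $\ln$ moment, one uses $|\ln x|\le C(x^{\epsilon}+x^{-\epsilon})$ with small $\epsilon$ such that $-r\epsilon>-2a/\sigma$ so that $\E_\zs{\theta} X_\zs{t}^{\pm r\epsilon}$ is controlled by Lemma \ref{Le.momentCIR-1}. On the complement $\{\t_\zs{z}>\Lambda z\}$, one applies H\"older's inequality combined with the polynomial tail bound for $\t_\zs{z}$ established in Proposition \ref{Pr.sec:Mult-time-1tt} (cf.\ \eqref{Upp-st-Tim--101} and the concentration bound \eqref{sec:--Up__Bnd--1}), choosing the auxiliary moment orders large enough so that the resulting contribution is of order $\OO(z^{-\kappa})$ for arbitrarily large $\kappa$. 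Combining these ingredients yields \eqref{eq:keybound-plan}, and plugging back into the initial decomposition completes the proof.
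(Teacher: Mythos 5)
You should first be aware that this paper contains no proof of Lemma \ref{Le.sec:App-2} to compare against: it is imported verbatim, with the sentence ``shown in \cite[Lemma A7]{BenAlayaetal2025}'', from the companion paper. So your argument has to stand on its own. Its skeleton is correct and well adapted to the model: the decomposition of $G_\zs{\t_\zs{z}}/z-F/\tr F$ into $z^{-1}(G_\zs{\t_\zs{z}}-\t_\zs{z}F)$ plus a trace-deficit term, the observation that the off-diagonal entries of $G_\zs{\t_\zs{z}}-\t_\zs{z}F$ vanish identically, the exact identity $z-\t_\zs{z}\,\tr F=M_\zs{z}+N_\zs{z}$ (valid since $\int_\zs{0}^{\t_\zs{z}}(X_\zs{s}+X^{-1}_\zs{s})\,\d s=z$ by continuity and a.s.\ finiteness of $\t_\zs{z}$), and the It\^o representations of $M_\zs{z}$, $N_\zs{z}$ are all right. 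The key mechanism — that both quadratic variations $\int_\zs{0}^{\t_\zs{z}}X_\zs{s}\d s$ and $\int_\zs{0}^{\t_\zs{z}}X^{-1}_\zs{s}\d s$ are capped by $z$ by the very definition \eqref{sec:MultPr-1-00}, so that Burkholder--Davis--Gundy gives the martingale terms order $\OO(z^{r/2})$ uniformly on the compact $\Theta$ — is exactly the right one, and correctly reduces the lemma to the boundary terms $\E_\zs{\theta}X^{r}_\zs{\t_\zs{z}}$ and $\E_\zs{\theta}|\ln X_\zs{\t_\zs{z}}|^{r}$.

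It is at those boundary terms that your sketch has genuine gaps. First, on $\{\t_\zs{z}\le \Lambda z\}$ the one-shot ``semimartingale decomposition plus Doob'' bound does not give what you need: dropping the negative drift leaves $\sup_\zs{t\le \Lambda z}X_\zs{t}\le X_\zs{0}+a\Lambda z+\sqrt{\sigma}\sup_\zs{t\le \Lambda z}\big|\int_\zs{0}^{t}\sqrt{X_\zs{s}}\,\d W_\zs{s}\big|$, and the deterministic term $a\Lambda z$ alone contributes $\OO(z^{r})$, not $\OO(z^{r/2})$. This is repairable — split $[0,\Lambda z]$ into unit blocks, bound $\E_\zs{\theta}\sup_\zs{k\le t\le k+1}X^{r}_\zs{t}$ uniformly in $k$ using Lemma \ref{Le.momentCIR-1} at the left endpoint, and sum to get $\OO(z)\subset\OO(z^{r/2})$ — but the repair must be stated, because the estimate as you describe it fails. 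Second, and more seriously, the negative part of $\ln X_\zs{\t_\zs{z}}$ is not controlled by anything you invoke: the bound $|\ln x|\le C(x^{\epsilon}+x^{-\epsilon})$ with $r\epsilon<2a/\sigma$ controls $\E_\zs{\theta}X^{-r\epsilon}_\zs{t}$ only at \emph{deterministic} times, whereas at the random time you need $\E_\zs{\theta}\big(\inf_\zs{t\le \Lambda z}X_\zs{t}\big)^{-r\epsilon}$, a maximal inequality for negative moments. Doob does not apply ($X^{-\epsilon}$ is neither a sub- nor a supermartingale for the CIR dynamics), and the It\^o formula for $\ln X$ reintroduces the drift $b\,\t_\zs{z}=\OO(z)$, again yielding only $\OO(z^{r})$. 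Closing this step requires a genuinely new ingredient — for instance a scale-function hitting estimate of the type $\P_\zs{\theta}\big(\inf_\zs{k\le t\le k+1}X_\zs{t}\le \varepsilon\big)=\OO\big(\varepsilon^{2a/\sigma-1}\big)$, uniform on $\Theta$ since $2a/\sigma>1$ there, fed into the same blocking scheme. Finally, your treatment of $\{\t_\zs{z}>\Lambda z\}$ is circular as written: H\"older again calls for a moment of $X_\zs{\t_\zs{z}}$; it should be replaced by a dyadic decomposition over $\{\t_\zs{z}\in(2^{j}\Lambda z, 2^{j+1}\Lambda z]\}$ combined with the block maximal bounds and the arbitrarily-high-order polynomial tail of $\t_\zs{z}$ (available as in the proof of Proposition \ref{Pr.sec:Mult-time-1tt}).
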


\noindent
First of all, note that from the definitions \eqref{sec:MultPr-b-bnn-101} and \eqref{sec:MultPr-AsPrs-bb}  we can deduce directly that $\b_\zs{n}\le 1$ and $\b_\zs{*}\le 1$. Therefore, 
$$
\left| \b^{2}_\zs{n}-\b^{2}_\zs{*}\right|
\le 
2 
\left| \b_\zs{n}-\b_\zs{*}\right|
\le 
2 \left| \b_\zs{n}-\b_\zs{*}\right|\,\Chi_\zs{\{\lambda_\zs{min}(G_\zs{\t_\zs{n}})>0\}}
+
2\,\Chi_\zs{\{\lambda_\zs{min}(G_\zs{\t_\zs{n}})=0\}}
\,.
$$
\noindent Note here that on the set $\{\lambda_\zs{min}(G_\zs{\t_\zs{n}})>0\}$ the first difference can be estimated as
$$
\left| \b_\zs{n}-\b_\zs{*}\right|\,
\le \left| 
\D_\zs{n}
\right|
\quad\mbox{and}\quad
\D_\zs{n}=\frac{ G_\zs{\t_\zs{n}}}{\kappa_\zs{n}}-\frac{F}{\tr F}
\,.
$$
\noindent Moreover, note that for any $\theta\in\Theta$
$$
\P_\zs{\theta}\left(\lambda_\zs{min}(G_\zs{\t_\zs{n}})=0 \right)=
\P_\zs{\theta}\left(\lambda_\zs{min}\left(\frac{G_\zs{\t_\zs{n}}}{\kappa_\zs{n}}\right)=0 \right)
\le 
\P_\zs{\theta}\left(
\left| \D_\zs{n} \right|\ge 
\lambda_\zs{*} 
\right)\,,
$$
\noindent where $\lambda_\zs{*}=\min_\zs{\theta\in\Theta}\lambda_\zs{min}(F)/\tr F>0$.  
Using here the  Chebyshev inequality and the bound \eqref{sec:uppBnd-Dev-1} we can deduce  that for any $r>2$
$$
\sup_\zs{\theta\in\Theta}\,
\P_\zs{\theta}\left(\lambda_\zs{min}(G_\zs{\t_\zs{n}})=0 \right)
\le 
\frac{
	\sup_\zs{\theta\in\Theta}\,
	\E_\zs{\theta} \left| \D_\zs{n} \right|^{r}}{\lambda_\zs{*}^{r}}
\le \frac{\d_\zs{*}}{\lambda_\zs{*}^{r}} \kappa^{-r/2}_\zs{n}\,.
$$
\noindent 
Therefore,  for any $r>2$ the random variables $\psi_\zs{n}=\sqrt{\kappa_\zs{n}}(\b^{2}_\zs{n}-\b^{2}_\zs{*})$ 
can be estimated from above as
$$
\psi_\zs{*}=
\sup_\zs{n\ge 1}\,
\sup_\zs{\theta\in\Theta}\,
\E_\zs{\theta} |\psi_\zs{n}|^{r}
<\infty
$$

\noindent 
{and in view of
	the definition \eqref{sec:MultPr-b-nn-122} one can deduce} that for any $n>\u_\zs{*} H$ and $r>2$
\begin{align*}
	\P_\zs{\theta}\,
	\left(
	\upsilon_\zs{H}
	>n
	\right)&=
	\P_\zs{\theta}\,
	\left(
	\sum^{n}_\zs{k=1}\,\b^{2}_\zs{k}
	<H
	\right)
	\le 
	\P_\zs{\theta}\,
	\left(
	\sum^{n}_\zs{k=1}\,\frac{|\psi_\zs{k}|}{\sqrt{\kappa_\zs{k}}}
	> \b^{2}_\zs{*}n- H
	\right)
	\\[2mm]&
	\le 
	\frac{1}{(\b^{2}_\zs{*}n- H)^{r}}
	\,\E_\zs{\theta}\left(\sum^{n}_\zs{k=1}\,\frac{|\psi_\zs{k}|}{\sqrt{\kappa_\zs{k}}}\right)^{r}
	\,.
\end{align*}
\noindent  
{Through the  H\"older inequality the sum in the last expectation can be estimated as }
$$
\left(\sum^{n}_\zs{k=1}\,\frac{|\psi_\zs{k}|}{\sqrt{\kappa_\zs{k}}}\right)^{r}
\le \left(\sum^{n}_\zs{k=1}\,\frac{1}{\sqrt{\kappa_\zs{k}}}\right)^{r-1}
\left(\sum^{n}_\zs{k=1}\,\frac{|\psi_\zs{k}|^{r}}{\sqrt{\kappa_\zs{k}}}\right)\,.
$$
\noindent Therefore, for $n>\u_\zs{*} H$
$$
\sup_\zs{\theta\in\Theta}\,
\P_\zs{\theta}\,
\left(
\upsilon_\zs{H}
>n
\right)
\le 
\frac{\psi_\zs{*}}{(n- \u_\zs{*} H)^{r}}\,
\left(\sum^{n}_\zs{k=1}\,\frac{1}{\sqrt{\kappa_\zs{k}}}\right)^{r}\,.
$$
\noindent Using here the conditions \eqref{sec:seq-kappa}-\eqref {sec:seq-kappa--0}, we obtain
that		
$$
\sum^{n}_\zs{k=1}\,\frac{1}{\sqrt{\kappa_\zs{k}}}
\le \frac{\n^{*}_\zs{H}}{\sqrt{H}}
+
\sum^{n}_\zs{k=1}\,\frac{1}{\sqrt{\kappa^{*}_\zs{k}}}
\le 2\u_\zs{*} \sqrt{H}
+n^{\delta^{*}}
\sup_\zs{n\ge 1}\,n^{-\delta^{*}}
\sum^{n}_\zs{k=1}\,\frac{1}{\sqrt{\kappa^{*}_\zs{k}}}\,.
$$		
\noindent This implies
the upper bound  \eqref{sec:uppBnd-tail--0}. 
\endproof

\end{document}